\newcounter{comments}
\newenvironment{displaycomment}{\begin{list}{}{\rightmargin=1cm\leftmargin=1cm}\item\sf\begin{small}}{\end{small}\end{list}}
\newtheoremstyle{remark}
  {6pt}      
  {10pt}      
  {\normalfont} 
  {0pt}      
  {\bfseries}
  {.}        
  {5pt plus 1pt minus 1pt} 
  {}         
\newtheoremstyle{theorem}
  {0pt}      
  {-10pt}      
  {\itshape} 
  {0pt}      
  {\bfseries}
  {.}        
  {5pt plus 1pt minus 1pt} 
  {}         
\newcommand{\theoremhrule}{\noindent\rule{\linewidth}{0.5pt}}
\theoremstyle{remark}    
\newtheorem{remark}{Remark}[section]
\newtheorem{definition}[remark]{Definition}
\newtheorem{notation}[remark]{Notation}
\newtheorem{example}[remark]{Example}
\theoremstyle{theorem}  
\newtheorem{theoremInBox}[remark]{Theorem}
\newtheorem{observationInBox}[remark]{Observation}
\newtheorem{lemmaInBox}[remark]{Lemma}  
\newtheorem{propositionInBox}[remark]{Proposition}   
\newtheorem{corollaryInBox}[remark]{Corollary}   
\newtheorem{conjectureInBox}[remark]{Conjecture}
\newenvironment{theorem}[1][]{%
\needspace{5\baselineskip}
    \par\bigskip\theoremhrule\par\smallskip
    \begin{theoremInBox}[#1]}%
    {\end{theoremInBox}\par\smallskip\theoremhrule\par
\needspace{5\baselineskip}\bigskip}
\newenvironment{lemma}[1][]{%
\needspace{5\baselineskip}
    \par\bigskip\theoremhrule\par\smallskip
    \begin{lemmaInBox}[#1]}%
    {\end{lemmaInBox}\par\theoremhrule\par
\needspace{5\baselineskip}\bigskip}
\newenvironment{proposition}[1][]{%
    \par\bigskip\theoremhrule\par\smallskip
    \begin{propositionInBox}[#1]}%
    {\end{propositionInBox}\par\smallskip\theoremhrule\par
\needspace{5\baselineskip}\bigskip}
\newenvironment{corollary}[1][]{%
    \par\bigskip\theoremhrule\par\smallskip
    \begin{corollaryInBox}[#1]}%
    {\end{corollaryInBox}\par\smallskip\theoremhrule\par
\needspace{5\baselineskip}\bigskip}
\numberwithin{equation}{subsection}
\newcommand{\cB}{\mathcal{B}}
\newcommand{\cH}{\mathcal{H}}
\newcommand{\cW}{\mathcal{W}}
\newcommand{\cX}{\mathcal{X}}
\newcommand{\cY}{\mathcal{Y}}
\newcommand{\cZ}{\mathcal{Z}}
\newcommand{\sA}{\mathscr{A}}
\newcommand{\sB}{\mathscr{B}}
\newcommand{\sJ}{\mathscr{J}}
\newcommand{\sK}{\mathscr{K}}
\newcommand{\sL}{\mathscr{L}}
\newcommand{\R}{\mathbb{R}}
\newcommand{\N}{\mathbb{N}}
\newcommand{\Z}{\mathbb{Z}}
\newcommand{\C}{\mathbb{C}}
\DeclareMathOperator{\supp}{\operatorname{supp}}
\DeclareMathOperator{\ind}{\operatorname{ind}}
\DeclareMathOperator{\ch}{\operatorname{ch}}
\DeclareMathOperator{\Str}{\operatorname{Str}}
\DeclareMathOperator{\sgn}{\operatorname{sgn}}
\DeclareMathOperator*{\colim}{\mathrm{colim}}
\newcommand{\tr}{\mathrm{tr}}
\newcommand{\Tr}{\mathrm{Tr}}
\newcommand{\GL}{\mathrm{GL}}
\title{Large-scale quantization of trace I:
\\
Finite propagation operators}
\author[1]{Matthias Ludewig}
\author[2]{Guo Chuan Thiang}
\affil[1]{University of Greifswald, Germany}
\affil[2]{Beijing International Center for Mathematical Research, Peking University, China}
\begin{document}

\maketitle

\begin{abstract}
Inspired by parallel developments in coarse geometry in mathematics and exact macroscopic quantization in physics, we present a family of general trace formulas which are universally quantized and depend only on large-scale geometric features of the input data. 
They generalize, to arbitrary dimensions, formulas found by Roe in his partitioned manifold index theorem, as well as the Kubo and Kitaev formulas for 2D Hall conductance used in physics. 
\end{abstract}

\section{Introduction}

J.~Roe's partitioned manifold index theorem \cite{Roe-partition} considers an odd-dimensional noncompact Riemannian manifold $M=M_0\cup M_1$ with compact partitioning hypersurface $N=M_0\cap M_1=\partial M_0$, and proves that a certain trace formula for a Dirac-type operator $D$ on $M$ is equal to the (integer-valued) Fredholm index of the $N$-restricted version of $D$. 
The input for the trace formula is a unitary operator representing the \emph{coarse}, or \emph{large-scale}, index of $D$. 

Independently, in physics, the Hall conductance of a \emph{macroscopic} two-dimensional sample $M$ subject to a uniform perpendicular magnetic field was experimentally found to be exactly quantized to integer multiples of some universal physical constants. 
A further astonishing feature of this quantization is its insensitivity to small-scale perturbations. 
The \emph{Kubo trace formula} 
\begin{equation}
\label{KuboIntron=2}
2 \pi i \cdot \Tr\big(P \big[[X, P], [Y, P]\big]\big) \in \Z
\end{equation}
is usually used for this Hall conductance, and it takes as input a spectral projection of the magnetic Schr\"{o}dinger operator on $\R^2$, and $X, Y \subseteq \R^2$ are the right, respectively upper half plane (identified with the operator that multiplies by their indicator functions).
At first glance, one would probably not expect that the Kubo formula \eqref{KuboIntron=2} takes on only integer values. 
Indeed, much effort has been devoted to showing this surprising fact by relating it to Fredholm indices, see, e.g., \cite{ASS2} and \cite{BES} for the noncommutative geometry perspective. 

In \cite[\S C.1]{Kitaev}, A.~Kitaev proposed an interesting alternative trace formula for the Hall conductance. 
He considered a partition of $M=\Z^2$ into three sectors $A$, $B$ and $C$ (see Fig.~\ref{fig:SmallLargeTriangles}), and stated the formula
\begin{equation}
\label{KitaevIntron=2}
12\pi i\cdot \Tr\big(APBPCP - APCPBP\big)  \in \Z,
\end{equation}
arguing that it was invariant under certain modifications of the partition, and that it equals the Kubo formula and should therefore be integral as well.
Again, in formula \eqref{KitaevIntron=2}, $A$ denotes the operator that multiplies by the indicator function of the subset $A$ and similarly for $B$ and $C$.

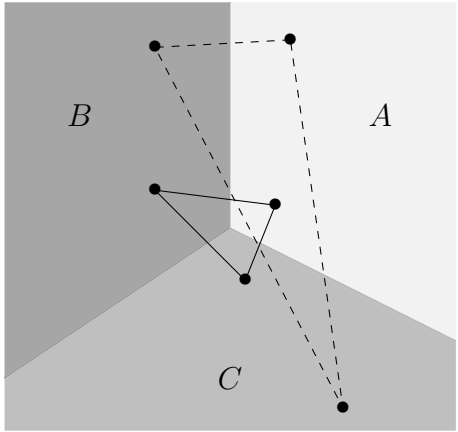
\begin{SCfigure}
\begin{tikzpicture}
\draw (-3,-2)--(0,0);
\draw (0,0)--(3,-1.5);
\draw (0,0)--(0,3);
\fill[color=gray!70] (-3,-2) -- (-3,3) -- (0, 3) -- (0, 0) ;
\fill[color=lightgray] (-3,-2) -- (-3,-2.7) -- (3, -2.7) -- (3,-1.5) -- (0, 0) ;
\fill[color=gray!10] (0, 0) -- (0, 3) -- (3, 3) -- (3, -1.5);
\node at (2,1.5) {$A$};
\node at (-2,1.5) {$B$};
\node at (0,-2) {$C$};
\draw (0.6,0.3) -- (-1,0.5) -- (0.2,-0.7) -- cycle;
\draw[dashed] (0.8,2.5) -- (-1,2.4) -- (1.5,-2.4) -- cycle;
\node at (0.6,0.3) {$\bullet$};
\node at (-1,0.5) {$\bullet$};
\node at (0.2,-0.7) {$\bullet$};
\node at (0.8,2.5) {$\bullet$};
\node at (-1,2.4) {$\bullet$};
\node at (1.5,-2.4) {$\bullet$};
\end{tikzpicture}
\caption{The black dots represent a discrete metric subspace $M$ of the plane, and $M=A\sqcup B\sqcup C$ is a coarsely transverse 2-partition.
Each oriented triangle with one vertex in each of $A, B, C$ represents a signed contribution to the Kitaev pairing \eqref{KitaevIntron=2}. 
 Due to the finite propagation of $P$, large triangles (dashed) with edges longer than the propagation bound do not contribute.
}  \label{fig:SmallLargeTriangles}
\end{SCfigure}

\paragraph{Main results.}

In this paper, we use methods from coarse geometry to analyze the Kubo formula and to make Kitaev's formula precise in a much more general setting. 
We work on arbitrary proper metric spaces $M$, allow collections $X_1, \dots, X_n$ of multiple half-spaces in $M$ for arbitrary $n$, and define generalizations of the above Kubo pairing formula under a certain \emph{coarse transversality} assumption on the half-space collection (see Definition~\ref{dfn:partition} below).
Our main result is then the \emph{quantization} of the \emph{generalized Kubo pairing}
\begin{equation}
\label{KuboPairingIntro}
\frac{(2 \pi i)^{n/2}}{(n/2)!} \cdot \Tr\left(\sum_{\sigma \in S_n} \sgn(\sigma) P X_{\sigma_1} P \cdots P X_{\sigma_n} P\right) \in \Z,
\end{equation}
where $P$ is an arbitrary finite propagation idempotent satisfying a local trace-class condition.
Similarly, for a coarsely transverse collection $A_0, \dots, A_n$ which partitions the space $M$, we have the \emph{quantized generalized Kitaev pairing}
\begin{equation}
\label{KitaevPairingIntro}
\frac{(2 \pi i)^{n/2}n!}{(n/2)!} \cdot \Tr\left(\sum_{\sigma \in S_{n+1}} \sgn(\sigma) A_{\sigma_0} P A_{\sigma_1} P \cdots P A_{\sigma_n} P\right) \in \Z.
\end{equation}
Notice the extra $n!$ factor in \eqref{KitaevPairingIntro} compared to \eqref{KuboPairingIntro}. 
After accounting for cyclicity of the trace,
this corresponds to the extra factor of 2 in Kitaev's formula \eqref{KitaevIntron=2} compared to the $n=2$ Kubo formula \eqref{KuboIntron=2}. 
This $n!$ factor is due to an equipartition principle relating the Kubo and Kitaev pairings (Corollary \ref{CorEquipartition}).
Our technique is, moreover, rather insensitive to the nature of the underlying space, allowing to prove this result for operators over arbitrary proper metric spaces $M$, not just over $M=\R^n$ or $M=\Z^n$.
Whether the set of values of \eqref{KuboPairingIntro}, respectively  \eqref{KitaevPairingIntro}, is all of $\Z$ generally depends on both $M$ and the partition, respectively collection of half-spaces. 
Our result is optimal in the sense that in both cases, any integer can occur as a value of \eqref{KuboPairingIntro} and  \eqref{KitaevPairingIntro} in the case of the standard partition/half-space collection in even-dimensional Euclidean space.

The trace pairings \eqref{KuboPairingIntro} and \eqref{KitaevPairingIntro} can only be non-trivial if $n$ is even.
If $n$ is odd, there are similar pairings, but involving unitary operators $U$ instead of idempotents $P$.
Our formulas unify Roe's formula $(n=1)$, the Kubo formula \eqref{KuboIntron=2} and Kitaev's formula \eqref{KitaevIntron=2} (both $n=2$) into a general framework applicable for all $n$ and all $M$, and are not limited to indices of Dirac-type operators. 

\paragraph{Method of proof.}

Our method of proof will show that the values of these traces depend only on the large-scale aspects of the input data.
More precisely, any coarsely transverse collection of half-spaces and, similarly, any coarsely transverse partition determines a \emph{coarse cohomology class}.
On the other hand, the idempotent $P$ determines a class in the $K$-theory group of an operator algebra of finite propagation operators, which through a suitable Chern character map provides a coarse homology class.
An immediate consequence of this interpretation, summarized in Lemma~\ref{LemmaPartitionInTermsOfChernCharacters}, is that the formulas depend only on the $K$-theory class of $P$ and the coarse cohomology class of the collection of half-spaces, respectively the partition.
However, more profoundly, this  homological interpretation of \eqref{KuboPairingIntro} and \eqref{KitaevPairingIntro}  makes them amenable to the powerful techniques of homological algebra and homotopy theory, which we use to prove the above quantization results.
While an abstract functional analytic proof is possible in the case of $n=2$, it seems hard to conceive how to prove the case of general $n$ in a purely analytic way.

The coarse geometric interpretation of \eqref{KuboPairingIntro} and \eqref{KitaevPairingIntro} is cleanest when taking $P$ to have finite propagation, as carried out in this paper.
However, to handle the spectral projections that arise in physics applications, we should allow for idempotents with the weaker condition of approximate finite propagation. 
Extra conditions would be needed for the local trace norms of such idempotents, as well as the partitions, to ensure that the trace formulas are well-defined. 
The precise modifications needed to handle this situation will be detailed in a separate paper \cite{LudewigThiangPoly}. 
We emphasize that neither of the two situations is contained in the other, and it is of independent mathematical interest to study how traces can get quantized in the strictly finite propagation case.

\paragraph{History.}

The consideration of total anti-symmetrizations of operators as in  \eqref{KitaevPairingIntro} has a long history, going back at least to the 1970s (see, e.g., the work \cite{HeltonHowe1973} of Helton and Howe).
Similarly, the formula \eqref{KuboPairingIntro} may be rewritten in a form closely resembling the formula of Connes for the Chern character of $p$-summable Fredholm modules \cite{Connes}.
However,  all references that we are aware of consider a situation with  a chain of ideals that is \emph{linearly ordered} (such as the collection of Schatten ideals within the algebra of compact operators). 
In contrast, the iterated products considered in our paper are contained in ideals whose inclusion order is governed by the structure of the underlying geometric space $M$ and is \emph{not} linear.
We are not aware of any previous results in such a scenario.
The (non-)relation to the theory of Connes is discussed in more detail in Section \ref{ConnesTheory}.

In recent years, coarse geometry, coarse (co)homology, and Roe algebras have become important mathematical tools in physics; see \cite{EwertMeyer, KS2, Kubota, LTHyperbolic, LTWannier} for some direct applications, and \cite{Kitaev, GNVW} where such ideas were implicitly used. 
The quantized trace formulas presented in this paper add another contribution to this growing list.

\paragraph{Structure of the paper.}

While this paper has ended up being rather lengthy, the conceptual core of the argument is rather short.
Section~\ref{KuboKitaevPairings} introduces the (new) notion of coarse transversality and contains the precise definition of the Kitaev and Kubo pairings.
Sections \ref{SectionCoarseCohomology} \& \ref{Section4} elaborate on the abstract toolbox that we employ to prove our results: Coarse cohomology, algebraic $K$-theory and the Chern character.
Readers familiar with these notions (or those courageous enough to delve right in) may want to jump directly to Section~\ref{ProofQuantization}, which contains the actual arguments, and consult earlier sections for details of the toolbox when the need arises. 

In Sections~\ref{SectionHalfSpaceClasses} \&  \ref{SectionPartitionClasses}, we explain how partitions and collections of half-spaces give rise to coarse cohomology classes.
In Section~\ref{SectionIntegralityKubo}, we use a  dimension reduction argument to prove the integrality result for the Kubo pairing.
The integrality result for the Kitaev pairing is derived from the Kubo result by combining our ``equipartition principle'' from Section~\ref{SectionPartitionClasses} with a ``classifying map'' argument from Section~\ref{SectionIntegralityKitaev}.
In the final Section~\ref{SectionDualIndexTheorem}, we give an index theoretic interpretation of the Kubo pairing and show its non-triviality in the case of $M=\R^n$.

\paragraph{Acknowledgements.}

The authors would like to thank Ulrich Bunke, Alexander Engel and Ralf Meyer for helpful conversations.
M.\ L.\ thanks SFB 1085 ``Higher invariants'' for support.

\tableofcontents

\section{Kubo and Kitaev trace pairings}
\label{KuboKitaevPairings}

In this paper, we work with \emph{proper} metric spaces $M$, meaning that bounded sets are precompact.
The distance function will be denoted by $d$.
Throughout, we will use the following notations.

\begin{notation}[Indicator functions]
\label{IndicatorFunction}
For a subset $Y\subseteq M$, we use the same letter $Y$ to denote its indicator function $Y : M \to \{0, 1\}$, taking the value $1$ on $Y$ and $0$ on its complement.
So the complement $Y^c=M\setminus Y$ corresponds to the function $Y^c = \mathbf{1} - Y$, where $\mathbf{1}$ denotes the constant function with value $1$ on all of $M$.
\end{notation}

\begin{notation}[Thickenings]
The \emph{$R$-thickening} of a subset $Y\subset M$ is the (closed) set of points lying within distance $R$ from $Y$ and will be denoted by
\[
Y_R:=\{x\in M\,:\,d(x,Y)\leq R\}.
\]
\end{notation}

We note that if $X, Y \subseteq M$, then
\begin{equation}
\label{UnionIntersectionInclusions}
(X \cup Y)_R = X_R\cup Y_R,\qquad \text{and} \qquad (X \cap Y)_R \subseteq X_R\cap Y_R.
\end{equation}

\subsection{Coarsely transverse partitions and half-spaces}

Let $M$ be a proper metric space.

\begin{definition}[Big families]
A \emph{big family} $\mathcal{Y}$ in $M$ is a collection of subsets of $M$ that is closed under taking subsets, finite unions, and $R$-thickenings for any $R>0$.
\end{definition}

\begin{example}
The smallest non-empty big family in $M$ is the collection $\cB$ of all bounded subsets of $M$.
\end{example}

\begin{example}
Any subset $Y\subseteq M$ generates a big family consisting of those sets that are contained in some thickening of $Y$. This big family is denoted
\begin{equation*}
\{Y\} := \{Z \subseteq M \mid \exists R > 0 : Z \subseteq Y_R\}.
\end{equation*}
For example, $\{M\}$ is the collection of all subsets of $M$, and this is the largest big family in $M$.
\end{example}

Given two big families $\cX, \cY$ in $M$, the elementwise intersection and union
\begin{align*}
\mathcal{X} \Cap \mathcal{Y} &:= \big\{ X \cap Y \mid X \in \mathcal{X}, Y \in \mathcal{Y}\big\},
\\
\mathcal{X} \Cup \mathcal{Y} &:= \big\{ X \cup Y \mid X \in \mathcal{X}, Y \in \mathcal{Y}\big\},
\end{align*}
are again big families, as follows directly from \eqref{UnionIntersectionInclusions}.
In particular,  we will often consider the elementwise intersection
\[
\partial X:=\{X\}\Cap\{X^c\},
\]
for subsets $X \subseteq M$, which we call the \emph{coarse boundary} of $X$.
This is not to be confused with the topological boundary of $X$ inside $M$, which will not play a role in this paper.

\begin{definition}[Coarse transversality]
A collection $Y_1,\ldots,Y_n\subseteq M$ of subsets of $M$ is \emph{coarsely transverse} if 
\[
\{Y_1\}\Cap\dots\Cap\{Y_n\}\quad \text{is the big family $\cB$ of bounded subsets.}
\]
Equivalently, the set $(Y_1)_R\cap\dots\cap (Y_n)_R$ is bounded for each $R \geq 0$.
\end{definition}

\begin{definition}[Coarsely transverse partitions] 
\label{dfn:partition}
A \emph{coarsely transverse $n$-partition} of $M$ is an ordered collection $A_0, \dots, A_n \subseteq M$ of pairwise disjoint Borel subsets whose union is $M$ and which are coarsely transverse.
\end{definition}

Coarsely transverse 1-partitions of a manifold were studied by J.\ Roe in \cite{Roe-partition} in the context of his partition index theorem.
Some 2-partitions are illustrated in Fig.\ \ref{fig:coarse.transverse} below.

\begin{definition}[Coarsely transverse half-spaces]
\label{dfn:transverse.half.spaces}
A \emph{coarsely transverse collection of half-spaces} is an ordered collection of Borel subsets $X_1,\ldots,X_n \subseteq M$ such that the subsets $X_1, X_1^c, \ldots, X_n, X_n^c$ are coarsely transverse. 
Equivalently, 
\[
\partial X_1 \Cap \cdots \Cap \partial X_n \quad \text{is the big family $\cB$ of bounded sets.}
\]
More generally, a coarsely transverse collection of \emph{switch functions} is a collection of Borel functions $\chi_1, \dots, \chi_n :M\to\C$ such that the sets $\supp(\chi_i)$, $\supp(\mathbf{1} - \chi_i)$, $i=1, \dots, n$, are coarsely transverse.
\end{definition}

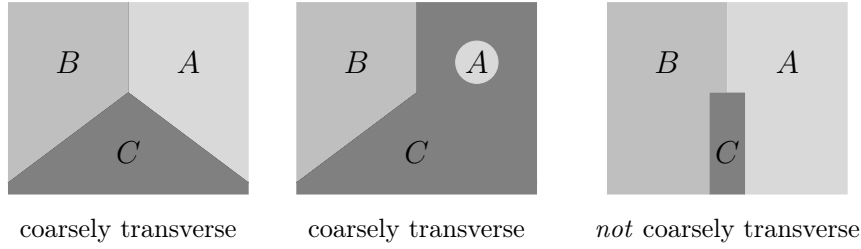
\begin{figure}
\begin{center}
\begin{tikzpicture}[scale=0.8]
\draw (0,0) -- (0,1.5);
\draw (0,0) -- (2,-1.5);
\draw (0,0) -- (-2,-1.5);
\fill[color=gray!30] (0,0) -- (2,-1.5) -- (2,1.5) -- (0,1.5);
\fill[color=lightgray] (0,0) -- (-2,-1.5) -- (-2,1.5) -- (0,1.5);
\fill[color=gray] (0,0) -- (-2,-1.5) -- (-2,-1.7) -- (2,-1.7) -- (2,-1.5);
\node at (1,0.5) {$A$};
\node at (-1,0.5) {$B$};
\node at (0,-1) {$C$};
\node at (0,-2.3) {\footnotesize coarsely transverse};
\end{tikzpicture}
~\hspace{0.5em}
\begin{tikzpicture}[scale=0.8]
\draw (0,0) -- (0,1.5);
\draw (0,0) -- (2,-1.5);
\draw (0,0) -- (-2,-1.5);
\fill[color=lightgray] (0,0) -- (-2,-1.5) -- (-2,1.5) -- (0,1.5);
\fill[color=gray] (0,0) -- (-2,-1.5) -- (-2,-1.7) -- (2,-1.7) -- (2,1.5) -- (0,1.5) -- (0,0);
\filldraw[color=gray!30] (1,0.5) circle (10pt);
\node at (1,0.5) {$A$};
\node at (-1,0.5) {$B$};
\node at (0,-1) {$C$};
\node at (0,-2.3) {\footnotesize coarsely transverse};
\end{tikzpicture}
~\hspace{0.5em}
\begin{tikzpicture}[scale=0.8]
\fill[color=gray] (0,0) -- (0.3,0) -- (0.3,-1.7) -- (-0.3,-1.7) -- (-0.3,0) -- (0,0);
\fill[color=lightgray] (0,0) -- (-0.3,0) -- (-0.3,-1.7) -- (-2,-1.7) -- (-2,1.5) -- (0,1.5) -- (0,0);
\fill[color=gray!30] (0,0) -- (0.3,0) -- (0.3,-1.7) -- (2,-1.7) -- (2,1.5) -- (0,1.5) -- (0,0);
\node at (1,0.5) {$A$};
\node at (-1,0.5) {$B$};
\node at (0,-1) {$C$};
\node at (0,-2.3) {\footnotesize \emph{not} coarsely transverse};
\end{tikzpicture}
\end{center}
\caption{The first diagram shows a coarsely transverse 2-partition $A,B,C$ of the plane. 
For any $R>0$, the thickenings $A_R, B_R, C_R$ will intersect within a finite-radius ball centred at the triple intersection point. The second diagram also shows a coarsely transverse 2-partition, but $B$ and $C$ are redundant --- the bounded set $A$ alone guarantees the coarse transversality condition. The third diagram shows a 2-partition which is not coarsely transverse --- for large enough $R$, the thickenings $A_R,B_R,C_R$ have unbounded triple intersection.}\label{fig:coarse.transverse}
\end{figure}

\begin{example}
\label{ExampleEuclideanHalfSpaces}
The \emph{standard} half-spaces on $M=\R^n$ are
\begin{equation*}
	X_i^{\mathrm{std}}=\{(x_1,\ldots,x_n):x_i\geq 0\},\qquad i=1,\ldots,n,
\end{equation*}
and it is obvious that they are coarsely transverse half-spaces in $\R^n$.
To obtain a coarsely transverse partition of $\R^n$, we may set (see Fig.~\ref{fig:standard.partition} below for the $n=2$ case)
\begin{equation}
\begin{aligned}
A_0&=X_1^{\mathrm{std}} \cap \cdots \cap X_n^{\mathrm{std}},\\
A_i&=(X_i^{\mathrm{std}})^c \cap  X_{i+1}^{\mathrm{std}} \cap \cdots  \cap X_n^{\mathrm{std}},\qquad i=1,\ldots,n.
\end{aligned}\label{EuclideanStandardPartition}
\end{equation}
Explicitly,
\[
\begin{aligned}
A_0 &= \left\{(y_1,\ldots,y_n)\in\R^n  \mid  y_{1},\ldots,y_n\geq 0\right\}
\\
A_i &= \left\{(y_1,\ldots,y_n)\in\R^n\mid y_i < 0,\;y_{i+1},\ldots,y_n\geq 0\right\},\qquad i=1,\ldots,n.
\end{aligned}
\]
\end{example}

\subsection{Finite propagation operators}\label{SecFinitePropOperators}

Let $\mathcal{H}$ be an \emph{$M$-module}, by which we mean a separable Hilbert space with a $*$-representation of the algebra $C_0(M)$ of continuous functions vanishing at infinity on $M$.
For example, if $M$ is a smooth Riemannian manifold, then $\cH=L^2(M)$ is a $C_0(M)$-module, while if $M$ is a discrete metric space, one usually takes $\cH = \ell^2(M)$.

By the spectral theorem, the $C_0(M)$ action on $\cH$ extends canonically to a $*$-representation of the algebra of bounded Borel functions on $M$ in the algebra $\mathscr{L}(\mathcal{H})$ of bounded operators on $\mathcal{H}$.
For any bounded Borel function $f$ on $M$, we denote the operator on $\mathcal{H}$ that multiplies with $f$ (using the module action) by the same letter.
Moreover, for any Borel subset $X \subseteq M$, we continue to denote its indicator function and, consequently, the corresponding multiplication operator by the same letter.

\begin{definition}[Finite propagation, locally trace-class operators]
\label{DefLTCFP}
Let $\cH$ be an $M$-module and let $T \in \sL(\cH)$.
\begin{enumerate}[(i)]
\item 
$T$ has \emph{propagation bound} $R$, if whenever the supports of two bounded Borel functions $f$ and $g$ have distance at least $R$, then $f T g =0$;
\item 
$T$ has \emph{finite propagation}, if it admits some finite propagation bound;
\item 
$T$ is \emph{locally trace-class}, if $fT$ and $Tf$ are trace-class operators for any Borel function $f$ with bounded support.
\end{enumerate}

 We denote by $\sB(M) \subseteq \sL(\cH)$ the subalgebra of locally trace-class, finite propagation operators. 

\begin{enumerate}
	\item[(iv)]
	If $Y \subseteq M$, we say that an operator $T\in\sL(\mathcal{H})$ is \emph{supported on} $Y$ if $fT = Tf = 0$ whenever $f$ is a bounded Borel function with $\supp(f) \subseteq Y^c$;
	\item[(v)]
	If $\cY$ is a big family in $M$, we say that $T$ is \emph{supported on} $\cY$ if it is supported on some member of $\cY$.
\end{enumerate}

We denote by $\sB(\mathcal{Y}) \subseteq \sB(M) \subset \sL(\cH)$ the subalgebra consisting of those locally trace-class and finite propagation operators which are supported on $\mathcal{Y}$. It will follow from Lemmas \ref{LemmaTSsupported} and \ref{LemmaLocallyTraceclass} below that the subalgebras $\sB(\cY)$ are ideals in $\sB(M)$, so we refer to them as \emph{localization ideals}.
\end{definition}

\begin{remark}\label{RemAmpleModule}
We may write $\sB_{\cH}(\cY)$ to emphasize the dependence on the choice of $M$-module, but will usually suppress $\cH$ in notation. 
This simplified notation has the justification that if $\cH$ and $\cH'$ are \emph{ample}---meaning that no non-zero $f \in C_0(M)$ acts as a compact operator---then $\sB_{\cH}(\cY) \cong \sB_{\cH'}(\cY)$ for any big family $\cY$ in $M$, where the isomorphism is provided by conjugation by a finite propagation unitary $\cH \to \cH'$.
In particular, the algebras $\sB_{\cH}(\cY)$ have canonically isomorphic $K$-theory.
Similar remarks hold for the norm closures $C^*_{\cH}(\cY)$ of $\sB_{\cH}(\cY)$, which are called the \emph{Roe algebras} associated to the big families $\cY$. 
For a reference on these facts, see \cite[\S6.3]{HigsonRoe}.
\end{remark}

\begin{lemma}
\label{LemmaTSsupported}
If $S, T \in \sL(\cH)$ are such that $T$ has finite propagation and $S$ is supported on $Y \subseteq M$, then $TS$ and $ST$ are supported on $\{Y\}$.
\end{lemma}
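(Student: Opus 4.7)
The plan is to show that both $TS$ and $ST$ are supported on the specific thickening $Y_R$, where $R$ is any finite propagation bound of $T$. Since $Y_R \in \{Y\}$ by definition of the big family generated by $Y$, this gives both containments at once.

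I will treat $TS$; the argument for $ST$ is symmetric. So let $f$ be a bounded Borel function with $\supp(f) \subseteq Y_R^c$, and show $f(TS) = 0$ and $(TS)f = 0$.

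The easy half is $(TS)f = T(Sf)$: since $Y_R \supseteq Y$, we have $\supp(f) \subseteq Y_R^c \subseteq Y^c$, so the hypothesis that $S$ is supported on $Y$ gives $Sf = 0$ directly. For the other half, the idea is to insert the partition of unity $\mathbf{1} = Y + Y^c$ (viewed as indicator functions and thus as bounded Borel functions on $M$):
\[
f\,TS \;=\; f\,T\,(Y + Y^c)\,S \;=\; f\,T\,Y\,S \;+\; f\,T\,Y^c\,S.
\]
The second term vanishes because $Y^c S = 0$ (again using that $S$ is supported on $Y$, applied to the Borel function $Y^c$). For the first term, the supports of $f$ and of $Y$ satisfy $d(\supp(f), \supp(Y)) = d(\supp(f), Y) > R$, by the very definition of the thickening $Y_R$ and the assumption $\supp(f) \subseteq Y_R^c$. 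The propagation bound of $T$ then forces $fTY = 0$, so $fTYS = 0$ as well.

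The only potential subtlety is the book-keeping with the $*$-representation: one must use that indicator functions of Borel sets are genuine bounded Borel functions on $M$, so that $Y$ and $Y^c$ are admissible test functions for both the "supported on" condition and the propagation bound. Beyond that verification, the proof is a direct combination of the two hypotheses; there is no serious obstacle.
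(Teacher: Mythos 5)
Your proof is correct and takes essentially the same route as the paper's: show support on the single thickening $Y_R$ (with $R$ a propagation bound for $T$), dispose of two of the four conditions via $Sf = 0 = fS$, and handle the remaining two by bringing $Y$ next to $T$ so that the propagation bound applies. The only cosmetic difference is that you insert $\mathbf{1} = Y + Y^c$ and kill the $Y^c$-term separately, whereas the paper substitutes $S = YS = SY$ directly; these are the same manipulation (the paper's printed proof contains a few $S \leftrightarrow T$ typos, but the intended argument matches yours).
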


\begin{proof}
Let $R$ be a propagation bound for $T$.
Let $f$ be a bounded Borel function with $\supp(f) \subseteq (Y_R)^c \subseteq Y^c$.
Then clearly $fST = TSf = 0$.
For the other products, observe that $S = SY = YS$ because $S(1-Y) = (1-Y)S = 0$ from the fact that $S$ is supported on $Y$.
Hence $STf = SYTf = 0$ and similarly $fTS = fTYS = 0$ because $d(\supp(f), Y) \geq R$, the propagation bound of $T$.
\end{proof}

\begin{lemma}
\label{LemmaLocallyTraceclass}
If $S, T \in \sL(\cH)$ are such that $T$ has finite propagation and $S$ is locally trace-class, then also $ST$ and $TS$ are locally trace-class.
\end{lemma}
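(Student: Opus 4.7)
The plan is to verify the four trace-class claims — $f(ST)$, $(ST)f$, $f(TS)$, $(TS)f$ for arbitrary compactly supported Borel $f$ — by using, in each case, the ideal property of trace-class operators in $\sL(\cH)$ (product of trace-class with bounded is trace-class). Two of the four are immediate: $f(ST) = (fS)T$ is a trace-class operator times $T$, and $(TS)f = T(Sf)$ is $T$ times a trace-class operator, both trace-class because $S$ is locally trace-class.

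The other two, $(ST)f = S(Tf)$ and $f(TS) = (fT)S$, are less obvious because $Tf$ and $fT$ need not be trace-class. Here is where the finite propagation of $T$ enters, together with the \emph{properness} of $M$. Let $R$ be a propagation bound for $T$. Since $M$ is proper, the closed thickening $\supp(f)_R$ of the compact support of $f$ is compact, so we may choose a bounded compactly supported Borel function $g$ on $M$ with $g \equiv 1$ on $\supp(f)_R$.

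With this $g$, the defining property of propagation gives $(1-g) T f = 0$ and $f T (1-g) = 0$, since $\supp(1-g)$ is at distance at least $R$ from $\supp(f)$ (in both cases one applies the propagation bound to the pair of functions $1-g$ and $f$). Thus $Tf = gTf$ and $fT = fTg$, and we rewrite
\[
(ST)f \;=\; S(gTf) \;=\; (Sg)(Tf), \qquad f(TS) \;=\; (fTg)S \;=\; (fT)(gS).
\]
Since $S$ is locally trace-class and $g$ is a compactly supported bounded Borel function, both $Sg$ and $gS$ are trace-class. Multiplying on the remaining side by the bounded operators $Tf$, respectively $fT$, keeps them trace-class.

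I do not expect a real obstacle — the only subtle point is the cutoff argument, and the main conceptual ingredient is that properness of $M$ allows one to compactly dominate the thickening $\supp(f)_R$. The proof goes through verbatim on any $M$-module $\cH$, with no need to invoke ampleness or any further hypothesis on $\cH$.
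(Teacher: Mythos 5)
Your proof is correct and follows essentially the same approach as the paper's: two of the four products are handled trivially from the ideal property of trace-class operators, and for the other two you exploit the propagation bound to cut off the finite-propagation operator by a compactly supported function near $\supp(f)_R$ (here you use properness to ensure that thickening is compact). The paper packages this cut-off step as an invocation of Lemma~\ref{LemmaTSsupported} and inserts the indicator of $B_R$ directly, whereas you reprove the relevant support fact inline via a cutoff $g$; the content is the same.
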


\begin{proof}
Let $f$ be a bounded Borel function supported in a bounded subset $B$. 
Then clearly $fST$ and $TSf$ are trace-class.
By Lemma~\ref{LemmaTSsupported}, $Tf$ and $fT$ are supported on $B_R$ for some $R\geq 0$ (one can choose $R$ to be a propagation bound for $T$).
Hence $STf = SB_RTf$ and $fTS = fTB_RS$ are trace-class, as $SB_R$ and $B_RS$ are trace-class.
\end{proof}

\begin{lemma}
\label{LemmaTraceClass}
The algebra $\sB(\cB)$ consists of trace-class operators.	
\end{lemma}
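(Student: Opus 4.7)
The plan is to unpack the definitions of $\sB(\cB)$ directly. An operator $T \in \sB(\cB)$ is, by definition, locally trace-class, of finite propagation, and supported on some bounded subset $B \subseteq M$. Since $M$ is proper, the closure $\overline{B}$ is compact. To exploit the support condition cleanly, I would pass from $B$ to $\overline{B}$: because $\overline{B}$ is closed, its complement $\overline{B}^c$ is open and hence Borel, and moreover $\overline{B}^c \subseteq B^c$, so $\mathbf{1}_{\overline{B}^c}$ is a bounded Borel function whose support lies in $B^c$.

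Applying the support condition in Definition~\ref{DefLTCFP}(iv) with $f = \mathbf{1}_{\overline{B}^c}$ yields $\mathbf{1}_{\overline{B}^c}\, T = 0$. Writing $\mathbf{1} = \mathbf{1}_{\overline{B}} + \mathbf{1}_{\overline{B}^c}$, this gives
\[
T \;=\; \mathbf{1}_{\overline{B}}\, T.
\]
Now $\mathbf{1}_{\overline{B}}$ is a bounded Borel function with compact support $\overline{B}$, so the locally trace-class hypothesis (Definition~\ref{DefLTCFP}(iii)) applied to $T$ shows that $\mathbf{1}_{\overline{B}}\, T$ is a trace-class operator. Hence $T$ itself is trace-class.

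There is essentially no difficult step here; the only subtlety worth calling out is that a ``bounded subset'' in the definition of $\cB$ need not be Borel or closed, whereas the support condition and the local trace-class condition are both formulated in terms of Borel functions. Passing to the closure, and using properness of $M$ to guarantee that this closure is compact, resolves the issue and lets the definitions combine immediately. Note that the finite propagation hypothesis is not needed for this particular lemma, which is consistent with the stated conclusion.
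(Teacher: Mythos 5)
Your proof is correct and follows essentially the same route as the paper: decompose $T = fT + (1-f)T$ with $f$ a compactly supported indicator of a compact set containing the support, kill the second term by the support condition, and invoke local trace-class on the first. You are slightly more explicit than the paper in passing from the bounded set $B$ to its closure $\overline{B}$ via properness (the paper silently assumes $B$ compact), which is a worthwhile clarification but not a different argument.
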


\begin{proof}
Suppose that $T$ is a locally trace-class operator that is supported on a bounded subset $B \subseteq M$. 
Then we may choose a Borel function $f$ with bounded support and $f_{|B} \equiv 1$.
Then $T = Tf + T(1-f)$ and because $\supp(1-f) \subseteq B^c$ while $T$ is supported on $B$, we get that $T(1-f) =0$.
So $T = Tf$, which is trace-class.
\end{proof}

\begin{lemma}
\label{LemmaLocalizationIdealProperty}
Let $Y_1, \dots, Y_n \subseteq M$ be coarsely transverse and let $T_1, \dots, T_n$ be finite propagation operators, at least one of which is locally trace-class.
Then the product 
\[
Y_1T_1 \cdots Y_n T_n \quad \text{is trace-class.}
\]
\end{lemma}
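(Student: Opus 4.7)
The plan is to use finite propagation together with coarse transversality to factor $F := Y_1 T_1 \cdots Y_n T_n$ through a compactly supported indicator adjacent to the locally trace-class factor. Let $T_k$ be a locally trace-class factor. I aim to produce bounded operators $L, M$ and a bounded Borel subset $B$ of $M$ such that
\[
F = L \cdot B T_k \cdot M.
\]
Then $B T_k$ is trace-class by Definition~\ref{DefLTCFP}(iii) applied to the indicator function of $B$, and the two-sided ideal property of trace-class operators yields that $F$ itself is trace-class.

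To obtain such a factorization, split $F = L_k \cdot T_k \cdot M_k$ at the $k$-th position, where
\[
L_k := Y_1 T_1 \cdots Y_{k-1} T_{k-1} Y_k, \qquad M_k := Y_{k+1} T_{k+1} \cdots Y_n T_n
\]
(with the convention that empty products are the identity). Let $R$ be a common propagation bound for $T_1, \dots, T_n$. The key claim is that there exists $S > 0$ (depending only on $R$ and $n$) such that $L_k$ is right-supported on $B_L := \bigcap_{i=1}^{k}(Y_i)_S$ and $T_k M_k$ is left-supported on $B_R := \bigcap_{i=k+1}^{n}(Y_i)_S$, meaning $L_k = L_k \cdot B_L$ and $T_k M_k = B_R \cdot T_k M_k$. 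The first claim is proved by induction on $k$: writing $L_k = L_{k-1} \cdot T_{k-1} \cdot Y_k$, the inductive hypothesis gives $L_{k-1} = L_{k-1} B_{L,k-1}$, and propagation of $T_{k-1}$ yields $B_{L,k-1} T_{k-1} = B_{L,k-1} T_{k-1} (B_{L,k-1})_R$, which after right-multiplication by $Y_k$ enforces the next step of the claim. The second claim follows from the analogous argument applied to $M_k$, followed by one additional application of propagation to push through $T_k$.

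With the support estimates in hand, the identities $L_k B_L = L_k$ and $B_R T_k M_k = T_k M_k$ combine to give
\[
F = L_k B_L \cdot B_R T_k M_k = L_k \cdot (B_L \cap B_R) \cdot T_k \cdot M_k.
\]
By coarse transversality of $Y_1, \dots, Y_n$, the set $B := B_L \cap B_R \subseteq \bigcap_{i=1}^n (Y_i)_S$ is bounded, and hence compact by properness of $M$. Local trace-classness of $T_k$ therefore implies that $B T_k$ is trace-class, and since $L_k$ and $M_k$ are bounded, we conclude that $F = L_k \cdot (B T_k) \cdot M_k$ is trace-class.

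The main technical obstacle is the inductive support-tracking argument, which requires careful bookkeeping of the $R$-thickenings of the sets $Y_i$ as they accumulate through successive applications of the propagation bound. Once the two support identities are established, coarse transversality immediately converts the intersection of all $n$ thickenings into a bounded and therefore compact set, and the conclusion follows from the ideal property of the trace-class operators.
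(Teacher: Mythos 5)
Your proof is correct.

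The overall idea is the same as the paper's: use finite propagation to track the support of partial products and let coarse transversality convert the accumulated intersection of thickenings $\bigcap_i (Y_i)_S$ into a bounded (hence precompact) set. But the decomposition of the argument is genuinely different. The paper argues in two layers: it first shows via Lemma~\ref{LemmaLocallyTraceclass} that the full product is locally trace-class, then via iterated applications of Lemma~\ref{LemmaTSsupported} that it is supported on $\{Y_1\}\Cap\cdots\Cap\{Y_n\}=\cB$, and finally invokes Lemma~\ref{LemmaTraceClass}, which says a locally trace-class operator supported on a bounded set is trace-class. You instead single out the locally trace-class factor $T_k$, perform one-sided support-tracking of the left block $L_k$ and the right block $T_kM_k$, and insert a single compact indicator $B$ directly adjacent to $T_k$; trace-classness then follows at once from $BT_k\in\sL_1$ and the two-sided ideal property, without ever asserting that the whole product is locally trace-class. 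Your route is more self-contained (it re-derives the support estimates inline rather than citing helper lemmas) but for the same reason is more bookkeeping-heavy; the paper's modular version is shorter precisely because Lemmas~\ref{LemmaTSsupported}--\ref{LemmaTraceClass} have already been established and are reused elsewhere. One small point worth noting explicitly: the set $B=\bigcap_{i=1}^n(Y_i)_S$ is a finite intersection of closed thickenings, so it is not merely bounded but closed and bounded, hence compact by properness; this is what guarantees that its indicator is a compactly supported Borel function in the sense of Definition~\ref{DefLTCFP}(iii).
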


\begin{proof}
Since one of the operators is locally trace-class and each of the $T_i$ as well as the multiplication operators $Y_i$ has finite propagation, the product is locally trace-class by Lemma~\ref{LemmaLocallyTraceclass}.

By Lemma~\ref{LemmaTSsupported}, for each $i=1, \dots, n$, the operators $Y_i T_i$ are supported on $\{Y_i\}$.
Fixing $i$, using the same lemma iteratively shows that the whole product is supported on $\{Y_i\}$.
Altogether this implies that $Y_1T_1 \cdots Y_n T_n$ is supported on $\{Y_1\} \Cap \cdots \Cap \{Y_n\}$.
By coarse transversality, this is the big family $\cB$ of bounded subsets, so the product is trace-class by Lemma~\ref{LemmaTraceClass}.
\end{proof}

We will frequently use the unitization $\sB(M)^+$ of the algebra of finite propagation locally trace-class operators, which may be realized as the subalgebra of $\mathscr{L}(\cH)$ consisting of those operators that differ from $\sB(M)$ by a multiple of the identity operator.

For $k\geq 1$, the matrix algebras $M_k(\sB(M)^+)$ are regarded as acting on the $M$-module $\mathcal{H}^{\oplus k}$,
\[
M_k(\sB(M)^+) \subseteq \sL(\cH^{\oplus k}).
\]
Elements $T\in M_k(\sB(M)^+)$ have the form $T=T_0 + S$ where $S\in M_k(\sB(M))$ while $T_0$ is scalar in the sense that its entries are scalar multiples of the identity operator on $\mathcal{H}$.
We write $\GL_k(\sB(M))^+$ for the group of invertible elements of $M_k(\sB(M)^+)$ whose scalar part is the identity operator on $\mathcal{H}^{\oplus k}$.


\subsection{Kitaev and Kubo pairings}\label{SecKitaevKuboPairings}

Let $M$ be a proper metric space and let $\sB(M)$ be the corresponding algebra of locally trace-class, finite propagation operators, defined on some $M$-module $\cH$.

\begin{definition}[Kitaev and Kubo pairings]
\label{DefinitionIdempotentPairing}
Let $A_0, \dots, A_n$ be a coarsely transverse partition of $M$. We define its \emph{Kitaev pairings} by the formulas
\begin{center}
\begin{tcolorbox}[width=0.87\textwidth,colback=white, colframe=black, sharp corners, boxrule=0.55pt]
\vspace{-0.3cm}
\begin{align*}
	\big[ P; A_0, \dots, A_n\big] &:= \Tr\Bigg(\sum_{\sigma \in S_{n+1}} \sgn(\sigma) \cdot A_{\sigma_0} P A_{\sigma_1} P\cdots A_{\sigma_n} P\Bigg)
	\\
	\big[ U; A_0, \dots, A_n\big] &:= \Tr\Bigg(\sum_{\sigma \in S_{n+1}} \sgn(\sigma) \cdot A_{\sigma_0} U A_{\sigma_1} U^{-1}\cdots A_{\sigma_{n-1}} U A_{\sigma_n} U^{-1}\Bigg)
\end{align*}
\end{tcolorbox}
\end{center}
where $P = P^2 \in M_k(\sB(M)^+)$ is an idempotent, $U \in \GL_k(\sB(M))^+$ is invertible, and $S_{n+1}$ denotes the group of permutations of $\{0,1,\ldots,n\}$.
In the second formula, we assume that $n$ is odd.
For a coarsely transverse collection of half-spaces $X_1, \dots, X_n$, we set $X_0 := \mathbf{1}$ and define its \emph{Kubo pairings} by
\begin{center}
\begin{tcolorbox}[width=0.9\textwidth,colback=white, colframe=black, sharp corners, boxrule=0.5pt]
\vspace{-0.3cm}
\begin{align*}
	\big[ P; X_1, \dots, X_n\big] &:= \Tr\Bigg(\sum_{\sigma \in S_{n+1}} \sgn(\sigma)\cdot X_{\sigma_0} P X_{\sigma_1} P \cdots X_{\sigma_n} P\Bigg)
	\\
	\big[ U; X_1, \dots, X_n\big] &:= \Tr\Bigg(\sum_{\sigma \in S_{n+1}} \sgn(\sigma)\cdot X_{\sigma_0}U X_{\sigma_1} U^{-1} \cdots X_{\sigma_{n-1}} U X_{\sigma_n} U^{-1}\Bigg),
\end{align*}
\end{tcolorbox}
\end{center}
where in the last formula, we again assume that $n$ is odd.
\end{definition}
The well-definedness of the traces in Definitions \ref{DefinitionIdempotentPairing} is verified by a hands-on approach in Section \ref{sec:WellDefinedPairings}. 
Alternatively, it also follows from a more abstract analysis of the pairings in Section \ref{SectionCyclicHomology}.

\begin{remark}\label{RemSwitchFunctions}
More generally, one may use a collection $\chi_1, \dots, \chi_n$ of coarsely transverse switch functions in the definition of the Kubo pairing.
However, we will see below (Lemma~\ref{LemSwitchToHalfSpace}) that one may always replace the $\chi_i$ with the indicator functions of their support (thus obtaining a coarsely transverse collection of half-spaces) without changing the corresponding Kubo pairing. 
Because of this, we will mostly work with half-spaces $X_1,\ldots, X_n$ to simplify the presentation.
\end{remark}

\begin{remark}
\label{remarkn=0}
An elaboration is needed for the Kitaev/Kubo pairing when $n=0$.
 Only compact spaces $M$ admit $0$-partitions, in which case $A_0=M=X_0$, and $\sB(M)$ is the algebra $\sL_1$ of trace-class operators on $\mathcal{H}$. 
 By convention, $\Tr$ refers to the trivial extension of the usual trace to $\sB(M)^+=\sL_1^+$ given by $\smash{\widetilde{\Tr}(T + \lambda)} = \Tr(T)$, and likewise to $M_k(\sB(M)^+)$. 
 Thus, the Kitaev pairing of $P=P^2\in M_k(\sB(M)^+)$ with $A_0=M$, or the Kubo pairing with the empty collection of half-spaces, is equal to $\smash{\widetilde{\Tr}(P)} = \Tr(Q)$, where $Q\in M_k(\sB(M))$ is the non-scalar part of $P$.
\end{remark}

\begin{remark}
The $n=1$ case with $M=\Z$, $X_1=\N$, and $U=(U_{jk})_{j,k\in\Z}$ a finite propagation unitary on the $M$-module $\cH = \ell^2(\Z;\C )$, was considered by Kitaev in \cite[\S C.1]{Kitaev}. 
Here we have
\begin{equation}\label{eqn:Kitaev_1D_formula}
[U;X_1]=\Tr(UX_1U^*-X_1) 
=\sum_{j\geq 0, k<0}\big(|U_{kj}|^2-|U_{jk}|^2\big),
\end{equation}
which was called the \emph{flow} of $U$. (Our sign convention differs from that in \cite{Kitaev}.) 
Such unitaries are sometimes called \emph{quantum walks}, and \eqref{eqn:Kitaev_1D_formula} was used in \cite{GNVW} as an index for studying quantum walks. When endowed with a suitable topology, the homotopy groups of the group of finite propagation unitary operators on $\ell^2(\mathbb{Z};\mathbb{C})$ were studied in \cite{KKT}.
\end{remark}

\begin{remark}
The $n=2$ and $M=\Z^2$ case of the Kitaev pairing, Definition~\ref{DefinitionIdempotentPairing}, was considered by Kitaev in \cite[\S C.3]{Kitaev}, as an alternative formula for (half of) the Hall conductance of a projection $P$, (up to a $2\pi i$ factor, Planck's constant and the electron charge).
A more commonly used formula is the Kubo pairing, which has an equivalent expression in terms of commutators with switch functions (see \eqref{KuboPairingCommutatorProduct}),
which is
\begin{equation*}
\Tr\Big(P \big[[P,\chi_1], [P,\chi_2]\big]\Big),
\end{equation*}
see, e.g., \cite{DEF,EGS}. Typically, one works with $M=\R^2$ and uses switch functions of the form $\chi_i(x_1,x_2)=\Lambda(x_i), i=1,2$, where $\Lambda:\R\to\R$ satisfies
\[
\Lambda(t)=\begin{cases} 1, & t>b,\\
0, &t<a,
\end{cases}
\]
for some interval $[a,b]$. Our more general notion of switch functions emphasizes that coarse transversality of their supports is the key underlying property.

In physics, a typical source of $P$ are the spectral projections for some spectrally gapped local Hamiltonian operator $H$. Such $P$ are usually locally trace-class, but do not have strict finite propagation. 
With sufficiently rapid decay properties, the Kubo formula continues to be well-defined for such $P$, see \cite{LudewigThiangPoly}. 
An example on $M=\R^2$ is explicitly computed in \cite{ThiangFock}.
\end{remark}

The main result of this paper is the following general quantization theorem for the higher Kubo/Kitaev trace pairings, which will be proved in Sections~\ref{SectionIntegralityKubo} and \ref{SectionIntegralityKitaev}.

\begin{theorem}
\label{MainQuantizationThm}
Let $A_0, \dots, A_n$ be a coarsely transverse partition of $M$, and $X_1, \ldots, X_n$ be a  coarsely transverse collection of half-spaces on $M$.

\noindent
If $n=2m$ is even, the pairings with an idempotent $P\in M_k(\sB(M)^+)$ satisfy
\begin{align*}
 \big[ P; A_0, \dots, A_{n}\big] &\in \frac{m!}{(2 \pi i)^{m} (2m)!} \cdot \Z,\\
\big[ P; X_1, \dots, X_{n}\big] &\in \frac{m!}{(2 \pi i)^{m} }  \cdot \Z.
\end{align*}
If $n=2m+1$ is odd, the pairings with invertibles $U\in \GL_k(\sB(M))^+$ satisfy
\begin{align*}
  \big[ U; A_0, \dots, A_{n}\big] & \in \frac{1}{(2 \pi i)^{m} m! } \cdot \Z,	\\
  \big[ U; X_1, \dots, X_{n}\big] & \in \frac{(2m +1)!}{(2 \pi i)^{m}m!} \cdot \Z.
\end{align*}
\end{theorem}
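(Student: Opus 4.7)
The plan is to interpret each trace formula as a coarse cohomology-homology pairing, reducing the question of integrality to identifying an integral lattice in a rational cohomology group. As developed in Sections~\ref{SectionHalfSpaceClasses}--\ref{SectionPartitionClasses}, a coarsely transverse collection of half spaces $X_1,\dots,X_n$ (respectively a coarsely transverse partition $A_0,\dots,A_n$) represents a class in degree-$n$ coarse cohomology of $M$, while an idempotent $P \in M_k(\sB(M)^+)$ or an invertible $U \in \GL_k(\sB(M))^+$ determines a $K$-theory class whose Chern character is a coarse homology class. By Lemma~\ref{LemmaPartitionInTermsOfChernCharacters}, the four pairings of Theorem~\ref{MainQuantizationThm} compute the canonical evaluation of cohomology on homology, up to the combinatorial prefactors written there. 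In particular the value of each pairing depends only on the $K$-theory class of $P$ (or $U$) and on the coarse cohomology class of the partition or half-space collection; the integrality statement is thus intrinsic to this cohomological pairing.

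For the Kubo pairings I would proceed by a dimension-reduction induction on $n$, as indicated in Section~\ref{SectionIntegralityKubo}. The base case $n=1$ is Roe's partitioned manifold index theorem: $[U;X_1] = \Tr(U X_1 U^{-1} - X_1)$ equals the Fredholm index of a compression and is therefore an integer, and the dual $n=1$ idempotent pairing is treated in Section~\ref{SectionDualIndexTheorem}. For the inductive step I would exploit the ideal structure
\[
\sB(\partial X_n) \;\subseteq\; \sB(\{X_n\}) + \sB(\{X_n^c\}) \;\subseteq\; \sB(M)
\]
from Definition~\ref{DefLTCFP} to obtain a connecting homomorphism $\partial \colon K_{\bullet}(\sB(M)) \to K_{\bullet-1}(\sB(\partial X_n))$. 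Coarse transversality of $X_1,\dots,X_n$ guarantees that the restricted collection $X_1,\dots,X_{n-1}$ is coarsely transverse on $\partial X_n$. The key technical identity to prove is then a reduction formula of the form
\[
\big[P;\,X_1,\dots,X_n\big] \;=\; c_n \cdot \big[\partial[P];\,X_1,\dots,X_{n-1}\big],
\]
with an explicit rational constant $c_n$ accounting for the change in the Kubo prefactor between degree $n$ and degree $n{-}1$ in Theorem~\ref{MainQuantizationThm}. Iterating down to $n=1$ yields integrality of the Kubo pairings in both parities.

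For the Kitaev pairings the plan is to reduce to the Kubo result via the equipartition principle of Corollary~\ref{CorEquipartition}. This principle relates the antisymmetrization over $S_{n+1}$ in the Kitaev formula to the corresponding sum in the Kubo formula, producing exactly the combinatorial factor $n!=(2m)!$ in the even case and $(2m+1)!$ in the odd case that distinguishes the two prefactors of Theorem~\ref{MainQuantizationThm}. Combining this with Kubo quantization gives Kitaev quantization with the sharper denominator. The classifying-map argument of Section~\ref{SectionIntegralityKitaev} is then used to transport the cohomological identification across the equipartition: every coarsely transverse partition is coarsely pulled back from a universal model partition (e.g.\ the standard partition of $\R^n$ from Example~\ref{ExampleEuclideanHalfSpaces}), and by naturality of the pairing integrality propagates from the universal case.

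The main obstacle will be the inductive step in the Kubo argument, namely verifying that the Kubo pairing descends along the $K$-theoretic connecting map with the correct rational constant $c_n$. This requires a precise compatibility of the Chern character with the connecting homomorphism associated to the ideal inclusion $\sB(\partial X_n) \subseteq \sB(M)$, together with a transgression identity in coarse cohomology expressing the class of $(X_1,\dots,X_n)$ on $M$ in terms of the class of $(X_1,\dots,X_{n-1})$ on $\partial X_n$. A subsidiary but nontrivial technical point is maintaining the locally trace-class and finite propagation properties under restriction at each inductive step; this should be controlled at each stage by Lemmas~\ref{LemmaLocalizationIdealProperty} and~\ref{LemmaLocallyTraceclass}.
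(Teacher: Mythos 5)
Your overall strategy matches the paper's: interpret the pairings homologically via Lemma~\ref{LemmaPartitionInTermsOfChernCharacters}, dimension-reduce the Kubo pairing along the transverse half spaces, and transfer the Kitaev case to the Kubo case through the equipartition principle and the classifying map into the standard corner. However, as written, the proposal has a genuine gap precisely at the "reduction formula" you flag as the key step.

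The connecting homomorphism $\partial\colon K_{\bullet}(\sB(M))\to K_{\bullet-1}(\sB(\partial X_n))$ does not exist in either of the two $K$-theories you might reach for. The localization ideals $\sB(\cY)$ are very unlikely to be $H$-unital, so by Wodzicki's theorem they do not satisfy excision in \emph{algebraic} $K$-theory, and they are not Banach algebras, so \emph{topological} $K$-theory does not apply to them directly. Without excision there is no long exact sequence for the inclusion $\sB(\partial X_n)\subseteq\sB(\{X_n\})$ and hence no Mayer--Vietoris boundary. The paper circumvents this by routing through Weibel's \emph{homotopy} $K$-theory $KH$, which satisfies unconditional excision for arbitrary ideals and receives a natural map from $K^{\mathrm{alg}}$; the boundary map one iterates lives there, not in $K^{\mathrm{alg}}$ or $K^{\mathrm{top}}$. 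Relatedly, the pairing $[\partial[P];X_1,\dots,X_{n-1}]$ is not \emph{a priori} defined, since $\partial[P]$ is an abstract $K$-theory class rather than an idempotent or invertible; one must descend to (coarse) homology via the Chern and coarse character maps and prove the compatibility of $\partial_{\mathrm{MV}}$ with $\ch$ and $\chi_*$ (Lemma~\ref{LemmaChernMVboundary}) together with the dual identity $[\theta_k]=\delta_{\mathrm{MV}}[\theta_{k-1}]$ for the half-space cocycles. Finally, the crucial $(2\pi i)^m$ does \emph{not} arise from your "rational $c_n$" in the inductive steps. It enters only at the very end: after iterating the MV boundary down to $\sB(\cX_0)\subseteq\sL_1$, one lands in $KH_{-2m}(\sL_1)$, which must be compared to $K_0^{\mathrm{top}}(\sL_1)\cong\Z$ via Bott periodicity $\beta^m$, and $\beta$ corresponds under the topological Chern character to the $S$-operator \emph{divided by} $2\pi i$ (Eq.~\eqref{CompatibilityBottS}). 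Without invoking this topological comparison your induction would only produce a lattice independent of $2\pi i$, which is the wrong answer. Your remark that the $n=1$ base case is Roe's partitioned manifold theorem is also a mild mismatch: the reduction goes all the way to $n=0$ and identifies the pairing with the integer trace of a finite-rank projection; Roe's $n=1$ index theorem is a consequence, not the anchor, of this argument.
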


We will see below that the pairings with $P$ vanish when $n$ is odd.
For $M= \R^d$, the values of the pairings for $n=d$ are not contained in any smaller subset than the ones specified above, so this result is optimal for general $M$, see Thm.~\ref{ThmNontrivialPairing}.

The following result shows that only the even $n$ case can produce non-trivial pairings with idempotents.

\begin{proposition}
\label{ThmVanishingThmRemainingCases}
Let $A_0, \dots, A_n$ be a coarsely transverse partition of $M$ and let $X_1, \dots, X_n$ be a coarsely transverse collection of half-spaces. 
If $n$ is odd, then for any idempotent $P \in M_k(\sB(M)^+)$, we have
\[
[P; A_0, \dots, A_n] = 0 \qquad \text{and} \qquad [P; X_1, \dots, X_n] = 0.
\] 
\end{proposition}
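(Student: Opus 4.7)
The plan for both pairings is to exploit the idempotent identity $P^2=P$ together with cyclicity of the trace. The two cases differ in how cleanly the trace applies to individual summands, so I treat them separately.

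\textbf{Kitaev case.} Each individual operator $A_{\sigma_0} P A_{\sigma_1} P \cdots A_{\sigma_n} P$ is trace-class: splitting $P = P_0 + Q$ into its scalar part $P_0 \in M_k(\C)$ and locally trace-class part $Q \in M_k(\sB(M))$, and expanding, every term with $P_0$ at an interior position vanishes, since $P_0$ commutes with each $A_{\sigma_i}$ and $A_{\sigma_k} A_{\sigma_{k+1}} = 0$ (the $A_i$'s being pairwise disjoint); the surviving terms are trace-class by Lemma~\ref{LemmaLocalizationIdealProperty}. Define $f(\sigma) := \Tr(A_{\sigma_0} P \cdots A_{\sigma_n} P)$ and let $\tau \in S_{n+1}$ be the $(n{+}1)$-cycle $i \mapsto i+1 \pmod{n+1}$, of sign $(-1)^n$. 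With $X := A_{\sigma_0} P$ and $Y := A_{\sigma_1} P \cdots A_{\sigma_n} P$, both $XY$ and $YX$ are trace-class (same argument), so cyclicity of trace gives $f(\sigma) = \Tr(XY) = \Tr(YX) = f(\sigma\tau)$. Re-indexing via $\sigma = \sigma'\tau$,
\[
[P; A_0, \ldots, A_n] = \sum_\sigma \sgn(\sigma) f(\sigma) = \sgn(\tau) \sum_{\sigma'} \sgn(\sigma') f(\sigma') = (-1)^n \, [P; A_0, \ldots, A_n],
\]
which forces the pairing to vanish when $n$ is odd.

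\textbf{Kubo case.} Here individual operators $X_{\sigma_0} P X_{\sigma_1} P \cdots X_{\sigma_n} P$ may fail to be trace-class: because $X_0 = \mathbf{1}$ and the $X_i$'s are not pairwise disjoint, the Kitaev-style cancellation is unavailable. I instead use the commutator reformulation \eqref{KuboPairingCommutatorProduct},
\[
[P; X_1, \ldots, X_n] = \sum_{\sigma \in S_n} \sgn(\sigma) \Tr(P C_{\sigma_1} \cdots C_{\sigma_n}), \qquad C_i := [P, X_i],
\]
in which each summand is trace-class. A direct computation gives $P C_i P = P(PX_i - X_iP)P = P^2 X_i P - PX_i P^2 = 0$ and likewise $(1-P) C_i (1-P) = 0$, so each $C_i$ is off-diagonal with respect to the algebraic decomposition $\cH^{\oplus k} = P\cH^{\oplus k} \oplus (1-P)\cH^{\oplus k}$. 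Hence $C_{\sigma_1} \cdots C_{\sigma_n}$ has $\Z_2$-parity $(-1)^n$; for $n$ odd it is off-diagonal, so $P C_{\sigma_1} \cdots C_{\sigma_n} P = 0$. Using $P = P^2$ and cyclicity (legitimate because $P C_{\sigma_1} \cdots C_{\sigma_n}$ is trace-class),
\[
\Tr(P C_{\sigma_1} \cdots C_{\sigma_n}) = \Tr(P \cdot P C_{\sigma_1} \cdots C_{\sigma_n}) = \Tr(P C_{\sigma_1} \cdots C_{\sigma_n} \cdot P) = 0,
\]
so each summand vanishes separately.

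\textbf{Main obstacle.} The Kubo pairing is the more delicate case: the cyclic-shift argument used for Kitaev does not transplant directly, because coarse transversality of $X_1, \ldots, X_n$ does not bound the corner spaces $\{X_{\sigma_0}\} \Cap \cdots \Cap \{X_{\sigma_n}\}$ when $X_0 = \mathbf{1}$ appears among the factors. Passing to the commutator reformulation restores trace-classness term by term, and the off-diagonal property of $[P, X_i]$ then makes the odd-$n$ vanishing completely transparent at the level of each summand.
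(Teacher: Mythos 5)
Your proof is correct. The Kitaev half is essentially the same as the paper's: you pull the trace inside the sum, cyclically rotate $A_{\sigma_0}P$ around, and absorb the rotation into a relabelling of $\sigma$ by an $(n{+}1)$-cycle of sign $(-1)^n$, forcing the pairing to equal its own negative when $n$ is odd. Your justification that each summand is individually trace-class (via the $P=P_0+Q$ split, commutativity of $P_0$ with the $A_i$'s, and $A_{\sigma_k}A_{\sigma_{k+1}}=0$) matches what the paper establishes in Section~\ref{sec:WellDefinedPairings} leading to Eq.~\eqref{AlternativeFormulaKitaevPairing}.

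The Kubo half, however, takes a genuinely different route. The paper stays with the formulation
\(\sum_\sigma\sgn(\sigma)\,PX_{\sigma_0}P\cdots PX_{\sigma_n}P\),
groups summands by the position $i$ at which $\sigma_i=0$, uses $PX_0P=P$ to collapse to permutations of $\{1,\dots,n\}$, and then observes that the combinatorial factor $\sum_{i=0}^n(-1)^i$ vanishes for odd $n$ — so it is the \emph{alternating sum} that dies, not any individual term. You instead pass to the commutator form \eqref{KuboPairingCommutatorProduct}, observe that each $C_i=[P,X_i]$ is off-diagonal with respect to the $\Z_2$-grading $\cH^{\oplus k}=P\cH^{\oplus k}\oplus(1-P)\cH^{\oplus k}$ (since $PC_iP=0=(1-P)C_i(1-P)$, which needs only $P^2=P$, not self-adjointness), and deduce that a product of an odd number of them is again off-diagonal, so each summand $\Tr(PC_{\sigma_1}\cdots C_{\sigma_n})=\Tr(PC_{\sigma_1}\cdots C_{\sigma_n}P)=0$ \emph{separately}. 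Both arguments are valid; yours is cleaner in that it localizes the vanishing to each term and makes the structural mechanism visible — the same parity obstruction that explains why only even Chern characters pair with idempotents. One small remark: your Kitaev argument actually also works term by term via the cyclic trick, so the two halves of your proof are more parallel than the paper's, whereas the paper's two halves use rather different combinatorics.
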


\begin{proof}
By cyclicity of the trace, we have
\begin{align*}
[P; A_0, \dots, A_n]
&= \sum_{\sigma \in S_{n+1}} \sgn(\sigma) \cdot \Tr\big(A_{\sigma_0} P \cdots A_{\sigma_{n-1}}PA_{\sigma_n}P\big)	
\\
&= \sum_{\sigma \in S_{n+1}} \sgn(\sigma) \cdot \Tr\big(A_{\sigma_n}PA_{\sigma_0} P \cdots A_{\sigma_{n-1}}P\big).
\end{align*}
Since $n$ is odd, the cyclic permutation of $n+1$ elements has negative sign. 
Hence substituting $\sigma'_0 = \sigma_n$ and $\sigma_{i+1}' = \sigma_i$, (for $i= 0, \dots, n-1$), we get $\sgn(\sigma') = -\sgn(\sigma)$ and therefore the right hand side above equals $-[P; A_0, \dots, A_n]$. 
Thus $[P; A_0, \dots, A_n]=0$.

Using cyclicity of the trace and the fact that $P$ is an idempotent, we may also take the sum over $PX_{\sigma_0}P \cdots P X_{\sigma_n} P$ in the definition of the Kubo pairing (Definition~\ref{DefinitionIdempotentPairing}), which is therefore equal to the trace of the following operator,
\begin{align*}
\sum_{\sigma\in S_{n+1}}\sgn(\sigma)\cdot PX_{\sigma_0} P \cdots PX_{\sigma_n}P 
&=  \sum_{i=0}^n \sum_{\substack{\sigma\in S_{n+1} \nonumber\\ \sigma_i=0}}\sgn(\sigma)\cdot PX_{\sigma_0} P \cdots \underbrace{PX_{\sigma_i} P}_{=P} \cdots PX_{\sigma_n}P
 \nonumber \\
 &= \sum_{i=0}^n\sum_{\sigma^\prime\in S_{n}}(-1)^{i}\cdot\sgn(\sigma^\prime)\cdot PX_{\sigma^\prime_1} P \cdots PX_{\sigma^\prime_{n}} P. 
 \end{align*}
If $n$ is odd, then $\sum_{i=0}^n (-1)^i = 0$, so the above operator vanishes. Thus the Kubo pairing vanishes in this case.
\end{proof}

\subsection{Well-definedness of the pairings}\label{sec:WellDefinedPairings}

In this section, we show that the Kitaev and Kubo pairings from Definition~\ref{DefinitionIdempotentPairing} are well-defined in the sense that the operators that the trace is taken of are in fact trace-class.
We also derive several alternative formulas for these pairings.

\paragraph{Kitaev pairing with idempotents.}

It follows immediately from Lemma~\ref{LemmaLocalizationIdealProperty} above that if $P$ is an idempotent that is locally trace-class (i.e., contained in $M_k(\sB(M)) \subset M_k(\sB(M)^+)$), then each of the operators $A_{\sigma_0} P \cdots A_{\sigma_n} P$ is trace-class, so the Kitaev pairing is well-defined and we have
\[
[P; A_0, \dots, A_n] = \sum_{\sigma \in S_{n+1}}\sgn(\sigma) \cdot \Tr \big(A_{\sigma_0} P \cdots A_{\sigma_n} P\big),
\]
in other words, the sum over all permutations may be pulled out of the trace.
However, the pairings will be trivial for a large class of such $P$, hence it is especially important to check well-definedness also for projections $P$ in $M_k(\sB(M)^+)$, see Theorem \ref{ThmNontrivialPairing} and Remark \ref{RemarkTranslationInvariantVanish}.
In this case, we may write $P=P_0+Q$, where $P_0\in M_k(\C)$ is a scalar matrix (whose entries are multiples of $1_\mathcal{H}$) while the remainder $Q$ lies in $M_k(\sB(M))$.
As $P_0$ commutes with every $A_i$, we have $A_{\sigma_i}P_0A_{\sigma_{i+1}}=P_0A_{\sigma_i}A_{\sigma_{i+1}}$. Therefore, any term with a factor $A_{\sigma_i}P_0A_{\sigma_{i+1}}$ will vanish after taking the signed sum over all permutations.
Thus, 
\begin{align*}
&\sum_{\sigma \in S_{n+1}}^{\,} \sgn(\sigma)\cdot A_{\sigma_0} P \cdots A_{\sigma_n} P 
=\sum_{\sigma \in S_{n+1}}^{\,} \sgn(\sigma)\cdot A_{\sigma_0} Q \cdots A_{\sigma_{n-1}} Q A_{\sigma_n} (Q+P_0)\nonumber\\
&\qquad =\sum_{\sigma \in S_{n+1}}^{\,} \sgn(\sigma)\cdot A_{\sigma_0} Q \cdots A_{\sigma_n}Q 
 + \sum_{\sigma \in S_{n+1}}^{\,} \sgn(\sigma) \cdot A_{\sigma_0} Q \cdots A_{\sigma_{n-1}} Q A_{\sigma_n}P_0.
\end{align*}
As $Q\in M_k(\sB(M))$ has finite propagation and is locally trace-class, Lemma~\ref{LemmaLocalizationIdealProperty} applies, saying that each individual summand is trace-class.
Because $P_0$ commutes with $A_{\sigma_n}$, the second term actually vanishes after applying the trace and using its cyclicity.
Therefore, the Kitaev pairing reduces to
\begin{equation}
\label{AlternativeFormulaKitaevPairing}
\big[ P; A_0, \dots, A_n\big] =\sum_{\sigma \in S_{n+1}} \sgn(\sigma) \cdot \Tr\big( A_{\sigma_0} Q \cdots A_{\sigma_n} Q\big).
\end{equation}
Note that the $n=0$ case of \eqref{AlternativeFormulaKitaevPairing} is precisely the convention for the $n=0$ Kitaev/Kubo pairing mentioned in Remark \ref{remarkn=0}.
%
%

\paragraph{Kitaev pairing with invertibles.}
Because $U$ and $U^{-1}$ lie in $\GL_k(\sB(M))^+$, the operators $U-1$ and $U^{-1}-1$ belong to $M_k(\sB(M))$, hence are locally trace-class. 
With similar arguments as above, one may therefore establish the alternative formula
\begin{equation}
\label{KitaevPairingAlt}
[U; A_0, \dots, A_n] = 	\sum_{\sigma \in S_{n+1}}^{\,} \sgn(\sigma) \cdot \Tr\big( A_{\sigma_0} (U - 1) A_{\sigma_1} (U^{-1} - 1)\cdots A_{\sigma_n} (U^{-1} - 1)\big),
\end{equation}
for which well-definedness follows again from Lemma~\ref{LemmaLocalizationIdealProperty}.
%

\paragraph{Kubo pairing with idempotents.}

For $n=0$, see Remark \ref{remarkn=0}; we now assume $n\geq 1$. 
Splitting $P = P_0 + Q$ with $P_0 \in M_k(\C)$ and $Q$ locally trace-class and finite propagation, we may argue as in the partition case to obtain the formula
\begin{equation}
\label{KuboPairingCommutatorProduct2}
  	\big[ P; X_1, \dots, X_n\big]
  	= \Tr\left(\sum_{\sigma \in S_{n+1}} \sgn(\sigma)\cdot X_{\sigma_0} Q X_{\sigma_1} Q \cdots X_{\sigma_n} Q\right).
\end{equation}
However, it is not apparent from this formula that the operator in the bracket is trace-class as Lemma~\ref{LemmaLocalizationIdealProperty} only yields that it is supported on the ``corner space'' $\{X_1\} \Cap \cdots \Cap \{X_n\}$ which is typically not bounded.
We will therefore show that the Kubo pairing may be rewritten as
\begin{equation}
\label{KuboPairingCommutatorProduct}
  	\big[ P; X_1, \dots, X_n\big]
  	= \sum_{\sigma \in S_{n}} \sgn(\sigma)\cdot \Tr\big(P[P,X_{\sigma_1}] \cdots [P,X_{\sigma_n}]\big).
\end{equation}
Then using that $X_{\sigma_i}$ commutes with $P_0$, we may replace each 
$[P,X_{\sigma_i}]$ in \eqref{KuboPairingCommutatorProduct} by $[Q,X_{\sigma_i}]$ and Lemma~\ref{LemmaLocallyTraceclass} implies that the product is locally trace-class.
Moreover, for each $i$, the commutator
\[
[Q,X_i]=(X_i+X_i^c)QX_i-X_iQ(X_i+X_i^c)=X_i^cQX_i-X_iQX_i^c
\]
is supported on $\{X_i\} \Cap \{X_i^c\} = \partial X_i$ by Lemma~\ref{LemmaTSsupported}.
The product over $i$ of these commutators is therefore supported on $\partial X_1 \Cap \cdots \Cap \partial X_n$, and is trace-class by Lemma~\ref{LemmaTraceClass} and the coarse transversality assumption.

The derivation of \eqref{KuboPairingCommutatorProduct} is a rather standard calculation, see \cite[Lemma~1.8]{MoscoviciWu2}: Recalling that $X_0=\mathbf{1}$, we first rewrite
\begin{align*}
&\sum_{\sigma\in S_{n+1}}\sgn(\sigma)\cdot X_{\sigma_0} P \cdots PX_{\sigma_n}P \\
&= \sum_{\substack{\sigma\in S_{n+1} \\ \sigma_0=0}}\sgn(\sigma)\cdot PX_{\sigma_1}P\cdots PX_{\sigma_n}P + \sum_{i=1}^n \sum_{\substack{\sigma\in S_{n+1} \nonumber\\ \sigma_i=0}}\sgn(\sigma)\cdot X_{\sigma_0} P \cdots \underbrace{PX_{\sigma_i} P}_{=P} \cdots PX_{\sigma_n}P\\
&=\sum_{\sigma^\prime\in S_{n} }\sgn(\sigma^\prime)\cdot PX_{\sigma^\prime_1}P\cdots PX_{\sigma^\prime_n}P + \sum_{i=1}^n \sum_{\sigma^\prime\in S_{n}}(-1)^i\cdot\sgn(\sigma^\prime)\cdot X_{\sigma^\prime_1} P \cdots PX_{\sigma^\prime_n}P,
\end{align*}
where in the last equality, for each $\sigma \in S_{n+1}$ with $\sigma_i=0$, (viewed as the group of permutations of the set $\{0, \dots, n\})$,  $\sigma'$ is the permutation of $\{1, \dots, n\}$ given by
\[
\sigma' = 
\begin{pmatrix}
1 & \cdots &i & i +1  & \cdots & n
\\
\sigma_0 & \cdots & \sigma_{i-1} & \sigma_{i+1} &\cdots & \sigma_n
\end{pmatrix},
\]
which has sign $\sgn(\sigma') = (-1)^i \cdot  \sgn(\sigma)$. 
As $\sum_{i=1}^n (-1)^i$ equals $0$ when $n$ is even and $-1$ when $n$ is odd, we have
\[
\sum_{\sigma\in S_{n+1}}\sgn(\sigma)\cdot X_{\sigma_0} P \cdots PX_{\sigma_n}P=
\sum_{\sigma\in S_n}\sgn(\sigma)\cdot\begin{cases}
PX_{\sigma_1} P\cdots PX_{\sigma_n} P, & n\;\mathrm{even},\\
 [P,X_{\sigma_1}] PX_{\sigma_2}P\cdots PX_{\sigma_n} P, & n\;\mathrm{odd}.
\end{cases}
\]
Next, we iteratively substitute the identity
\[
PX_iPX_jP  = [P,X_i][P,X_j]P + PX_iX_jP,
\]
noting that the term $PX_iX_jP$ may be omitted as it does not contribute to a sum which is anti-symmetric in $i,j$. In other words, 
\begin{equation*}
\sum_{\sigma\in S_{n+1}}\sgn(\sigma)\cdot X_{\sigma_0} P \cdots PX_{\sigma_n}P
=\sum_{\sigma\in S_n}\sgn(\sigma)\cdot [P,X_{\sigma_1}]\cdots [P,X_{\sigma_n}] P, 
\end{equation*}
whether $n$ is even or odd, which implies \eqref{KuboPairingCommutatorProduct} after taking the trace.

\paragraph{Kubo pairing with invertibles.}

As above, well-definedness of the pairing follows from the alternative formula
\begin{equation}\label{AlternativeFormulaKuboOdd1}
[U; X_1, \dots, X_n] = 	\sum_{\sigma\in S_{n}}\sgn(\sigma)\cdot \Tr\big(U[X_{\sigma_1},U^{-1}]\cdots [X_{\sigma_{n-1}},U][X_{\sigma_n},U^{-1}]\big),
\end{equation}
in which each summand is trace-class.
It will be important below that besides \eqref{AlternativeFormulaKuboOdd1}, one has another alternative formula
\begin{equation}
\label{AlternativeFormulaKuboOdd2}
[U; X_1, \dots, X_n] = 	\Tr\Bigg(\sum_{\sigma\in S_{n}}\sgn(\sigma)\cdot (U-1)X_{\sigma_1}(U^{-1}-1)\cdots X_{\sigma_n}(U^{-1}-1)\Bigg),
\end{equation}
which follows from the fact that when expanding the product for each individual summand, the terms involving an identity operator will vanish by anti-symmetry.

\subsection{Comparison to the theory of Connes}
\label{ConnesTheory}

Formula \eqref{KuboPairingCommutatorProduct} resembles well-known formulas for the Chern character of Fredholm modules in noncommutative geometry.
One might therefore wonder whether our quantization results are just straightforward generalizations of the noncommutative index theory of Connes \cite{Connes}; see also \cite{DouglasVoiculescu1981} for similar arguments.
In this section, we argue that this cannot be the case.

Recall that an even $p$-summable \emph{Fredholm module} over a (trivially graded) algebra $A$ is a super Hilbert space $\hat{\mathcal{H}}$, together with a representation of $A$ (as even operators in $\sL(\hat{\mathcal{H}})$) and an odd involution $F$ on $\hat{\mathcal{H}}$,  such that $[F, a]$ belongs to the $p$-Schatten ideal $\sL_p(\hat{\mathcal{H}})$ for all $a \in A$.
If the Fredholm module is $n$-summable, $n = 2m$ even, its $n$-th \emph{Chern character} is defined by 
\[
\tau_n(a_0, \dots, a_n) := 
\Str\big(a_0 [F, a_1] \cdots [F, a_n]\big),
\]
where $\Str$ denotes the supertrace.
This is a cyclic cocycle on $A$ \cite[\S2, Prop.~5]{Connes}.
%
%
If $P$ is an idempotent over $A$, defining a class in the  $K$-theory group $K_0^{\mathrm{alg}}(A)$, then the noncommutative index theorem of Connes says that
\[
\ind(PFP + 1-P) = \tau_n(\underbrace{P, \dots, P}_{n+1}) = \Str\big(P [F, P]^{n}\big),
\]
see \cite[\S3, Thm.~1]{Connes}.

In our case, we might try to apply this result as follows: For $n=2m\geq 2$, let $X_1, \dots, X_n$ be a coarsely transverse collection of half-spaces. 
We may choose odd self-adjoint Clifford generators $\Gamma_1, \dots, \Gamma_n$ acting on $\C^{2^{m-1}}\oplus \C^{2^{m-1}}$ 
 (i.e., satisfying $\Gamma_i\Gamma_j + \Gamma_j\Gamma_i = 2 \delta_{ij}$), and consider the operator
\begin{equation*}
F := \frac{1}{\sqrt{n}}\sum_{i=1}^n (X_i - X_i^c) \otimes \Gamma_i
\end{equation*}
acting on $\hat{\mathcal{H}} := \mathcal{H} \otimes (\C^{2^{m-1}}\oplus \C^{2^{m-1}})$, where $\mathcal{H}$ is an $M$-module.
The Clifford generators can be chosen such that the supertrace of the Clifford volume element $\Gamma_1 \cdots \Gamma_n$ is $(2i)^m$, while that of $\Gamma_{i_1} \cdots \Gamma_{i_n}$ vanishes whenever any of the indices $i_1, \dots, i_n$ is repeated. 
One then calculates that
\begin{equation}
\Str\big(a_0 [F, a_1] \cdots [F, a_n]\big)
 = \left(\frac{4i}{m}\right)^m \cdot \sum_{\sigma \in S_n} \sgn(\sigma) \cdot \Tr\big(a_0[X_{\sigma_1}, a_1] \cdots [X_{\sigma_n}, a_n]\big),\label{eqn:SuperTrace}
\end{equation}
under the assumption that trace-class conditions hold throughout the computation.
%
%
Taking $a_0=\dots=a_n=P$ to be an idempotent in $M_k(\sB(M)^+)$, the last expression in \eqref{eqn:SuperTrace} is just a multiple of the Kubo trace pairing \eqref{KuboPairingCommutatorProduct}.
So combining \eqref{eqn:SuperTrace} with the definition of the Chern character and the index theorem of Connes, we would obtain for the trace pairing
\[
\big[P; X_1, \dots, X_n\big] = \left(\frac{m}{4i}\right)^m \cdot \tau_n(\underbrace{P, \dots, P}_{n+1}) \in \left(\frac{m}{4i}\right)^m \cdot  \Z,
\]
contradicting Thm.~\ref{MainQuantizationThm}.
The point is that the operator $[F, P]$ will \emph{not} generally be in $\sL_p(\hat{\mathcal{H}})$ for any $p \in [1, \infty)$, so the left side of \eqref{eqn:SuperTrace} is ill-defined and Connes' index theorem cannot be applied. Specifically, the operators $a_0 [X_{i_1}, a_1] \cdots [X_{i_n}, a_n]$ are only guaranteed to be trace-class when none of the indices $i_1,\ldots,i_n$ are repeated (see Lemma~\ref{LemmaLocalizationIdealProperty}), so $(\hat{\mathcal{H}},F)$ is not actually an $n$-summable Fredholm module.

What seems to be going on here is that in Connes' theory for Fredholm modules, the operator ideals $\sL_p(\cH)$ being considered are \emph{totally} ordered.
In contrast, in our theory of Kubo trace pairings, the products $[X_{i_1}, P]\cdots [X_{i_\ell}, P]$ are contained in the ideal of operators localized at the intersection of the boundaries of $X_{i_1} , \dots, X_{i_\ell}$ (see Lemma~\ref{LemmaLocalizationIdealProperty} and the proof of Eq.~\eqref{KuboPairingCommutatorProduct}).
These localization ideals have a partial order by inclusion, which is \emph{not} a total order.

\section{Coarse (co)homology}
\label{SectionCoarseCohomology}

In this section, we give the necessary background on coarse homology and cohomology and introduce the coarse cochains induced by partitions and half-spaces.

Coarse cohomology was introduced by J.~Roe for the study of indices of Dirac operators on non-compact complete Riemannian manifolds \cite{Roe-cohom}. 
For the purposes of this paper, we provide a presentation of the dual coarse homology theory in terms of measures, which is particularly well-suited for the case of proper metric spaces; 
see also \cite{Mitchener,WulffCoarseCohomology} for slightly different pictures.

\subsection{Definition of coarse homology}

For each $n\geq 0$, we equip $M^{n+1}$ with the distance function that takes the maximum of the distances within each factor. 
The \emph{multi-diagonal} is
\[
\Delta = \{(x, \dots, x) \in M^{n+1} \mid x \in M\}.
\]
A \emph{coarse $n$-chain on} $M$ is a complex-valued, locally finite regular Borel measure on $M^{n+1}$, which has support in a thickening $\Delta_R$ of the diagonal for some $R>0$. 
We denote the space of coarse $n$-chains by $CX_n(M)$. The spaces $CX_\bullet(M)$ form a chain complex, with the differential
\begin{equation*}
\partial \mu = \sum_{i=0}^n (-1)^i (\pi_i)_* \mu,\qquad \mu\in CX_n(M),
\end{equation*}
where $\pi_i : M^{n+1} \to M^n$ $(0 \leq i \leq n)$ is the projection omitting the $i$-th factor.
The corresponding homology groups are called the \emph{coarse homology groups} of $M$.

We say that a measure $\mu \in CX_n(M)$ is \emph{supported on $Y \subseteq M$} if it vanishes outside of $Y^{n+1}$.
If $\cY$ is a big family in $M$, we say that $\mu$ is \emph{supported on} $\cY$ if it is supported on $Y$ for some member $Y$ of $\cY$ and denote by  $CX_\bullet(\cY) \subseteq CX_\bullet(M)$ the subcomplex of measures  supported on $\cY$. 

\begin{definition}[Coarse homology]
The homology groups of the chain complex $CX_\bullet(\cY)$ are called \emph{coarse homology} groups of $\cY$ and denoted by $HX_\bullet(\cY)$.
If $\cX$ is a further big family, the \emph{relative} coarse homology groups $HX_\bullet(\cX, \cX \Cap \cY)$ are the homology groups of the quotient complex $CX_\bullet(\cX)/CX_\bullet(\cX\Cap\cY)$.
\end{definition} 

\begin{remark}
For any big family $\cY$, the space $CX_n(\cY)$ is the filtered colimit of all $CX_n(Y)$, for $Y \in \cY$, where the $Y$ are viewed as metric spaces with the induced metric, and the connecting maps in the colimit are given by inclusions.
Because filtered colimits commute with taking homology, we obtain
\begin{equation}
\label{ColimitFormula}
HX_n(\cY) \cong \colim_{Y \in \cY} \,HX_n(Y).
\end{equation}
In particular, if $\cY = \{Y_0\}$ is generated by a single member, then all connecting maps are isomorphisms as soon as the members are large enough, by coarse invariance of coarse homology (see \cite[\S4.1]{WulffCoarseCohomology}).
Hence $HX_n(\cY) \cong HX_n(Y_0)$.
\end{remark}

A (not necessarily continuous) Borel-measurable map $f: M \to N$ between proper metric spaces is a \emph{coarse map} if it is \emph{controlled}, meaning that for each $r>0$, there exists $R>0$ such that $d(x, y) \leq r$ implies $d(f(x), f(y)) \leq R$, and \emph{proper}, meaning that the inverse image $f^{-1}(B)$ of each bounded set $B \subseteq N$ is bounded.

A controlled map $f : M \to N$ sends uniform neighborhoods of the multi-diagonal in $M$ to uniform neighborhoods of the multi-diagonal in $N$, while properness (and measurability) of $f$ ensures that pushforward along $f \times \cdots \times f$ sends locally finite measures to locally finite ones. 
Hence any coarse map induces (via pushforward along $f \times \cdots \times f$) a chain map
\[
f_* : CX_\bullet(M) \longrightarrow CX_\bullet(N).
\]
Coarse homology is \emph{coarsely invariant}, in the following sense.

\begin{proposition}
\label{PropCoarselyInvariant}
If two measurable coarse maps $f, g: M \to N$ are close, then they induce the same maps in coarse homology.
\end{proposition}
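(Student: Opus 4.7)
The plan is to construct an explicit chain homotopy $h: CX_n(M) \to CX_{n+1}(N)$ with $\partial h + h \partial = g_* - f_*$, by adapting the classical prism construction from simplicial homology to the measure-theoretic setting. Since $f$ and $g$ are close, there is $R \geq 0$ with $d(f(x), g(x)) \leq R$ for all $x \in M$, which I will combine with controlledness to manufacture chains near the multi-diagonal of $N$ from chains near that of $M$.

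Concretely, for $0 \leq i \leq n$, I define Borel maps $\phi_i : M^{n+1} \to N^{n+2}$ by
\[
\phi_i(x_0, \dots, x_n) = \bigl(f(x_0), \dots, f(x_i), g(x_i), g(x_{i+1}), \dots, g(x_n)\bigr),
\]
and set $h(\mu) := \sum_{i=0}^n (-1)^i (\phi_i)_* \mu$ for $\mu \in CX_n(M)$.

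The first task is to check that $h$ actually lands in $CX_{n+1}(N)$. Each $\phi_i$ is proper: the preimage of a bounded product $B_0 \times \cdots \times B_{n+1} \subseteq N^{n+2}$ lies in a product of the bounded sets $f^{-1}(B_j)$, $g^{-1}(B_j)$, and $f^{-1}(B_i) \cap g^{-1}(B_{i+1})$, using properness of $f$ and $g$. Hence $(\phi_i)_* \mu$ is locally finite. Moreover, if $\mu$ is supported in $\Delta_r \subset M^{n+1}$, then the pairwise distances of the coordinates of $\phi_i(x_0, \dots, x_n)$ are uniformly controlled via the controlled moduli of $f$ and $g$ together with the closeness constant $R$, so $(\phi_i)_* \mu$ is supported in $\Delta_S \subset N^{n+2}$ for some $S = S(r, R)$.

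The second and main task is to establish the identity $\partial h + h \partial = g_* - f_*$. For this I would enumerate the compositions $\pi_j \circ \phi_i$ and compare them with $\phi_{i'} \circ \pi_{j'}^M$ for appropriate indices, splitting into the cases $j < i$, $j = i$, $j = i+1$, and $j > i+1$. In the two extremal ranges one gets the usual simplicial identities $\pi_j \phi_i = \phi_{i-1}^{(n-1)} \pi_j^M$ (for $j < i$) and $\pi_j \phi_i = \phi_i^{(n-1)} \pi_{j-1}^M$ (for $j > i+1$), whose contributions to $\partial h$ cancel the corresponding contributions to $h \partial$ after relabelling. The middle cases $j = i$ and $j = i+1$ produce a telescoping sum whose only surviving terms are $g_* \mu$ (from $j = 0$, $i = 0$) and $-f_* \mu$ (from $j = n+1$, $i = n$). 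This is the standard combinatorial bookkeeping for the prism, and presents no difficulty once the index conventions are fixed. Since $h$ is a chain homotopy, $f_*$ and $g_*$ induce identical maps on $HX_\bullet$, completing the proof. The only real subtlety, beyond the indexing, is the verification that pushforward by $\phi_i$ preserves the defining properties of coarse chains, but this follows cleanly from the properness and closeness assumptions as above.
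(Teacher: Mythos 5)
Your proposal is correct and follows essentially the same route as the paper: both use the prism-type maps $\phi_i(x_0,\dots,x_n) = (f(x_0),\dots,f(x_i),g(x_i),\dots,g(x_n))$ and the alternating sum $h = \sum_{i=0}^n (-1)^i (\phi_i)_*$ as an explicit chain homotopy between $f_*$ and $g_*$. The verifications you sketch (properness, support near the multi-diagonal, the simplicial identities) are exactly what justifies the construction in the paper as well.
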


Recall here that two maps $f, g: M \to N$ are close if and only if there exists some $r >0$ such that $d(f(x), g(x)) \leq r$ for each $x \in M$.
A coarse map $f: M \to N$ is a \emph{coarse equivalence} if and only if there exists a coarse map $f' : N \to M$ such that both $f \circ f'$ and $f' \circ f$ are close to the identity. 
The above proposition shows in particular that coarse equivalences induce isomorphisms in coarse homology.

\begin{proof}
Define maps $h_n^i : M^n \longrightarrow N^{n+1}$ by
\[
h_n^i(x_0, \dots, x_{n-1}) = \big(f(x_0), \dots, f(x_i), g(x_i), \dots, g(x_{n-1})\big).
\]
Because $f$ and $g$ are close, $h_n^i$ sends a uniform neighborhood of the multi-diagonal $\Delta \subset M^n$ to some uniform neighborhood of the multi-diagonal $\Delta \subset N^{n+1}$, hence pushforward along each $h_n^i$ sends $CX_{n-1}(M)$ to $CX_{n}(N)$.
A chain homotopy between $f_*$ and $g_*$ is then given in degree $n$ by
\[
h_n := \sum_{i=0}^{n-1} (-1)^i \cdot (h_n^i)_*. \qedhere
\]
\end{proof}

\begin{remark}
In \cite{WulffCoarseCohomology}, the complex giving rise to coarse homology consists of functions vanishing outside some $\Delta_R$ that have locally finite support.
Such a function can be identified with a locally finite sum of Dirac measures, thus giving rise to an element in our complex. This defines a chain map $CX_\bullet^{\mathrm{Wulff}}(M) \to CX_\bullet(M)$, which is in fact a quasi-isomorphism.
This follows because both theories are coarsely invariant (by \cite[Lemma~4.9]{WulffCoarseCohomology} and Prop.~\ref{PropCoarselyInvariant}) and agree on discrete spaces, together with the fact that every proper metric space admits a discretization.
\end{remark}

We will use the following notion of excision in coarse homology.

\newpage

\begin{proposition}[Excision]
\label{PropExcision}
Let $\cX$ be a big family in $M$ and let $ \cZ, \cW$ be big families such that $\cX \Cap \cW \subseteq \cX \Cap \cZ$.
Then the canonical map
\[
HX_n(\cX, \cX \Cap \cZ) \longrightarrow HX_n\big(\cX \Cup \cW, (\cX \Cap \cZ) \Cup \cW\big)
\]
is an isomorphism.
\end{proposition}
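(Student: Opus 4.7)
The plan is to show that the canonical chain map
\[
\Phi_\bullet : CX_\bullet(\cX)/CX_\bullet(\cX \Cap \cZ) \longrightarrow CX_\bullet(\cX \Cup \cW)/CX_\bullet((\cX \Cap \cZ) \Cup \cW)
\]
is already an isomorphism of chain complexes, from which the statement follows by passage to homology. The key technical point is that a coarse chain is supported in a thickening $\Delta_R$ of the multi-diagonal, so if any coordinate of a point in its support lies in a set $Y$, then all coordinates lie in $Y_R$. This, together with the closure of big families under thickenings and the hypothesis $\cX \Cap \cW \subseteq \cX \Cap \cZ$, effectively trivializes the usual subdivision arguments needed in ordinary excision.

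For surjectivity of $\Phi_n$, I would start with $\mu \in CX_n(\cX \Cup \cW)$ supported on $(X \cup W)^{n+1} \cap \Delta_R$ for some $X \in \cX$, $W \in \cW$, and decompose
\[
\mu = \mu|_{X^{n+1}} + \big(\mu - \mu|_{X^{n+1}}\big).
\]
The first summand is clearly in $CX_n(\cX)$. For the second summand, any point in its support has at least one coordinate outside $X$, hence inside $W$, and by the $\Delta_R$ constraint all its coordinates then lie in $W_R \in \cW$. Thus the second summand sits in $CX_n(\cW) \subseteq CX_n((\cX \Cap \cZ) \Cup \cW)$, and $[\mu] = [\mu|_{X^{n+1}}]$ is in the image of $\Phi_n$.

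For injectivity, suppose $\mu \in CX_n(\cX)$, supported on $X^{n+1} \cap \Delta_R$, satisfies $\Phi_n([\mu]) = 0$; then $\mu \in CX_n((\cX \Cap \cZ) \Cup \cW)$, so $\mu$ is supported on $(Z' \cup W)^{n+1}$ for some $Z' = X' \cap Z \in \cX \Cap \cZ$ and $W \in \cW$. I would decompose
\[
\mu = \mu|_{(Z')^{n+1}} + \mu|_{X^{n+1} \cap [(Z'\cup W)^{n+1} \setminus (Z')^{n+1}]}.
\]
The first piece lies in $CX_n(\cX \Cap \cZ)$. For the second, any point in its support has a coordinate lying in $W$ and in $X$, hence in $X \cap W$; the $\Delta_R$ condition places the whole point in $(X \cap W)_R$. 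This is exactly where the hypothesis comes in: $X \cap W$ is a member of $\cX \Cap \cW$, so by assumption it is contained in some $X'' \cap Z''$ with $X'' \in \cX$, $Z'' \in \cZ$, and then
\[
(X \cap W)_R \subseteq (X'' \cap Z'')_R \subseteq X''_R \cap Z''_R \in \cX \Cap \cZ
\]
using closure of big families under thickenings. Thus the second piece also lies in $CX_n(\cX \Cap \cZ)$, and therefore $\mu \in CX_n(\cX \Cap \cZ)$, i.e.\ $[\mu] = 0$.

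The main obstacle I anticipate is purely bookkeeping: making sure that the finite-propagation constraint on chains (support in $\Delta_R$) is used uniformly and that the appropriate $R$ is tracked through the thickenings, so that the argument really takes place at the chain level for each fixed $n$. Once these are handled, no spectral sequence or chain-homotopy argument is required, and the conclusion on coarse homology is immediate.
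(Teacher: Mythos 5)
Your proof is correct and follows essentially the same strategy as the paper's: showing that the quotient chain map is already an isomorphism by using the $\Delta_R$-support condition together with closure of big families under thickenings. The only cosmetic differences are that the paper multiplies $\mu$ by $X \otimes \mathbf{1} \otimes \cdots \otimes \mathbf{1}$ (first coordinate only) rather than restricting to $X^{n+1}$, and for injectivity it argues via the set identity $CX_n(\cX) \cap CX_n((\cX\Cap\cZ)\Cup\cW) = CX_n(\cX\Cap((\cX\Cap\cZ)\Cup\cW)) \subseteq CX_n(\cX\Cap\cZ)$ instead of your explicit two-piece decomposition; both routes lead to the same conclusion via the same key observation.
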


\begin{proof}
The map is induced by the obvious map of quotient chain complexes
\begin{equation}
\label{QuotientMap}
\frac{CX_\bullet(\cX)}{CX_\bullet(\cX \Cap \cZ)} \longrightarrow \frac{CX_\bullet(\cX \Cup \cW)}{CX_\bullet((\cX\Cap\cZ) \Cup \cW)}
\end{equation}
descended from the inclusion $CX_\bullet(\cX)\to CX_\bullet(\cX \Cup \cW)$.
We claim that the map \eqref{QuotientMap} is an isomorphism.
To see surjectivity, let $\mu \in CX_n(\cX \Cup \cW)$ be supported on $X \cup W$ for $X \in \cX$, $W \in \cW$.
Then by a calculation similar to the proof of Prop.~\ref{PropMayerVietoris}, the measure
\begin{equation*}
\tilde{\mu} := (X \otimes \mathbf{1} \otimes \cdots \otimes \mathbf{1}) \cdot \mu
\end{equation*}
is contained in $CX_n(\cX)$ and the difference $\tilde{\mu} - \mu$ is contained in the subspace $CX_n(\cW) \subseteq CX_n((\cX \Cap \cZ)\Cup \cW)$.
Hence $\mu$ is the image of $\tilde{\mu}$ under the quotient map \eqref{QuotientMap}.

To see injectivity, notice that any element of $CX_\bullet(\cX)$ that is mapped to zero under \eqref{QuotientMap} is contained in
\[
CX_n(\cX) \cap CX_n\big((\cX \Cap \cZ)\Cup \cW\big) = CX_n\big(\cX \Cap \big((\cX \Cap \cZ) \Cup \cW\big)\big) \subseteq CX_n(\cX \Cap \cZ),
\]
where for the last inclusion, we used $\cX\Cap \cW \subseteq \cX\Cap \cZ$.
This shows that \eqref{QuotientMap} is injective.
\end{proof}

\begin{remark}
Via \eqref{ColimitFormula}, the above excision statement also implies that
\[
HX_n(\cX, \cX \Cap \cZ) \cong HX_n\big(\cX \cap W, (\cX \Cap \cZ) \cap W\big)
\]	
for  subsets $W\subseteq M$, where $\cX \cap W = \{X \cap W \mid X \in \cX\}$ etc., provided that the condition $\{W\}\Cup (\cX \Cap \cZ) = \{M\}$ holds.
\end{remark}

\subsection{Coarse cohomology and duality}

Dually, we may define coarse cohomology as follows. 
An \emph{$n$-cochain} on $M$ is a locally bounded Borel map $\theta:M^{n+1}\to \C$. 
There is a \emph{coboundary} map $\delta$ taking $n$-cochains to $(n+1)$-cochains, defined as
\begin{equation}
\label{ASDifferential}
\delta\theta = \sum_{i=0}^{n+1} (-1)^i\cdot  \pi_i^*\theta.
\end{equation}
If $\cY$ is a big family in $M$, we denote by $CX^n(\cY)$ the subspace of $n$-cochains $\theta$ that satisfy the following support condition: For each $R >0$ and each member $Y$ of $\cY$, the intersection
\[
  \supp(\theta) \cap \Delta_R \cap Y^{n+1} \qquad \text{is bounded}.
\]
Such cochains are called \emph{coarse on} $\cY$.
In particular, if $\cY = \{M\}$  we just write $CX^n(M)$ for the space of \emph{coarse $n$-cochains}, i.e., those locally bounded Borel functions whose support intersects any uniform neighborhood of the multi-diagonal in a bounded set.
For any big family $\cY$ in $M$, the spaces $\smash{CX^\bullet(\cY)}$ with the Alexander-Spanier differential \eqref{ASDifferential} form a cochain complex.

\begin{definition}[Coarse cohomology]
The cohomology groups of the above cochain complex are called the \emph{coarse cohomology groups} of $\cY$, denoted by $HX^\bullet(\cY)$.
\end{definition}

\begin{remark}
\label{RemarkLim1}
The relation of $HX^n(\cY)$ to the groups $HX^n(Y)$, $Y \in \cY$, is more subtle in the cohomology case.
Here the space $CX^n(\cY)$ can be written as a limit,
\[
CX^n(\cY) = \lim_{Y \in \cY} \, CX^n(Y),
\]
where the connecting maps are given by restriction of cochains.
%
%
Since all the restriction maps are surjective, the Mittag-Leffler condition is satisfied, hence the cohomology of $CX^n(\cY)$ can be calculated with Milnor's exact $\lim^1$-sequence
\[
\begin{tikzcd}[column sep=0.6cm]
0 \ar[r]
&
\displaystyle{\lim_{Y \in \cY}}^{\!1}\, HX^{n-1}(Y) \ar[r] & HX^n(\cY) \ar[r] & \displaystyle\lim_{Y \in \cY} HX^n(Y) \ar[r] & 0.
\end{tikzcd}
\]
In the case that $\cY = \{Y_0\}$ is generated by a single subset, the restriction maps $HX^n(Y') \to HX^n(Y)$ are all isomorphisms for $Y' \supseteq Y \supseteq Y_0$ by coarse invariance of coarse cohomology \cite{WulffCoarseCohomology}, so the $\lim^1$ term vanishes and the restriction map induces isomorphisms
\[
 HX^n(\cY) \cong HX^n(Y)
 \]
 for any $Y \supseteq Y_0$; see \cite[\S1]{HigsonRoeYu}.
 \end{remark}

The support condition ensures that for any given big family $\cY$ in $M$, coarse cochains on $\cY$ are integrable against coarse chains supported on $\cY$, providing a bilinear pairing
\begin{equation}
\label{CoarsePairing}
CX^n(\cY) \times CX_n(\cY) \longrightarrow \C, \qquad \langle\theta, \mu\rangle = \int \theta d \mu.
\end{equation}
By the transformation formula, we have
\begin{equation*}
  \langle \delta \theta, \mu\rangle = \langle \theta, \partial \mu\rangle, 
\end{equation*}
hence this pairing descends to a pairing between coarse homology and coarse cohomology.

It will be important below that  coarse cohomology can also be computed using the anti-symmetrization of the relevant cochain complexes.
Explicitly, for a big family $\cY$ in $M$, we denote by $CX^\bullet_\alpha(\cY) \subset CX^\bullet(\cY)$ the subcomplex consisting of \emph{anti-symmetric} coarse cochains, i.e., those cochains $\theta \in CX^\bullet(\cY)$ satisfying
\[
\theta(x_{\sigma_0}, \dots, x_{\sigma_n}) = \sgn(\sigma) \cdot \theta(x_0, \dots, x_n)
\]
for any permutation $\sigma$.
Particular examples of anti-symmetric $n$-cochains are the functions
\[
f_0 \wedge \cdots \wedge f_n = \sum_{\sigma \in S_{n+1}} \sgn(\sigma) \cdot f_{\sigma_0} \otimes \cdots \otimes f_{\sigma_n}
\]
for bounded Borel functions $f_0, \dots, f_n$ on $M$ (of course, to give an element of $CX_\alpha^n(\cY)$, the wedge product also has to satisfy the relevant support condition).
On these particular cochains, the Alexander-Spanier differential takes the particularly simple form
\begin{equation}
\label{SimpleASDifferential}
\delta(f_0 \wedge \cdots \wedge f_n)  = \mathbf{1} \wedge f_0 \wedge \cdots \wedge f_n.
\end{equation}

By \cite[Prop.~3.26]{Roe-cohom}, the inclusion \begin{equation}
\label{InclusionAnti}
 	CX^\bullet_\alpha(\cY) \to CX^\bullet(\cY)
\end{equation}
 is a quasi-isomorphism, and so the anti-symmetric complex also computes the coarse cohomology groups.
%
%

Dually, we define the chain complex $CX^\alpha_\bullet(\mathcal{Y})$ as the quotient of $CX_\bullet(\mathcal{Y})$ where two measures $\mu_1$, $\mu_2$ are identified if they arise from each other by a signed permutation of the variables.
Equivalently, they are identified if $\mu_1$ and $\mu_2$ have the same pairing with any anti-symmetric coarse cochain.

In the homology case, we do \emph{not} know whether the quotient map
\begin{equation}
\label{MapToAntisymmetricQuotient}
CX_\bullet(\mathcal{Y}) \longrightarrow CX_\bullet^\alpha(\mathcal{Y}) 
\end{equation}
is a quasi-isomorphism; Roe's proof does not generalize to the homology case.
Nevertheless, the \emph{anti-symmetrization} chain map
\begin{equation}
\label{AntiSymmetrization}
\mathrm{A}(\mu) : = \frac{1}{(n+1)!} \sum_{\sigma \in S_{n+1}} \mathrm{sgn}(\sigma) \cdot \sigma_*\mu
\end{equation}
provides a right inverse for the quotient \eqref{MapToAntisymmetricQuotient}, which identifies $CX_\bullet^\alpha(\mathcal{Y})$ with its image in $CX_\bullet(\cY)$. 
This provides a direct sum decomposition 
\begin{equation}
\label{DirectSumCoarseHomology}
CX_\bullet(\mathcal{Y}) \cong CX_\bullet^\alpha(\mathcal{Y}) \oplus \ker(\mathrm{A})_\bullet,
\end{equation}
and, correspondingly, a direct sum decomposition of the homology groups. Throughout, we often identify elements of $CX_\bullet^\alpha(\mathcal{Y})$ with their image in $CX_\bullet(\mathcal{Y})$ under the anti-symmetrization map.

Two measures $\mu_1$ and $\mu_2$ are identified in the quotient $CX_\bullet^\alpha(\mathcal{Y})$ if  and only if they pair identically with any anti-symmetric coarse cochain using \eqref{CoarsePairing}.
This characterization shows that the pairing \eqref{CoarsePairing} descends to a pairing 
\[
CX^n_\alpha(\mathcal{Y}) \times CX_n^\alpha(\mathcal{Y}) \longrightarrow \mathbb{C},
\]
which can be computed by choosing an arbitrary representative of the equivalence class in the quotient complex.
We do not know if the second summand on the right hand side of \eqref{DirectSumCoarseHomology} has trivial homology, but since by Roe's result, any coarse cochain has an anti-symmetric representative, elements of this summand pair trivially with any coarse cochain.

\subsection{The Mayer-Vietoris sequence in coarse homology}
\label{SectionMCCohomology}

In this section, we discuss the Mayer-Vietoris sequence for a decomposition of a proper metric space $M$.
This is rather standard material (see, e.g., \cite{HigsonRoeYu}) but does not appear in the literature in our formulation, using big families and the measure theoretic description of coarse homology, so we recall the relevant material in this language.

\begin{proposition}
\label{PropMayerVietoris}
Let $\cX$, $\cY$ be two big families in $M$.
Then there are short exact sequences of chain complexes
\[
\begin{tikzcd}[column sep=0.5cm, row sep=0.4cm]
0 \ar[r]
&
CX_\bullet(\cX \Cap \cY) \ar[r]
&
CX_\bullet(\cX) \oplus CX_\bullet(\cY)
\ar[r]
&
CX_\bullet(\cX \Cup \cY)
\ar[r]
&
0
\end{tikzcd}
\]
where the first map is the diagonal inclusion, while the second map is given by
\[
(\mu_1, \mu_2) \longmapsto \mu_1 -  \mu_2.
\]
Similarly, we get a short exact sequence when replacing each complex by the quotient complex of anti-symmetric coarse chains.
\end{proposition}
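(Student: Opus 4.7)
The plan is to establish exactness at each of the three nontrivial positions of the proposed sequence, and then deduce the anti-symmetric variant. Injectivity of the diagonal inclusion and vanishing of the composition are immediate from the definitions.

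For exactness at the middle, suppose $(\mu_1,\mu_2)$ maps to zero, so $\mu_1=\mu_2$. Choose $X\in\cX$ with $\mathrm{supp}(\mu_1)\subseteq X^{n+1}$ and $Y\in\cY$ with $\mathrm{supp}(\mu_2)\subseteq Y^{n+1}$. Then the common measure is supported on $(X\cap Y)^{n+1}$, so lies in $CX_\bullet(\cX\Cap\cY)$ and maps diagonally to $(\mu_1,\mu_2)$. For surjectivity, let $\nu\in CX_n(\cX\Cup\cY)$, assumed supported on $(X\cup Y)^{n+1}\cap\Delta_R$ for some $X\in\cX$, $Y\in\cY$, $R>0$. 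In analogy with the construction in the proof of Proposition~\ref{PropExcision}, set
\begin{equation*}
\mu_1 := (X\otimes \mathbf{1}\otimes\cdots\otimes \mathbf{1})\cdot\nu,\qquad \mu_2 := -((\mathbf{1}-X)\otimes \mathbf{1}\otimes\cdots\otimes \mathbf{1})\cdot\nu,
\end{equation*}
so that $\mu_1-\mu_2=\nu$ tautologically. Because $\nu$ is supported in $\Delta_R$, any point in $\mathrm{supp}(\mu_1)$ has first coordinate in $X$, which forces all remaining coordinates to lie in $X_R$; hence $\mathrm{supp}(\mu_1)\subseteq X_R^{n+1}$ and $X_R\in\cX$. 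Analogously, a point in $\mathrm{supp}(\mu_2)$ has first coordinate in $(X\cup Y)\setminus X\subseteq Y$, and all remaining coordinates within distance $R$, so $\mathrm{supp}(\mu_2)\subseteq Y_R^{n+1}$ with $Y_R\in\cY$. Local finiteness is preserved by multiplication with bounded Borel functions, so $\mu_1\in CX_n(\cX)$ and $\mu_2\in CX_n(\cY)$ as required.

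For the anti-symmetric version, the diagonal and difference maps are manifestly equivariant with respect to coordinate permutations, so they descend to the anti-symmetric quotient complexes. Exactness is then inherited from the non-anti-symmetric case by applying the natural chain-map retraction $\mathrm{Alt} = \tfrac{1}{(n+1)!}\sum_\sigma \sgn(\sigma)\sigma_*$ onto the anti-symmetric subcomplex. Concretely, given anti-symmetric $\nu$, the decomposition $\nu=\mathrm{Alt}(\mu_1)-\mathrm{Alt}(\mu_2)$ does the job, since each set $X_R^{n+1}$, $Y_R^{n+1}$ is invariant under the symmetric group action and hence $\mathrm{Alt}(\mu_i)$ remains supported in the required big family.

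The only nontrivial point, and the one that ties the statement genuinely to coarse geometry, is the observation underlying surjectivity: cutting a coarse chain by an indicator in a \emph{single} coordinate yields a chain supported in the $R$-thickening of that indicator set in \emph{all} coordinates. This uses in an essential way both the $\Delta_R$ support condition built into the definition of $CX_\bullet$ and the closure of big families under thickenings; without these, one would only obtain a decomposition in $CX_\bullet(M)$, not one respecting the big-family structure.
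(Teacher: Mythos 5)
Your proof is correct and its core argument for surjectivity --- cutting the chain $\nu$ by the indicator $X$ in the first coordinate and using the $\Delta_R$-support to propagate the localization to the remaining coordinates --- is the same as the paper's. The decomposition you use ($\mu_1 = (X \otimes \mathbf{1} \otimes \cdots) \cdot \nu$, $\mu_2 = \mu_1 - \nu$) differs only in that the paper cuts by a thickening $X_S$ for large $S$ rather than by $X$ itself; your direct version works just as well, since a point of $\supp(\mu_2)$ has first coordinate in $X^c \cap (X \cup Y) \subseteq Y$ already. (A small quantitative slip: with the paper's metric on $M^{n+1}$, membership in $\Delta_R$ forces the remaining coordinates into $X_{2R}$, not $X_R$; this does not affect the conclusion.)

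Where you diverge more substantially from the paper is in the anti-symmetric case. The paper replaces the multiplier by its symmetrization $\frac{1}{n+1}\sum_i \mathbf{1}^{\otimes i} \otimes X_S \otimes \mathbf{1}^{\otimes(n-i)}$, so that the map $\mu \mapsto \mu_1$ is manifestly well-defined on the quotient complex $CX_\bullet^\alpha$; this makes the argument self-contained and works directly at the level of the quotient. You instead decompose an arbitrary lift, then apply the anti-symmetrizer $\mathrm{Alt}$, implicitly using the identification of the quotient $CX_\bullet^\alpha$ with the subspace of anti-symmetric measures. Both routes are standard; yours is conceptually cleaner but relies on that identification (which the paper does establish elsewhere). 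A cosmetic note: the paper's $CX_\bullet^\alpha$ is by definition a \emph{quotient}, not a subcomplex, so calling $\mathrm{Alt}$ a ``retraction onto the anti-symmetric subcomplex'' is a slight abuse of language; what you actually need is only that $\mathrm{Alt}$ gives a degree-wise linear splitting compatible with permutation-invariant supports such as $X_{R}^{n+1}$, which is exactly what you verify.
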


\begin{proof}

Injectivity of the left map is clear.

To see surjectivity of the right map, let $\mu \in CX_n(\cX \Cup \cY)$ be supported on $(X \cup Y)^{n+1} \cap \Delta_R$ for members $X \in \cX$, $Y \in \cY$ and $R \geq 0$.
Then 
\[
\mu_1 := \big(X_{S} \otimes {\underbrace{\mathbf{1} \otimes \cdots \otimes \mathbf{1}}_{n}}\big) \cdot \mu
\] 
is contained in $CX_n(\cX)$ for any $S \geq 0$, as $\mu_1$ vanishes outside
\[
X_S \times M^n \cap \Delta_R \subseteq X_{S+R}^{n+1} \cap \Delta_R.
\] 
The difference $\mu_2 := \mu_1 - \mu$ is given by the same formula, but with $-X_S^c$ replacing $X_S$.
The same calculation shows that $\mu_2$ is supported on $(X_S^c)_R \cap (X \cup Y)$, which is contained in $Y$ if $S$ is large enough. 
Hence for such a choice of $S$, we have $\mu_2 \in CX_n(\cY)$.
This shows that $\mu = \mu_1 - \mu_2$ is contained in the image of the right map.

%
%

To see exactness in the middle, we observe that since the second map is $(\mu_1, \mu_2) \mapsto \mu_1 - \mu_2$, elements in the kernel must satisfy $\mu_1 = \mu_2 =:\mu $.
Exactness in the middle therefore follows from the obvious identity
\[
  CX_n(\cX) \cap CX_n(\cY) = CX_n(\cX \Cap \cY). 
\]
%
%
The argument above also works for the  anti-symmetric quotient complexes. 
We only need to modify $\mu_1$ to 
\[
\mu_1 = \frac{1}{n+1}\sum_{i=0}^n  \,({\underbrace{\mathbf{1} \otimes \cdots \otimes \mathbf{1}}_i} \otimes X_S \otimes {\underbrace{\mathbf{1} \otimes \cdots \otimes \mathbf{1}}_{n-i}}) \cdot \mu.
\]
Then if $\mu'$ is obtained from $\mu$ by a signed permutation, then $\mu_1'$ and $\mu_1$ define the same element in the quotient $CX_n^\alpha(\cY)$. 
So the construction is well-defined on the quotient.
\end{proof}

By the general principle of  homological algebra, the above proposition provides, for any pair $\cX$, $\cY$ of big families in $M$, a long exact sequence in homology,
\[
\hspace{-0.1cm}
\begin{tikzcd}[column sep=0.7cm]       
\cdots \arrow{r}  
&[0.3cm]
HX_n(\cX\Cap \cY) \ar[r] 
\arrow[phantom, "\partial_{\mathrm{MV}}"{coordinate, name=Z}]{d}
&
HX_n(\cX) \oplus HX_n(\cY) \ar[r]  &
HX_n(\cX \Cup \cY) 
\ar[d, near start, shift left=23, phantom, "\partial_{\mathrm{MV}}"]
  \arrow[
    rounded corners,
    to path={
      -- ([xshift=2ex]\tikztostart.east)
      |- (Z) [near end]\tikztonodes
      -| ([xshift=-2ex]\tikztotarget.west)
      -- (\tikztotarget)
    }
  ]{dll} 
  \\
 &
 HX_{n-1}(\cX \Cap \cY) \arrow{r} & HX_{n-1}(\cX) \oplus HX_{n-1}(\cY) \ar[r] &  HX_{n-1}(\cX \Cup \cY) \ar[r] &
\cdots &
\end{tikzcd}
\]

\begin{remark}
\label{RemarkAlternativeMVboundary}
Below, we will use the following description of the Mayer-Vietoris boundary map $\partial_{\mathrm{MV}}$, using a version of the excision isomorphism in coarse homology.
Consider the commutative diagram 
\[
\begin{tikzcd}[column sep=0.5cm, row sep=0.7cm]
0 \ar[r]
&
CX_\bullet(\cX \Cap \cY) \ar[r]
\ar[dd, equal]
&
CX_\bullet(\cX) \oplus CX_\bullet(\cY)
\ar[r]
\ar[dd]
&
CX_\bullet(\cX \Cup \cY)
\ar[r]
\ar[d]
&
0
\\
&&&
\displaystyle\frac{CX_\bullet(\cX\Cup \cY)}{CX_\bullet(\cY)}
\\
0 \ar[r]
&
CX_\bullet(\cX \Cap \cY) \ar[r]
&
CX_\bullet(\cX)
\ar[r]
&
\displaystyle\frac{CX_\bullet(\cX)}{CX_\bullet(\cX \Cap \cY)}
\ar[r]
\ar[u, "\cong"]
&
0,
\end{tikzcd}
\]
where the bottom right vertical map is the excision isomorphism \eqref{QuotientMap}.
%
%
By naturality of boundary maps, we see that the Mayer-Vietoris boundary map may be written as the composition
\[
\begin{tikzcd}[column sep=0.5cm]
\partial_{\mathrm{MV}} : HX_n(\cX \Cup\cY) \ar[r]
&
HX_n(\cX \Cup \cY, \cY) \cong HX_n(\cX, \cX \Cap \cY)
\ar[r, "\partial"]
&
HX_{n-1}(\cX \Cap \cY),
\end{tikzcd}
\]
where the first map is the quotient map, in the middle, we used the excision isomorphism from Prop.~\ref{PropExcision} and the right map is the boundary map for the bottom short exact sequence.
In the above chain level description of $\partial_{\mathrm{MV}}$, we may also replace each occurrence of $CX_\bullet$ by the anti-symmetric version $CX_\bullet^\alpha$.
\end{remark}

\subsection{The Mayer-Vietoris sequence in coarse cohomology}

The Mayer-Vietoris sequence in coarse cohomology is more subtle. 
We start with the following lemma.

\begin{lemma}
\label{LemmaCohHalfMV}
Let $\cX$ and $\cY$ be two big families in a proper metric space $M$.
Then the sequence of cochain complexes
\begin{equation}
\label{SESCohomology}
\begin{tikzcd}[column sep=0.8cm]
0 \ar[r]
&
CX^\bullet(\cX \Cup \cY) \ar[r]
&
CX^\bullet(\cX) \oplus CX^\bullet( \cY)
\ar[r]
&
CX^\bullet(\cX \Cap \cY)
\end{tikzcd}
\end{equation}
is exact, where the maps are dual to those of the homological Mayer-Vietoris sequence.
\end{lemma}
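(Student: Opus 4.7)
The plan is to verify exactness directly at both spots by unwinding the definitions. The two maps should be the diagonal inclusion $\alpha : CX^\bullet(\cX \Cup \cY) \to CX^\bullet(\cX) \oplus CX^\bullet(\cY)$, $\theta \mapsto (\theta, \theta)$, which is well-defined because the support condition relative to the larger family $\cX \Cup \cY$ is strictly stronger than that relative to $\cX$ or $\cY$ alone; and the signed difference $\beta : CX^\bullet(\cX) \oplus CX^\bullet(\cY) \to CX^\bullet(\cX \Cap \cY)$, $(\theta_1, \theta_2) \mapsto \theta_1 - \theta_2$, well-defined because cochains coarse on $\cX$ or $\cY$ are automatically coarse on the smaller family $\cX \Cap \cY$. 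Injectivity of $\alpha$ is immediate, and $\beta \circ \alpha = 0$ is trivial.

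The substantive step is the reverse inclusion $\ker \beta \subseteq \im \alpha$. Given $(\theta_1, \theta_2) \in \ker \beta$, the equality $\theta_1 = \theta_2$ holds as Borel functions on $M^{n+1}$; writing $\theta := \theta_1 = \theta_2$, I must verify that $\theta$ satisfies the stronger support condition for $\cX \Cup \cY$. The key geometric observation is as follows: fix $X \in \cX$, $Y \in \cY$, and $R > 0$, and consider a tuple $(x_0, \ldots, x_n) \in \Delta_R \cap (X \cup Y)^{n+1}$. Since the tuple is in $\Delta_R$, there is a common centre $y \in M$ with $d(x_i, y) \leq R$ for all $i$. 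If some coordinate happens to lie in $X$, then $y \in X_R$ and hence every $x_i$ lies in $X_{2R}$; otherwise all coordinates already lie in $Y$. This yields the pointwise inclusion
\[
\Delta_R \cap (X \cup Y)^{n+1} \subseteq \big(\Delta_R \cap (X_{2R})^{n+1}\big) \cup \big(\Delta_R \cap Y^{n+1}\big).
\]
By closure of big families under thickenings, $X_{2R} \in \cX$. The hypotheses $\theta \in CX^n(\cX)$ and $\theta \in CX^n(\cY)$ then imply that intersecting both sides with $\supp(\theta)$ produces a bounded set, giving $\theta \in CX^n(\cX \Cup \cY)$.

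The only obstacle, a minor one, is spotting the right use of closure of big families under thickenings to absorb the extra $R$-neighborhood that arises when relating $X \cup Y$ to members of $\cX$ and $\cY$; without this feature of big families, the support conditions for $\cX$ and $\cY$ would not automatically combine into one for $\cX \Cup \cY$.
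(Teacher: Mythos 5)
Your proof is correct and follows essentially the same route as the paper's: injectivity of the diagonal is immediate, and exactness in the middle reduces to the equality $CX^n(\cX) \cap CX^n(\cY) = CX^n(\cX \Cup \cY)$, with the non-trivial inclusion established by the same geometric observation that a point of $\Delta_R \cap (X \cup Y)^{n+1}$ has all its coordinates in $X_{2R}$ as soon as one coordinate lies in $X$.
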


%

\begin{proof}
Injectivity of the left map is clear, as $CX^\bullet(\cX \Cup \cY)$ is contained in both $CX^\bullet(\cX)$ and $CX^\bullet(\cY)$.

Exactness in the middle follows from the equality
\[
  CX^n(\cX) \cap CX^n(\cY) = CX^n(\cX \Cup \cY).
\]
for general big families $\cX$ and $\cY$.
The inclusion $(\supseteq)$ is clear.
To see the inclusion $(\subseteq)$, we observe that for $X \in \cX$, $Y \in \cY$ and any $R \geq 0$, we have
\[
\Delta_R \cap (X \cup Y)^{n+1} \subseteq \Delta_R \cap \big( X_{2R}^{n+1} \cup Y_{2R}^{n+1}) 
\]
Therefore, a cochain that is both coarse on $X$ and coarse on $Y$ is coarse on $X \cup Y$.
\end{proof}

In contrast to the case of homology, we do not know whether the right map in \eqref{SESCohomology} is surjective in general. 
However, we have the following lemma.

\begin{lemma}
\label{LemmaLocalization}
Let $\cX$, $\cY$ be big families in $M$ and let $X$ be a Borel member of $\cX$. 
Then given any  $\theta \in CX^n(\cX \Cap \cY)$, the cochain
\[
\tilde{\theta}^c = (X \otimes {\underbrace{\mathbf{1} \otimes \cdots \otimes \mathbf{1}}_n})\cdot \theta
\]
is contained in $CX^n(\cY)$.
\end{lemma}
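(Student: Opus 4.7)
The plan is to unpack the definitions and show directly that $\tilde{\theta}^c$ satisfies the support condition required for membership in $CX^n(\cY)$. Concretely, I need to verify: for every $R>0$ and every $Y \in \cY$, the set
\[
\supp(\tilde{\theta}^c) \cap \Delta_R \cap Y^{n+1}
\]
is bounded. The key observation is that the factor $X \otimes \mathbf{1}\otimes\cdots\otimes\mathbf{1}$ forces the first coordinate to lie in $X$, and proximity to the multi-diagonal then pulls \emph{all} coordinates into a thickening of $X$.

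First I would fix $R>0$ and $Y \in \cY$ and take an arbitrary point $(x_0,\dots,x_n) \in \supp(\tilde{\theta}^c) \cap \Delta_R \cap Y^{n+1}$. By the formula for $\tilde{\theta}^c$, the first coordinate $x_0$ must lie in $X$, and by the $\Delta_R$ condition we have $d(x_0, x_i) \leq R$ for each $i=1,\dots,n$. Hence $x_i \in X_R$ for all $i$ (including $i=0$, since $X \subseteq X_R$), and combined with $x_i \in Y$ this yields
\[
(x_0,\dots,x_n) \in \supp(\theta) \cap \Delta_R \cap (X_R \cap Y)^{n+1}.
\]
Next I would invoke the hypothesis that $\cX$ is a big family, closed under $R$-thickenings: since $X\in\cX$ we get $X_R \in \cX$, so $X_R \cap Y$ is a member of $\cX \Cap \cY$. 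Because $\theta \in CX^n(\cX\Cap\cY)$, the defining support condition applied to the member $X_R\cap Y$ yields that $\supp(\theta)\cap \Delta_R \cap (X_R \cap Y)^{n+1}$ is bounded. The displayed inclusion above then shows that $\supp(\tilde{\theta}^c)\cap \Delta_R \cap Y^{n+1}$ is bounded as well, completing the verification.

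Since $\tilde{\theta}^c$ is manifestly a locally bounded Borel function on $M^{n+1}$ (it is the pointwise product of $\theta$ with an indicator-type Borel function), this suffices to conclude $\tilde{\theta}^c \in CX^n(\cY)$. There is no real obstacle here: the argument is a one-line application of the triangle inequality combined with the closure of $\cX$ under thickenings, and the main content is simply choosing the correct member $X_R\cap Y$ of $\cX\Cap\cY$ to apply the coarseness hypothesis on $\theta$.
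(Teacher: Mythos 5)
Your proof follows essentially the same route as the paper's: force the first coordinate into $X$, use the triangle inequality to pull all coordinates into a thickening of $X$, then invoke the coarseness hypothesis on $\theta$ applied to a member of $\cX \Cap \cY$. One small quantitative slip: since $\Delta_R$ is the $R$-thickening of the multi-diagonal in the sup-metric on $M^{n+1}$, a point $x=(x_0,\dots,x_n)\in\Delta_R$ lies within distance $R$ of some diagonal point $(p,\dots,p)$, and the triangle inequality through $p$ gives $d(x_0,x_i)\leq 2R$, not $\leq R$; so the correct thickening is $X_{2R}$ rather than $X_R$. This changes nothing of substance, since $X_{2R}$ is still a member of $\cX$ by closure of big families under arbitrary thickenings, but the inequality as written is off by a factor of two.
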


\begin{proof}
Let $Y$ be a member of $\cY$ and let $R \geq 0$.
Suppose that $x = (x_0, \dots, x_n)$ is contained in $\supp(\tilde{\theta}^c)\cap \Delta_R \cap Y^{n+1}$.
Then $x_0 \in X$, so from $x \in \Delta_R$, it follows that $x_i \in X_{2R}$ for each $i =1, \dots, n$.
So
\[
\supp(\tilde{\theta}^c)\cap \Delta_R \cap Y^{n+1} \subseteq \supp({\theta})\cap \Delta_R \cap (Y\cap X_{2R})^{n+1},
\]
which is bounded by the assumption that $\theta \in CX^n(\cX \Cap \cY)$.
This verifies the support condition for $\tilde{\theta}^c$ to be a member of $CX^n(\cY)$.
\end{proof}

\vspace{3mm}

\begin{proposition}
\label{PropMVXXc}
Let $\cZ$ be a big family in $M$ and let $X \subseteq M$ be a Borel subset.
Then we have short exact sequences of cochain complexes
\[
\begin{tikzcd}[column sep=0.5cm, row sep=0.4cm]
0 \ar[r]
&
CX^\bullet(\cZ) \ar[r]
&
CX^\bullet(\{X\}\Cap\cZ ) \oplus CX^\bullet( \{X^c\}\Cap\cZ)
\ar[r]
&
CX^\bullet(\partial X \Cap \cZ)
\ar[r]
&
0.
\end{tikzcd}
\] 
\end{proposition}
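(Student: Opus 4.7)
The plan is to bootstrap from Lemma~\ref{LemmaCohHalfMV} applied to the big families $\cX := \{X\}\Cap\cZ$ and $\cY := \{X^c\}\Cap\cZ$. First I would verify the two identifications $\cX \Cup \cY = \cZ$ and $\cX \Cap \cY = \partial X \Cap \cZ$. The second is immediate from the definition of $\partial X$ together with associativity of $\Cap$. For the first, observe that any $Z \in \cZ$ decomposes as $Z = (X \cap Z) \cup (X^c \cap Z)$, with the two pieces lying in $\cX$ and $\cY$ respectively, so $\cZ \subseteq \cX \Cup \cY$; conversely $\cX, \cY \subseteq \cZ$ by the subset-closure of $\cZ$, and $\cZ$ is closed under finite unions, yielding $\cX \Cup \cY \subseteq \cZ$. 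With these identifications, Lemma~\ref{LemmaCohHalfMV} immediately furnishes injectivity at $CX^\bullet(\cZ)$ and exactness in the middle. All that remains is surjectivity of the right map.

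For the surjectivity, I would start from the trivial identity $X + X^c = \mathbf{1}$ on $M$ and, given $\theta \in CX^n(\partial X \Cap \cZ)$, form the splitting $\theta = \theta_X + \theta_{X^c}$ where
\[
\theta_X := (X \otimes \mathbf{1} \otimes \cdots \otimes \mathbf{1}) \cdot \theta, \qquad \theta_{X^c} := (X^c \otimes \mathbf{1} \otimes \cdots \otimes \mathbf{1})\cdot \theta.
\]
Now I would apply Lemma~\ref{LemmaLocalization} twice. The first application, with $\cX = \{X\}$, $\cY = \{X^c\}\Cap\cZ$, and Borel member $X \in \cX$, shows that $\theta_X \in CX^n(\{X^c\}\Cap\cZ)$; the second application, with $\cX = \{X^c\}$, $\cY = \{X\}\Cap\cZ$, and Borel member $X^c \in \cX$, shows that $\theta_{X^c} \in CX^n(\{X\}\Cap\cZ)$. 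Both applications require $\theta \in CX^n(\cX \Cap \cY) = CX^n(\partial X \Cap \cZ)$, which is our hypothesis. Then the pair $(\theta_{X^c}, -\theta_X) \in CX^n(\{X\}\Cap\cZ) \oplus CX^n(\{X^c\}\Cap\cZ)$ maps under $(\theta_1, \theta_2) \mapsto \theta_1 - \theta_2$ to $\theta_{X^c} + \theta_X = \theta$, proving surjectivity.

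The main obstacle is precisely the support-theoretic fact that cutting off the first coordinate of $\theta$ by $X$ or $X^c$ trades one half of the boundary-family support condition for the other, i.e.\ that $\theta_X$ really is coarse on $\{X^c\}\Cap\cZ$ rather than merely on $\partial X \Cap\cZ$. This is exactly what Lemma~\ref{LemmaLocalization} provides (the bound $x_0 \in X$ combined with closeness to the multi-diagonal forces the other coordinates into $X_{2R}$), so once that lemma is invoked the rest is routine. Note that we never need $\theta_X$ and $\theta_{X^c}$ to assemble into a chain map; exactness of the sequence of cochain complexes is a degree-wise statement and the splitting only has to exist pointwise in each degree.
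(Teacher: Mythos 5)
Your proof is correct and follows essentially the same route as the paper: reduce to Lemma~\ref{LemmaCohHalfMV} for injectivity and middle exactness, then split $\theta$ by multiplying the first tensor factor with $X$ and $X^c$ and invoke Lemma~\ref{LemmaLocalization} twice to place the two pieces in the right localized complexes. Your additional verification that $\bigl(\{X\}\Cap\cZ\bigr)\Cup\bigl(\{X^c\}\Cap\cZ\bigr)=\cZ$ and $\bigl(\{X\}\Cap\cZ\bigr)\Cap\bigl(\{X^c\}\Cap\cZ\bigr)=\partial X\Cap\cZ$ is a worthwhile detail that the paper leaves implicit; the only discrepancy is a sign convention in the middle map (the paper's maps, being dual to the diagonal $\mu\mapsto(\mu,\mu)$, send $(\theta_1,\theta_2)\mapsto\theta_1+\theta_2$), which does not affect exactness.
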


\begin{proof}
By Lemma~\ref{LemmaCohHalfMV}, applied with $\cX = \{X\} \Cap \cZ$ and $\cY = \{X^c\} \Cap \cZ$, it suffices to show that the right map is surjective.
In other words, we need to show that any cochain $\theta$ in $CX^n(\partial X \Cap \cZ)$ can be written as $\theta = \tilde{\theta} + \tilde{\theta}^c$ with $\tilde{\theta} \in CX^n(\{X\} \Cap \cZ)$ and $\tilde{\theta}^c \in CX^n(\{X^c\} \Cap \cZ)$. 
By Lemma~\ref{LemmaLocalization}, the choices
\begin{equation*}
\tilde{\theta} = (X^c \otimes \mathbf{1} \otimes \cdots \otimes \mathbf{1} ) \cdot \theta, \qquad \text{and} \qquad \tilde{\theta}^c = (X \otimes \mathbf{1} \otimes \cdots \otimes \mathbf{1} ) \cdot \theta
\end{equation*}
achieve this.
%
%
\end{proof}

The above proposition yields a long exact sequence
\[
\begin{tikzcd}[column sep=0.5cm]       
\cdots \arrow{r}  
&[0.3cm]
HX^n(\cZ)
\ar[d, phantom, shift right=20, "\delta_{\mathrm{MV}}", near end]
\ar[r]
\arrow[phantom, ""{coordinate, name=Z}]{d}
&
HX^n(\{X\} \Cap \cZ) \oplus HX^n(\{X^c\} \Cap \cZ) \ar[r]  &[-0.1cm]
HX^n(\partial X \Cap \cZ) 
  \arrow[
    rounded corners,
    to path={
      -- ([xshift=2ex]\tikztostart.east)
      |- (Z) [near end]\tikztonodes
      -| ([xshift=-2ex]\tikztotarget.west)
      -- (\tikztotarget)
    }
  ]{dll} 
  \\
 &
 HX^{n+1}(\cZ) \arrow{r} & HX^{n+1}(\{X\} \Cap \cZ) \oplus HX^{n+1}(\{X^c\} \Cap \cZ) \ar[r] &  
\cdots \qquad\qquad &
\end{tikzcd}
\]
in coarse cohomology.
Moreover, the sequence is dual to the homological Mayer-Vietoris sequence, in the sense that we have the duality
\begin{equation}
\label{DualityMVdifferentials}
  \langle \delta_{\mathrm{MV}}\theta, \mu\rangle = \langle \theta, \partial_{\mathrm{MV}} \mu\rangle
\end{equation}
of the Mayer-Vietoris differentials.

\begin{remark}
\label{RemarkAntiSymmetricVersion}
If $\cZ$ is a big family and $X$ is a Borel subset, we may give an explicit formula for the Mayer-Vietoris boundary 
\[
\delta_{\mathrm{MV}} : HX^n(\partial X \Cap \cZ) \longrightarrow HX^{n+1}(\cZ),
\]
as this is the boundary map for the long exact sequence in cohomology obtained from the short exact sequence of cochain complexes 
\[
\begin{tikzcd}[column sep=0.5cm, row sep=0.4cm]
0 \ar[r]
&
CX^\bullet_\alpha(\cZ) \ar[r]
&
CX^\bullet_\alpha(\{X\}\Cap\cZ ) \oplus CX^\bullet_\alpha( \{X^c\}\Cap\cZ)
\ar[r]
&
CX^\bullet_\alpha(\partial X \Cap \cZ)
\ar[r]
&
0.
\end{tikzcd}
\] 
Namely, for any $\theta \in CX^n_\alpha(\partial X \Cap \cZ)$, we may write $\theta = \tilde{\theta} + \tilde{\theta}^c$, where $\tilde{\theta}^c \in CX^n_\alpha(\{X^c\} \Cap \cZ)$ is defined by the formula 
\begin{equation}
\label{AntisymmetricLocalization}
\tilde{\theta}^c := \frac{1}{n+1}\sum_{i=0}^n  \,({\underbrace{\mathbf{1} \otimes \cdots \otimes \mathbf{1}}_i} \otimes X \otimes {\underbrace{\mathbf{1} \otimes \cdots \otimes \mathbf{1}}_{n-i}}) \cdot \theta,
\end{equation}
 and $\tilde{\theta} \in CX^n_\alpha(\{X\} \Cap \cZ)$ is defined by the same expression but with $X$ replaced by $X^c$.
 It follows from Lemma~\ref{LemmaLocalization} that these cochains have the desired support properties.
By general principles of homological algebra, the boundary map is therefore given by the formula
\begin{equation*}
\delta_{\mathrm{MV}}([\theta]) = [\delta(\tilde{\theta})] = -[\delta(\tilde{\theta}^c)], 
\end{equation*}
where on the right hand side, $\delta$ is the Alexander-Spanier differential.
From the exactness of the sequence of cochain complexes, it follows that $\delta(\tilde{\theta})$ and $-\delta(\tilde{\theta}^c)$  agree and are actually contained in $CX^{n+1}_\alpha(\cZ)$.
\end{remark}

\section{$K$-theory, cyclic homology and the Chern character}
\label{Section4}

In this section, we give a recollection of the tools we use for the proof of our main integrality results: $K$-theory, cyclic homology, the Chern character and coarse character maps.

\subsection{Cyclic cohomology and the coarse character map}
\label{SectionCyclicHomology}

A standard textbook reference on cyclic homology is Loday's book \cite{Loday}.
The coarse character map to coarse homology was defined first by Roe \cite{Roe-cohom} as a map in the dual direction (sending coarse \emph{co}homology to cyclic \emph{co}homology). 
As we are interested in associating coarse homology classes to general operators from $\sB(M)$, not just those coming from a Dirac index (which Roe was focussing on), the homological definition will be more convenient and natural; compare also \cite[\S3.3]{Engel}, where this was considered before.

Let $\sA$ be an algebra over $\C$.

\paragraph{Cyclic homology.}

We write $C_n(\sA) = \sA^{\otimes(n+1)}$, which is a chain complex with the usual Hochschild differential
\[
b(a_0 \otimes \cdots \otimes a_n) = \sum_{i=0}^{n-1}(-1)^i a_0 \otimes \cdots \otimes a_i a_{i+1} \otimes \cdots \otimes a_n + (-1)^n a_na_0 \otimes a_1 \otimes \cdots \otimes a_{n-1}.
\]
Let $C_\bullet^\lambda (\sA) := C_\bullet(\sA)/\mathrm{im} (1-\lambda)$ be the quotient complex of \emph{cyclic chains}, in which two chains are identified if they are a (signed) cyclic permutation of each other.
Here the cyclic permutation operator $\lambda: C_n(\sA) \to C_n(\sA)$ is given by
\begin{equation*}
	\lambda(a_0 \otimes \cdots \otimes a_n) = (-1)^n a_n \otimes a_0 \otimes \cdots \otimes a_{n-1}.
\end{equation*} 
The usual Hochschild differential of $C_\bullet(\sA)$ descends to $C^\lambda_\bullet(\sA)$, turning it into a chain complex, and its homology is by definition the \emph{cyclic homology} $HC_\bullet(\sA)$ of $\sA$.
For example, writing $[\sA,\sA]$ for the subspace generated by commutators of elements in $\sA$, we have
\begin{equation}\label{ZerothCyclicHomology}
HC_0(\sA)=C^\lambda_0(\sA)/b(C^\lambda_1(\sA))=\sA/[\sA,\sA].
\end{equation}

If $\sJ$ is an ideal in $\sA$, then the \emph{relative cyclic homology} groups $HC_n(\sA, \sJ)$ are defined as the homology of the kernel complex
\[
C^\lambda_\bullet(\sA, \sJ) := \ker\big(C^\lambda_\bullet(\sA) \longrightarrow C^\lambda_\bullet(\sA/\sJ)\big).
\]
From the corresponding short exact sequence of chain complexes, one obtains a boundary map 
\[
\partial : HC_n(\sA/\sJ) \longrightarrow HC_{n-1}(\sA, \sJ)
\]
in cyclic homology.

\begin{remark}
There is an obvious chain map $C^\lambda_\bullet(\sJ) \to C^\lambda_\bullet(\sA, \sJ)$, and one says that $\sJ$ \emph{satisfies excision in cyclic homology} if this map is a quasi-isomorphism. 
By a famous result of Wodzicki, this is the case if and only if $\sJ$ is $H$-unital \cite{wodzicki1989excision}.
The localization ideals $\sB(\cY)$ are likely \emph{not} $H$-unital in the case that the $M$-module is locally infinite-dimensional, so we do \emph{not} expect excision to hold.
\end{remark}

A key feature of the theory is the existence of \emph{periodicity operators}
\begin{equation}
\label{Soperator}
S : HC_{n+2}(\sA) \longrightarrow HC_n(\sA).	
\end{equation}
While these are slightly awkward to describe in terms of the cyclic complex $C^\lambda_\bullet(\sA)$ (see, however \cite[Thm.~2.2.7]{Loday}), we will not need an explicit formula for these.

\paragraph{The coarse character map.}

 The cyclic homology of the localization ideals $\sA=\sB(\cY)$ for big families $\cY$ in a proper metric space $M$ is closely related to coarse homology via the \emph{coarse character map} 
\begin{equation}
\label{CharacterMap}
\chi_* : HC_n(\sB(\cY)) \longrightarrow HX_n(\cY),
\end{equation}
see \cite[\S4.2]{Roe-cohom}.
This map is induced by a chain map on the underlying complexes, which we will describe now.
(In fact, the map described here is the dual of the character map introduced in \cite[\S4.2]{Roe-cohom}.)

The map $\chi$ is defined as follows.
Given elements $T_0, \dots, T_n \in \sB(\cY)$, 
let us consider the multilinear functional $\chi(T_0 \otimes \dots \otimes T_n)$ on the space $C_c(M)$ of compactly supported continuous functions on $M$ defined by
\begin{equation}
\label{DefCharacterMap}
  \chi(T_0\otimes \dots \otimes T_n)(f_0, \dots, f_n) := \Tr(f_0 T_0 \cdots f_n T_n).
\end{equation}
Note that because the $T_i$ are locally trace-class and the $f_i$ are compactly supported, each of the terms $f_i T_i$ is trace-class, and so is their product;
hence the expression is well-defined.
It is shown in \cite[\S4.2]{Roe-cohom} that \eqref{DefCharacterMap} is locally bounded as a linear functional on the subspace of decomposables in $C_c(M^{n+1})$, therefore extends by continuity to a linear functional on $C_c(M^{n+1})$ and hence is given by integration with respect to some locally finite complex-valued Borel measure on $M^{n+1}$ by the Kakutani--Markov--Riesz theorem.
Because each $T_i$ has finite propagation, this measure
has support in some $R$-thickening $\Delta_R$ of the multi-diagonal, hence we obtain a well-defined element $\chi(T_0 \otimes \dots \otimes T_n)$ in $CX_n(M)$.

 Moreover, if each $T_i$ is supported on some member $Y_i$ of $\cY$, the measure $\chi(T_0 \otimes \dots \otimes T_n)$ is supported on $Y_0 \cup \cdots \cup Y_n$.
 This gives an element $\chi(T_0 \otimes \dots \otimes T_n)$ in $CX_n(\cY)$.
 This defines a map 
 \[
 \chi : C_n(\sB(\cY)) \longrightarrow CX_n(\cY)
 \]
 for each big family $\cY$ in $M$.

\begin{lemma}
The map $\chi$ defined above descends to a chain map 
\begin{equation*}
  \chi : C_\bullet^\lambda(\sB(\cY)) \longrightarrow CX^\alpha_\bullet(\cY). 
\end{equation*}
\end{lemma}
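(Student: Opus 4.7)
The plan is to verify two things: that the map $\chi$, after composition with the quotient $CX_\bullet(\cY) \to CX_\bullet^\alpha(\cY)$, annihilates the subspace $\operatorname{im}(1-\lambda)$ and hence descends to the cyclic quotient on the source; and that the resulting map commutes with differentials.

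For the first point, cyclicity of the trace gives
\begin{align*}
\chi(T_n \otimes T_0 \otimes \cdots \otimes T_{n-1})(g_0, \ldots, g_n)
&= \Tr(g_0 T_n g_1 T_0 g_2 T_1 \cdots g_n T_{n-1}) \\
&= \Tr(g_1 T_0 g_2 T_1 \cdots g_n T_{n-1} g_0 T_n) \\
&= \chi(T_0 \otimes \cdots \otimes T_n)(g_1, g_2, \ldots, g_n, g_0),
\end{align*}
so $\chi(T_n \otimes T_0 \otimes \cdots \otimes T_{n-1})$ is identified with the pushforward $\tau_*\chi(T_0 \otimes \cdots \otimes T_n)$ under the cyclic coordinate permutation $\tau \in S_{n+1}$. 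Because $\tau$ has sign $(-1)^n$, in $CX^\alpha_n(\cY)$ this pushforward equals $(-1)^n \chi(T_0 \otimes \cdots \otimes T_n)$. Combined with the factor $(-1)^n$ appearing in the Connes cyclic operator $\lambda$, the two signs cancel, so $\chi \circ \lambda \equiv \chi$ in $CX^\alpha_n(\cY)$, and $\chi$ descends to a map from $C^\lambda_\bullet(\sB(\cY))$.

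For the second point, the face maps $d_i$ of the Hochschild complex correspond under $\chi$ to the projections $\pi_j$ on the coarse side. For $0 \leq i \leq n-1$, substituting the unit function at position $i+1$ of the test tuple $(g_0, \ldots, g_{n-1})$ yields the same trace as the one obtained by multiplying the adjacent factors $T_i, T_{i+1}$, which gives the clean identity $\chi \circ d_i = (\pi_{i+1})_* \circ \chi$ already at the level of $CX_\bullet(\cY)$. For the cyclic face $d_n(T_0 \otimes \cdots \otimes T_n) = T_n T_0 \otimes T_1 \otimes \cdots \otimes T_{n-1}$, another use of cyclicity of the trace shows that
$$\chi(d_n T)(g_0, \ldots, g_{n-1}) = \Tr(g_0 T_n T_0 g_1 T_1 \cdots g_{n-1} T_{n-1})$$
equals $(\pi_0)_*\chi(T)$ evaluated at the cyclically shifted tuple $(g_1, g_2, \ldots, g_{n-1}, g_0)$. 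This shift is a cyclic permutation of $M^n$ with sign $(-1)^{n-1}$, so in $CX^\alpha_{n-1}(\cY)$ we get $\chi(d_n T) \equiv (-1)^{n-1}(\pi_0)_* \chi(T)$. Summing the contributions with the signs prescribed by $b = \sum (-1)^i d_i$ and $\partial = \sum (-1)^j (\pi_j)_*$, all the signs collapse and yield $\chi \circ b = \partial \circ \chi$ in $CX^\alpha_\bullet(\cY)$.

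The main bookkeeping point, and the only genuine subtlety, is the treatment of the cyclic face $d_n$: its correspondence with $(\pi_0)_*$ is \emph{not} literal on $CX_\bullet(\cY)$ but only becomes an equality after passing to the anti-symmetric quotient, precisely because cyclicity of the trace introduces a cyclic permutation of test functions. This is the structural reason why the target complex must be the anti-symmetric one $CX^\alpha_\bullet(\cY)$ rather than $CX_\bullet(\cY)$ itself, and dually why the source must be the cyclic quotient $C^\lambda_\bullet(\sB(\cY))$. Once both quotients are in place, no further complications appear and the chain-map property follows by direct computation.
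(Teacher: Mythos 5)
Your argument is essentially identical to the paper's: both use cyclicity of the trace once to kill $\operatorname{im}(1-\lambda)$ and once more to handle the cyclic face $d_n$, with the remaining faces matching the boundary projections literally, and both correctly flag that these identifications only hold after passing to the anti-symmetric quotient $CX^\alpha_\bullet(\cY)$. One small sign slip at the end: carrying out the summation you describe gives $\chi \circ b = -\partial \circ \chi$ (as the paper also records), not $\chi \circ b = \partial \circ \chi$; this is harmless since a degree-dependent sign change turns it into a strict chain map and the induced map on homology is unaffected.
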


Viewing the complex of anti-symmetric chains as a subcomplex of all chains using the anti-symmetrization map \eqref{AntiSymmetrization}, this yields the desired character map \eqref{CharacterMap} after passing to homology. 

\begin{proof}
We first show that $\chi$ indeed descends to the quotient $C^\lambda_n(\sB(\cY))$. 
For any decomposable cochain $f_0\otimes\dots\otimes f_n\in C_c(M^{n+1})$ which is coarse on $\cY$, we use cyclicity of the trace to deduce that
\begin{align*}
\chi\big(\lambda(T_0 \otimes \cdots \otimes T_n)\big)(f_0, \dots, f_n)
&= (-1)^n \cdot \chi(T_n \otimes T_0 \otimes \cdots \otimes T_{n-1})(f_0, \dots, f_n)
\\
&= (-1)^n \cdot \Tr(f_0 T_n f_1 T_0 f_2 T_1 \cdots f_n T_{n-1})
\\
&= (-1)^n \cdot \Tr(f_1 T_0 f_2 T_1 \cdots f_n T_{n-1}f_0 T_n )
\\
&= (-1)^n \cdot \chi(T_0 \otimes \cdots \otimes T_n)(f_1, \ldots, f_n, f_0).
\end{align*}
This shows that $\chi(\lambda(T_0 \otimes \cdots \otimes T_n))$ has the same integration pairing with anti-symmetric coarse cochains on $\cY$ as $\chi(T_0 \otimes \cdots \otimes T_n)$ does. 
Hence both measures agree in the quotient $CX^\alpha_n(\cY)$.

To see that $\chi$ is a chain map, we calculate
\begin{align*}
\chi\big(b(T_0 \otimes \cdots \otimes T_n)\big)&(f_0, \dots, f_{n-1}) 
\\
&= \sum_{i=0}^{n-1} (-1)^i \chi\big(T_0 \otimes \cdots \otimes T_{i}T_{i+1} \otimes \cdots \otimes T_n\big)(f_0, \dots, f_{n-1})	
\\
&\qquad + (-1)^n \chi\big(T_n T_0 \otimes \cdots  \otimes T_{n-1}\big)(f_0, \dots, f_{n-1})
\\
&= \sum_{i=0}^{n-1} (-1)^i \chi\big(T_0 \otimes \cdots  \otimes T_n\big)(f_0, \dots, \underbrace{\mathbf{1}}_{i+1}, \dots f_{n-1})	
\\
&\qquad + (-1)^n \chi\big(T_n \otimes T_0 \otimes \cdots  \otimes T_{n-1}\big)(f_0, \mathbf{1}, f_1, \dots, f_{n-1})
\\
&= - \chi(T_0 \otimes \cdots \otimes T_n) \big(\delta(f_0 \otimes \dots \otimes f_{n-1})\big)
\\
&\qquad+ \chi(T_0 \otimes \cdots \otimes T_n)(\mathbf{1}, f_0, \dots, f_{n-1})\\
&\qquad + (-1)^n \chi\big(T_0 \otimes \cdots  \otimes T_{n}\big)(\mathbf{1}, f_1, \dots, f_{n-1},f_0).
\end{align*}
Thus, we have
\[
\chi\big(b(T_0 \otimes \cdots \otimes T_n)\big)(f_0\wedge \dots \wedge f_{n-1}) = - \chi(T_0 \otimes \cdots \otimes T_n)\big(\delta(f_0\wedge \dots \wedge f_{n-1})\big),
\]
and it follows that $\chi(b(T_0 \otimes \cdots \otimes T_n))$ has  the same pairing with anti-symmetric cochains as $-\partial \chi(T_0 \otimes \cdots \otimes T_n)$, in other words, they agree in the quotient $CX^\alpha_{n-1}(\cY)$.
\end{proof}

If $\cX$ and $\cY$ are two big families in $M$, we also obtain a map
\begin{align*}
\chi : C^\lambda_\bullet\left(\frac{\sB(\cX)}{\sB(\cX \Cap \cY)}\right) &\longrightarrow CX_\bullet^\alpha(\cX, \cX \Cap \cY) = \frac{CX_\bullet^\alpha(\cX)}{CX_\bullet^\alpha(\cX \Cap \cY)},
\end{align*}
whose well-definedness follows from the fact that the measure $\chi(T_0 \otimes \cdots \otimes T_n)$ is supported on $\cX \Cap \cY$ as soon as \emph{one} of the factors $T_i$ is supported on $\cX \Cap \cY$.
This implies that $\chi:C^\lambda_\bullet(\sB(\cX)) \to CX^\alpha_\bullet(\cX)$ restricts to a map
\[
\chi : C^\lambda_\bullet\big(\sB(\cX), \sB(\cX \Cap \cY)\big) \longrightarrow CX^\alpha_\bullet(\cX \Cap \cY)
\]
 between the kernel complexes.

\begin{lemma}
\label{CharacterBoundary}
Let $\cX$ and $\cY$ be two big families in $M$. 
Then for each $n \in \Z$, the following diagram commutes.
\begin{equation*}
\begin{tikzcd}
\displaystyle HC_n\left(\frac{\sB(\cX)}{\sB(\cX \Cap \cY)}\right)
\ar[r, "\chi_*"]
\ar[d, "\partial"']
&
HX_n(\cX, \cX \Cap \cY)
\ar[d, "\partial"']
\\
HC_{n-1}\big(\sB(\cX), \sB(\cX \Cap \cY)\big)
\ar[r, "\chi_*"]
&
HX_{n-1}(\cX \Cap \cY).
\end{tikzcd}
\end{equation*}
\end{lemma}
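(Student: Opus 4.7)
The plan is to promote the character map $\chi$ to a morphism between two short exact sequences of chain complexes, and then read off the claimed commutativity from the naturality of the connecting morphism in homology.

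First I would write down the two short exact sequences whose connecting morphisms define the vertical maps of the diagram. On the cyclic side, the relative cyclic complex is by definition the kernel of the quotient map, yielding
\[
0 \to C^\lambda_\bullet\bigl(\sB(\cX), \sB(\cX \Cap \cY)\bigr) \to C^\lambda_\bullet(\sB(\cX)) \to C^\lambda_\bullet\!\left(\tfrac{\sB(\cX)}{\sB(\cX \Cap \cY)}\right) \to 0.
\]
On the coarse side, the relative chain complex $CX^\alpha_\bullet(\cX, \cX \Cap \cY)$ is \emph{defined} as the quotient $CX^\alpha_\bullet(\cX)/CX^\alpha_\bullet(\cX \Cap \cY)$, giving
\[
0 \to CX^\alpha_\bullet(\cX \Cap \cY) \to CX^\alpha_\bullet(\cX) \to CX^\alpha_\bullet(\cX, \cX \Cap \cY) \to 0.
\]

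Next, I would combine the three versions of the character map constructed in the excerpt into a morphism of these short exact sequences. The middle column is the chain map $\chi : C^\lambda_\bullet(\sB(\cX)) \to CX^\alpha_\bullet(\cX)$ built earlier. The left column uses the observation stated just before the lemma: the measure $\chi(T_0 \otimes \cdots \otimes T_n)$ lies in $CX^\alpha_n(\cX \Cap \cY)$ as soon as one factor $T_i$ is supported on $\cX \Cap \cY$. Since the relative complex $C^\lambda_n(\sB(\cX), \sB(\cX \Cap \cY))$ is, as the kernel of a tensor-product quotient map, spanned by simple tensors with at least one factor in the ideal, the middle $\chi$ indeed restricts to the required chain map. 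The right column is then the map induced on the quotients, and this is precisely the character map on $C^\lambda_\bullet(\sB(\cX)/\sB(\cX \Cap \cY))$ described in the paragraph preceding the lemma. Commutativity of the resulting two chain-level squares is tautological from the way the maps are assembled. Passing to homology and invoking the standard naturality of the connecting morphism then delivers the commutative square of the lemma.

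The main obstacle is verifying that the right column genuinely descends to the quotient, i.e., is independent of chosen lifts. Given two lifts $T_0 \otimes \cdots \otimes T_n$ and $T_0' \otimes \cdots \otimes T_n'$ of the same class in $C^\lambda_n(\sB(\cX)/\sB(\cX \Cap \cY))$, the telescoping identity
\[
T_0 \otimes \cdots \otimes T_n - T_0' \otimes \cdots \otimes T_n' = \sum_{i=0}^n T_0' \otimes \cdots \otimes T_{i-1}' \otimes (T_i - T_i') \otimes T_{i+1} \otimes \cdots \otimes T_n
\]
writes the difference as a sum of tensors each carrying a factor $T_i - T_i'$ in $\sB(\cX \Cap \cY)$; the key observation then forces every summand's $\chi$-image into $CX^\alpha_n(\cX \Cap \cY)$, so the difference vanishes modulo that subcomplex. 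Well-definedness at the level of the cyclic (rather than bar) complex is immediate since $\chi$ already descends there. Once this well-definedness is in hand, the remainder of the proof is pure diagram chase.
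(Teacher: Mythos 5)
Your proposal is correct and follows essentially the same route as the paper: exhibit the character maps as a morphism between the two relevant short exact sequences of chain complexes, then invoke naturality of the connecting homomorphism. The paper states this even more tersely, while you fill in the two supporting details (the restriction of $\chi$ to the relative cyclic subcomplex and the well-definedness on the quotient), both of which the paper had already addressed in the paragraph preceding the lemma.
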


\begin{proof}
We have the  morphism 
\[
\begin{tikzcd}[column sep=0.8cm]
0 \ar[r]
&
C^\lambda_\bullet\big(\sB(\cX), \sB(\cX \Cap \cY)\big)
\ar[r]
\ar[d, "\chi_*"']
& 
C^\lambda_\bullet(\sB(\cX))
\ar[r]
\ar[d, "\chi_*"']
&
\displaystyle C^\lambda_\bullet\left(\frac{\sB(\cX)}{\sB(\cX \Cap \cY)}\right)
\ar[d, "\chi_*"']
\ar[r]
&
0
\\
0 \ar[r]
&
CX_\bullet^\alpha(\cX \Cap \cY)
\ar[r]
&
CX_\bullet^\alpha(\cX)
\ar[r]
&
\displaystyle \frac{CX_\bullet^\alpha(\cX)}{CX_\bullet^\alpha(\cX \Cap \cY)}
\ar[r]
&
0
\end{tikzcd}
\]
of short exact sequences of chain complexes. 
Taking homology, this induces the claimed commutative diagram involving the induced boundary maps.
\end{proof}

It will be convenient below that we may trivially extend the coarse character map to the unitization $\sB(\cY)^+$ by setting
\[
\widetilde{\chi}\big((T_0 + \lambda_0) \otimes  \dots \otimes ( T_n + \lambda_n)\big) := \chi(T_0 \otimes \cdots \otimes T_n).
\]
It is a standard fact that this again yields a chain map and hence a coarse character map
\[
\widetilde{\chi}_* : HC_n(\sB(\cY)^+) \longrightarrow HX_n(\cY).
\]

\subsection{$K$-theory and the Mayer-Vietoris sequence}

Throughout this section, let $\sA$ be a (not necessarily unital) algebra over $\C$.

\paragraph{Algebraic $K$-theory.}

Recall that for each $n \in \Z$, there is an abelian group $K_n^{\mathrm{alg}}(\sA)$, called the $n$-th \emph{algebraic $K$-theory} group of $\sA$.
For $n\in\{0, 1\}$, these groups have the following explicit descriptions:
\begin{enumerate}
\item[(0)]
For unital $\sA$, the algebraic $K$-theory group $K_0^{\mathrm{alg}}(\sA)$ can be described as the Grothendieck group of the set of equivalence classes of idempotents in matrix algebras over $\sA$, where two idempotents are identified if they are conjugate.
For non-unital algebras, one defines $K_0^{\mathrm{alg}}(\sA) := \ker(K_0^{\mathrm{alg}}(\sA^+) \to K_0^{\mathrm{alg}}(\C))$, where $\sA^+$ is the unitization of $\sA$.
\item[(1)]
For unital $\sA$, one defines $K_1^{\mathrm{alg}}(\sA)$ as the abelianization of the group $\GL_\infty(\sA)$ consisting of invertible infinite matrices with $\sA$-valued entries, which differ from the identity matrix only in finitely many places. 
For non-unital $\sA$ one defines $K_1^{\mathrm{alg}}(\sA)$ as in the $n=0$ case as a subgroup of $K_1^{\mathrm{alg}}(\sA^+)$. 
We will use that any element in $K_1^{\mathrm{alg}}(\sA)$ can be represented by an invertible element $U \in M_k(\sA^+)$ such that $U-1 \in M_k(\sA)$.
We denote the groups of such elements by $\GL_k(\sA)^+$, and set $\GL_k(\sA)^+ := \GL_k(\sA)$ if $\sA$ is already unital.
\end{enumerate}

We will not need explicit descriptions of the algebraic $K$-groups in higher or lower degrees, but will only use the formal properties of the algebraic $K$-functor.
Most importantly, whenever $\sJ \subseteq \sA$ is an ideal, there are \emph{relative} $K$-theory groups $K_n^{\mathrm{alg}}(\sA, \sJ)$, $n \in \Z$, which fit into a long exact sequence
\[
\cdots
\longrightarrow
K_n^{\mathrm{alg}}(\sA, \sJ) 
\longrightarrow
K_n^{\mathrm{alg}}(\sA)
\longrightarrow
K_n^{\mathrm{alg}}(\sA/\sJ)
\longrightarrow
K_{n-1}^{\mathrm{alg}}(\sA, \sJ)
\longrightarrow
\cdots
\]
One has natural maps $K_n^{\mathrm{alg}}(\sJ) \to K_n^{\mathrm{alg}}(\sA, \sJ)$ and $\sJ$ is said to \emph{satisfy excision in algebraic $K$-theory} if this map is an isomorphism for every algebra $\sA$ containing $\sJ$ as an ideal.

\paragraph{Topological $K$-theory.}

If $\sA$ is a Banach algebra, one may also define its \emph{topological $K$-theory groups} $K_n^{\mathrm{top}}(\sA)$, $n \in \Z$.
One then has functorial comparison maps
\begin{equation}
\label{ComparisonKalgKtop}
K_n^{\mathrm{alg}}(\sA) \longrightarrow K_n^{\mathrm{top}}(\sA)
\end{equation}
from algebraic to topological $K$-theory.
This comparison map is always an isomorphism in degree zero.
The comparison map is an isomorphism in \emph{all} degrees if $\sA$ is, for example, a stable $C^*$-algebra by the resolution of Karoubi's conjecture due to Suslin--Wodzicki \cite[\S10]{SuslinWodzicki1992} (see also \cite{CortinasThom}), but in general (for example in the case of $\sA = \C$), it is far from being an isomorphism.
In topological $K$-theory, there is unconditional excision for \emph{closed} ideals $\sJ \subseteq \sA$, meaning that there is an associated long exact sequence 
\[
\cdots 
\longrightarrow
K_n^{\mathrm{top}}(\sJ) 
\longrightarrow
K_n^{\mathrm{top}}(\sA)
\longrightarrow
K_n^{\mathrm{top}}(\sA/\sJ)
\longrightarrow
K_{n-1}^{\mathrm{top}}(\sJ)
\longrightarrow
\cdots
\]
to every ideal inclusion, which is compatible with the comparison maps \eqref{ComparisonKalgKtop}.
%
%
In contrast to algebraic $K$-theory, the groups in degrees other than $n \in \{0, 1\}$ may be related to the low degree ones using the \emph{Bott periodicity isomorphism} 
\[
\beta: K_n^{\mathrm{top}}(\sA) \stackrel{\cong}{\longrightarrow} K_{n+2}^{\mathrm{top}}(\sA)
\]
making the complex topological $K$-groups 2-periodic, which is natural in $\sA$ and commutes with the boundary maps of the long exact sequences in topological $K$-theory.

\paragraph{Homotopy $K$-theory.}

The algebras $\sB(\cY)$ for big families $\cY$ in a metric space $M$ likely do \emph{not} satisfy excision in algebraic $K$-theory, and they are also not Banach algebras so that the topological $K$-theory functor cannot be applied.
To tackle these issues, we will use a closely related theory, namely \emph{Weibel's homotopy $K$-theory}, which has better formal properties than algebraic $K$-theory \cite{Weibel}.
It associates abelian groups $KH_n(\sA)$, $n \in \Z$, to arbitrary algebras $\sA$ and is designed to be \emph{homotopy invariant}, in the sense that the evaluation maps $\sA[x] \to \sA$ provide isomorphisms $KH_n(\sA[x]) \cong KH_n(\sA)$.
Homotopy $K$-theory comes with functorial comparison maps
\begin{equation}
\label{ComparisonMapKalgKH}
K_n^{\mathrm{alg}}(\sA) \longrightarrow KH_n(\sA)
\end{equation}
and---in contrast to algebraic $K$-theory---satisfies unconditional excision in the sense that we have long exact sequences
\[
\cdots \longrightarrow
KH_n(\sJ) 
\longrightarrow
KH_n(\sA)
\longrightarrow
KH_n(\sA/\sJ)
\longrightarrow
KH_{n-1}(\sJ)
\longrightarrow
\cdots
\]
for \emph{any} ideal $\sJ\subseteq \sA$ \cite[Thm.~2.1]{Weibel}.
Beware, however, that the comparison map \eqref{ComparisonMapKalgKH} is generally \emph{not} an isomorphism, even in degree zero.
What makes $KH$-theory useful for our purposes, however, is its interplay with topological $K$-theory.
Namely, for a Banach algebra $\sA$, the comparison map \eqref{ComparisonKalgKtop} factors naturally through $KH$,
\begin{equation}
\label{FactorizationComparisonMap}
K_n^\mathrm{alg}(\sA) \longrightarrow KH_n(\sA) \longrightarrow K_n^{\mathrm{top}}(\sA)
\end{equation}
where the first map is \eqref{ComparisonMapKalgKH} and the composition is \eqref{ComparisonKalgKtop}.
These comparison maps are compatible with the boundary maps for the corresponding long exact sequences.

\paragraph{The Mayer-Vietoris sequence in $KH$.}

Let $M$ be a proper metric space and let $\cH$ be an $M$-module to form the localization ideals for  big families in $M$.
Let $\cX, \cY$ be two big families in $M$.
Then there is a long exact Mayer-Vietoris sequence in homotopy $K$-theory, 
\[
\hspace{-0.1cm}
\begin{tikzcd}[column sep=0.3cm]       
\cdots \arrow{r}  
&[0.3cm]
KH_n(\sB(\cX\Cap \cY)) \ar[r] 
\arrow[phantom, ""{coordinate, name=Z}]{d}
&
KH_n(\sB(\cX)) \oplus KH_n(\sB(\cY)) \ar[r]  &
KH_n(\sB(\cX \Cup \cY)) 
  \arrow[
    rounded corners,
    to path={
      -- ([xshift=2ex]\tikztostart.east)
      |- (Z) [near end]\tikztonodes
      -| ([xshift=-2ex]\tikztotarget.west)
      -- (\tikztotarget)
    }
  ]{dll} 
  \\
 &
KH_{n-1}(\sB(\cX \Cap \cY)) \arrow{r} & KH_{n-1}(\sB(\cX)) \oplus KH_{n-1}(\sB(\cY)) \ar[r] &
\cdots \hspace{2cm}&
\end{tikzcd}
\]
constructed as follows.
Consider the short exact sequence
\begin{equation}\label{eqn:KHSES}
0 
\longrightarrow
\sB(\cX\Cap \cY) 
\longrightarrow
 \sB(\cX)
\longrightarrow
\displaystyle\frac{\sB(\cX)}{\sB(\cX \Cap \cY)}
\longrightarrow
 0
\end{equation}
of algebras, which yields a long exact sequence in homotopy $K$-theory.
The Mayer-Vietoris boundary map is then the boundary map of this long exact sequence, precomposed with the map induced in $KH$ by the algebra homomorphism
\[
\sB(\cX \Cup \cY) \longrightarrow \frac{\sB(\cX \Cup \cY)}{\sB(\cY)} \cong \frac{\sB(\cX)}{\sB(\cX \Cap \cY)}.
\]

\subsection{Chern characters and Mayer-Vietoris maps}

Quite generally, Chern characters are natural transformations comparing some flavor of $K$-theory with some flavor of cyclic homology.
In this section, we review the general theory needed for the formulation of the Kubo and Kitaev pairings in these terms in Section \ref{SectionIntegralityKubo}. 

\paragraph{Abstract properties.}

For any $n \in \Z$ and any $m \geq 0$, there is a \emph{Chern character} homomorphism
\[
\ch_n^{(m)} : K_n^{\mathrm{alg}}(\sA) \longrightarrow HC_{n+2m}(\sA),
\]
where we sometimes leave out the indices $n, m$, depending on the context.
For fixed $n$, the Chern characters have the property that the different Chern characters are related by the $S$-operator \eqref{Soperator} through the formula 
\begin{equation}
\label{CompatibilityS}
S \ch_n^{(m)} = \ch_n^{(m-1)}.
\end{equation}
This relation is a shadow of the fact that the Chern character actually takes values in \emph{periodic} cyclic homology $HP_n(\sA)$, see \cite[11.4.3]{Loday}. 
Because the latter theory is homotopy invariant (in the sense that the evaluation maps $\sA[x] \to \sA$ induce isomorphisms in $HP$, see \cite{Goodwillie}), the Chern character factorizes naturally through homotopy $K$-theory,
\begin{equation*}
\ch_n^{(m)} : K^{\mathrm{alg}}_n(\sA)
\longrightarrow
KH_n(\sA)
\xrightarrow{~\ch_n~}
HP_n(\sA) 
\longrightarrow
HC_{n+2m}(\sA).
\end{equation*}
The Chern character intertwines the boundary maps corresponding to ideal inclusions $\sJ \subseteq \sA$, leading to commutative diagrams \cite[11.4.8]{Loday}
\begin{equation}
\label{ChernVsBoundary}
\begin{tikzcd}
K_n^{\mathrm{alg}}(\sA/\sJ)
\ar[r] 
\ar[rr, bend left=20, "\ch_n^{(m)}"]
\ar[d, "\partial"']
&
KH_n(\sA/\sJ)
\ar[r, "\ch_n^{(m)}"]
\ar[dd, "\partial"']
&[0.8cm]
HC_{n+2m}(\sA/\sJ)
\ar[d, "\partial"']
\\
K_{n-1}^{\mathrm{alg}}(\sA, \sJ)
&
&
HC_{n-1+2m}(\sA, \sJ)
\\
K_{n-1}^{\mathrm{alg}}(\sJ)
\ar[u]
\ar[r]
\ar[rr, "\ch_{n-1}^{(m)}"', bend right=20]
&
KH_{n-1}(\sJ)
\ar[r, "\ch_{n-1}^{(m)}"]
&
HC_{n-1+2m}(\sJ).
\ar[u]
\end{tikzcd}	
\end{equation}
If $\sA$ is a Banach algebra, one may define the \emph{topological cyclic homology} groups $HC^{\mathrm{top}}_n(\sA)$, $n \in \Z$ of $\sA$. 
These may be taken to be the homology groups of the cyclic complex $C^{\mathrm{top},\lambda}_\bullet(\sA)$, defined just as $C^\lambda_\bullet(\sA)$ but replacing the algebraic tensor product by the completed projective tensor product. 
The inclusion from the algebraic to the completed tensor product induces comparison maps $HC_n(\sA) \to HC_n^{\mathrm{top}}(\sA)$.
One also has a topological version of the Chern characters defined on topological $K$-theory, which has the property that the squares
\begin{equation*}
\begin{tikzcd}
K_n^{\mathrm{alg}}(\sA) \ar[r, "\ch_n^{(m)}"]
\ar[d]
&
HC_{n+2m}(\sA)
\ar[d]
\\
K_n^{\mathrm{top}}(\sA) \ar[r, "\ch_n^{(m)}"]
&
HC_{n+2m}^{\mathrm{top}}(\sA)
\end{tikzcd}
\end{equation*}
commute for any Banach algebra $\sA$ \cite[\S4.2]{KaroubiHomologieCyclique}.
It will be important that under the topological Chern character, Bott periodicity is mapped to the $S$-operator \eqref{Soperator} in the sense that we have the commutative diagram
\begin{equation}
\label{CompatibilityBottS}
\begin{tikzcd}[column sep=1.5cm]
K_n^{\mathrm{top}}(\sA) \ar[r, "\ch_n^{(m)}"]
\ar[d, "\beta"']
&
HC_{n+2m}^{\mathrm{top}}(\sA)
\\
K_{n+2}^{\mathrm{top}}(\sA) \ar[r, "\ch_{n+2}^{(m)}"]
&
HC_{n+2m+2}^{\mathrm{top}}(\sA)
\ar[u, "S/2\pi i"']
\end{tikzcd}
\end{equation}
see \cite[Corollaire 4.17]{KaroubiHomologieCyclique}.

\paragraph{Explicit formulas.}
We will use the following explicit cocycle representative for the Chern character in the degrees $n \in \{0, 1\}$, for unital $\sA$.
For any $n=2m$ even, then the Chern character $K_0^{\mathrm{alg}}(\sA) \to HC_{2m}(\sA)$ is represented by the explicit cocycle
\begin{equation}
\label{Ch0Formula}
  \ch_0^{(m)}(P) := (-1)^m \frac{(2m)!}{m!} \cdot  [\tr(\underbrace{P \otimes \cdots \otimes P}_{2m+1})].
\end{equation}
for idempotents $P = (P_a^b) \in M_k(\sA)$, where
\[
\tr(P \otimes \cdots \otimes P) = \sum_{a_0, \dots, a_n = 1}^k P_{a_0}^{a_1} \otimes P_{a_1}^{a_2} \otimes  \cdots \otimes P_{a_{n-1}}^{a_n} \otimes P_{a_n}^{a_0}.
\]
One may check that choosing a similar idempotent $P' = UPU^{-1}$ results in a cohomologous cocycle, hence $\ch_0^{(m)}$ is well-defined on $K_0^{\mathrm{alg}}(\sA)$.
%
%
The compatibility of these cocycles with the $S$-operators \eqref{CompatibilityS} may be explicitly checked on cocycle level using the description of the $S$-operator from \cite[Thm.~2.2.7]{Loday}.

For any $n = 2m+1$ odd, the Chern character $K_1^{\mathrm{alg}}(\sA) \to HC_n(\sA)$ of an invertible element $U \in \GL_k(\sA)^+$ has the explicit cocycle representation

\begin{equation}
\label{Ch1Formula}
\ch_1^{(m)}(U) = m! \cdot \big[\tr\big(\underbrace{(U - 1) \otimes (U^{-1} - 1) \otimes \cdots \otimes (U - 1) \otimes (U^{-1} - 1)}_{m+1 ~\text{pairs}}\big)\big].
\end{equation}
The compatibility with the $S$-operator may again be explicitly checked on cocycle level.
Also the commutativity of the diagrams \eqref{ChernVsBoundary} may be checked explicitly in the $n=1$ case using the explicit description of the boundary map as in \cite[\S4]{Milnor}.

\paragraph{Compatibility with the Mayer-Vietoris map.}

We will crucially use the following compatibility result between Mayer-Vietoris maps in $KH$ and in coarse homology.
\begin{lemma}
\label{LemmaChernMVboundary}
Let $\cX$, $\cY$ be big families in $M$.
Then for any $m, n \in \N_0$, $n \geq 1$, the following diagram commutes:
\[
\begin{tikzcd}
KH_{n-2m}(\sB(\cX \Cup \cY))
\ar[d, "\partial_{\mathrm{MV}}"']
\ar[r, "\ch^{(m)}_{n-2m}"] 
&[1cm]
HC_{n}(\sB(\cX \Cup \cY)) 
\ar[r, "\chi_*"]
&
HX_{n}(\cX \Cup \cY)
\ar[d, "\partial_{\mathrm{MV}}"]
\\
KH_{n-2m-1}(\sB(\cX \Cap \cY))
\ar[r, "\ch^{(m)}_{n-2m-1}"] 
&
HC_{n-1}(\sB(\cX \Cap \cY)) 
\ar[r, "\chi_*"]
&
HX_{n-1}(\cX \Cap \cY)
\end{tikzcd}
\]
\end{lemma}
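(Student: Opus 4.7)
The strategy is to factor each Mayer-Vietoris boundary map into three pieces---a quotient map, an excision isomorphism, and an ideal-inclusion boundary---and then verify compatibility of $\chi_* \circ \ch^{(m)}$ with each piece separately.

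First I would record the parallel factorizations. By Remark~\ref{RemarkAlternativeMVboundary}, the coarse-homological boundary factors as
\[
HX_{n}(\cX \Cup \cY) \xrightarrow{\pi_*} HX_{n}(\cX \Cup \cY, \cY) \xleftarrow[\cong]{\mathrm{exc}} HX_{n}(\cX, \cX \Cap \cY) \xrightarrow{\partial} HX_{n-1}(\cX \Cap \cY),
\]
where the middle arrow is the excision isomorphism of Proposition~\ref{PropExcision}. In $KH$-theory, the MV boundary factors analogously as
\[
KH_{n-2m}(\sB(\cX \Cup \cY)) \to KH_{n-2m}\!\left(\tfrac{\sB(\cX \Cup \cY)}{\sB(\cY)}\right) \xleftarrow[\cong]{} KH_{n-2m}\!\left(\tfrac{\sB(\cX)}{\sB(\cX \Cap \cY)}\right) \xrightarrow{\partial} KH_{n-2m-1}(\sB(\cX \Cap \cY)),
\]
the algebraic isomorphism in the middle being induced by the inclusion $\sB(\cX) \hookrightarrow \sB(\cX \Cup \cY)$.

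Next I would build, in parallel, the corresponding three-stage factorization for $HC_{\bullet}$ using the quotient algebra, the natural isomorphism $HC_n(\sB(\cX \Cup \cY)/\sB(\cY)) \cong HC_n(\sB(\cX)/\sB(\cX \Cap \cY))$ induced by the same algebra map, and the cyclic boundary for the ideal inclusion. This yields three stacked squares to check.
\begin{enumerate}
\item \emph{Quotient square.} Commutes by naturality of $\ch^{(m)}$ and of $\chi_*$ under the algebra homomorphism $\sB(\cX \Cup \cY) \to \sB(\cX \Cup \cY)/\sB(\cY)$, combined with the parallel chain-level quotient map $CX^\alpha_\bullet(\cX \Cup \cY) \to CX^\alpha_\bullet(\cX \Cup \cY)/CX^\alpha_\bullet(\cY)$.
\item \emph{Excision square.} Both excision isomorphisms come from applying the functors in question to the inclusion-induced map of chain complexes
\[
\frac{C_\bullet^\lambda(\sB(\cX))}{C_\bullet^\lambda(\sB(\cX \Cap \cY))} \longrightarrow \frac{C_\bullet^\lambda(\sB(\cX \Cup \cY))}{C_\bullet^\lambda(\sB(\cY))} \qquad\text{and}\qquad \frac{CX^\alpha_\bullet(\cX)}{CX^\alpha_\bullet(\cX \Cap \cY)} \longrightarrow \frac{CX^\alpha_\bullet(\cX \Cup \cY)}{CX^\alpha_\bullet(\cY)}.
\]
The coarse character map $\chi$ commutes on the nose with these inclusions (it is defined factor-wise from the operators $T_i$ and preserves the support properties), so taking homology yields the claimed compatibility.
\item \emph{Ideal-boundary square.} Commutativity here is precisely the content of diagram~\eqref{ChernVsBoundary} (for the $\ch^{(m)}$ column, applied to the ideal $\sB(\cX \Cap \cY) \subseteq \sB(\cX)$) pasted on top of Lemma~\ref{CharacterBoundary} (for the $\chi_*$ column).
\end{enumerate}

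Stacking the three commutative squares yields the asserted diagram. The main obstacle is the \emph{excision square}: one must be careful that the two notions of excision (algebraic quotient isomorphism on the $KH$/$HC$ side; the excision isomorphism of Prop.~\ref{PropExcision} on the $HX$ side) are intertwined by $\chi_*$. This reduces to verifying that $\chi$ identifies the two chain-level quotient maps displayed above, which follows from the fact that an operator supported on $Y$ (any member of $\cY$) is sent by $\chi$ to a measure supported on $Y^{n+1}$, so the localization ideal $\sB(\cY)$ maps under $\chi$ precisely to the subcomplex $CX^\alpha_\bullet(\cY)$ being quotiented out.
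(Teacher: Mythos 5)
Your proposal is correct and follows essentially the same route as the paper: both factor the Mayer--Vietoris boundary into the quotient map, the algebra/chain-level isomorphism, and the ideal-inclusion boundary, then verify each resulting square via naturality, diagram~\eqref{ChernVsBoundary}, and Lemma~\ref{CharacterBoundary}. The only cosmetic difference is that you organize the verification as three stacked rows (for the composite $\chi_*\circ\ch$), whereas the paper pastes two column-diagrams (first $KH$--$HC$, then $HC$--$HX$); the underlying subsquare checks coincide.
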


\begin{proof}
Consider the diagram
\[
\begin{tikzcd}[column sep=2cm]
KH_{n-2m}(\sB(\cX \Cup \cY))
\ar[dddd, "\partial_{\mathrm{MV}}"', bend right=70]
\ar[d]
\ar[r, "\ch^{(m)}_{n-2m}"]
&
HC_n(\sB(\cX \Cup \cY))
\ar[d]
\\
\displaystyle KH_{n-2m}\left(\frac{\sB(\cX \Cup \cY)}{\sB(\cY)}\right)
\ar[r, "\ch^{(m)}_{n-2m}"]
&
\displaystyle HC_n\left(\frac{\sB(\cX \Cup \cY)}{\sB(\cY)}\right)
\\
\displaystyle KH_{n-2m}\left(\frac{\sB(\cX)}{\sB(\cX \Cap \cY)}\right)
\ar[u, "\cong"]
\ar[dd, "\partial"]
\ar[r, "\ch^{(m)}_{n-2m}"]
&
\displaystyle HC_n\left(\frac{\sB(\cX)}{\sB(\cX \Cap \cY)}\right)
\ar[u, "\cong"]
\ar[d, "\partial"]
\\
& HC_{n-1}\big(\sB(\cX), \sB(\cX \Cap \cY)\big)
\\
KH_{n-2m-1}(\sB(\cX \Cap \cY))
\ar[r, "\ch^{(m)}_{n-2m-1}"]
&
HC_{n-1}(\sB(\cX\Cap \cY)),
\ar[u]
\end{tikzcd}
\]
which commutes by the naturality of the Chern character and its compatibility \eqref{ChernVsBoundary} with boundary maps.
The left vertical composition is by definition the Mayer-Vietoris boundary.
We now paste this diagram to the following diagram,
\[
\begin{tikzcd}
HC_n(\sB(\cX \Cup \cY))
\ar[d]
\ar[r, "\chi_*"]
&
HX_n(\cX \Cup \cY)
\ar[d]
\ar[dddd, "\partial_{\mathrm{MV}}", bend left=70]
\\
\displaystyle HC_n\left(\frac{\sB(\cX \Cup \cY)}{\sB(\cY)}\right)
\ar[r, "\chi_*"]
&
HX_n(\cX \Cup \cY, \cY)
\\
\displaystyle HC_n\left(\frac{\sB(\cX)}{\sB(\cX \Cap \cY)}\right)
\ar[u, "\cong"]
\ar[d, "\partial"]
\ar[r, "\chi_*"]
&
HX_n(\cX, \cX \Cap \cY)
\ar[u, "\cong"]
\ar[dd, "\partial"]
\\
HC_{n-1}\big(\sB(\cX), \sB(\cX \Cap \cY)\big)
\ar[dr, "\chi_*", bend left=10]
\\
HC_{n-1}(\sB(\cX \Cap \cY)),
\ar[u]
\ar[r, "\chi_*"]
&
HX_{n-1}(\cX \Cap \cY),
\end{tikzcd}
\]
which commutes by Lemma~\ref{CharacterBoundary}.
The right vertical map is the Mayer-Vietoris boundary in coarse homology, see Remark~\ref{RemarkAlternativeMVboundary}.
This finishes the proof.
\end{proof}

\section{Proof of the quantization result}

\label{ProofQuantization}

In this section, we give a proof of the quantization result Thm.~\ref{MainQuantizationThm}.
We start by explaining how coarsely transverse collections of half-spaces and partitions give rise to coarse cohomology classes, to connect them to the Chern character and the coarse character map.
We then give a proof for the Kubo pairing via dimensional reduction and then reduce the Kitaev pairing to the Kubo case.

\subsection{Coarse cohomology classes from half-spaces}
\label{SectionHalfSpaceClasses}

Let $X_1, \dots, X_n$ be a coarsely transverse collection of half-spaces in a proper metric space $M$. 
For $k=0,\ldots,n-1$, we define the big families
\begin{equation}
\label{DefinitionXk}
\cX_k := \partial X_{k+1} \Cap \cdots \Cap \partial X_n,	
\end{equation}
so that $\cX_0 = \partial X_1 \Cap \cdots \Cap \partial X_n$   is the big family of bounded subsets (because the collection is coarsely transverse). 
By convention, we also set $\cX_n := \{M\}$.

\begin{lemma}\label{LemShortenedSwitchCochain}
For each $0 \leq k \leq n$, we have
\[
\mathbf{1} \wedge X_1 \wedge \cdots \wedge X_k \in CX^k_\alpha(\cX_k).
\]
\end{lemma}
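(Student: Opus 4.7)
The plan is to verify directly that the anti-symmetric cochain $\theta := \mathbf{1} \wedge X_1 \wedge \cdots \wedge X_k$ satisfies the support condition for membership in $CX^k_\alpha(\cX_k)$; anti-symmetry and local boundedness as a Borel function on $M^{k+1}$ are automatic from the wedge construction and the fact that the $X_j$ are $\{0,1\}$-valued Borel functions. The key observation is that $\theta$ is literally a determinant: setting $X_0 := \mathbf{1}$, the Leibniz formula for the determinant gives
\[
\theta(x_0, \ldots, x_k) = \det\bigl(X_i(x_j)\bigr)_{0 \leq i,j \leq k}.
\]

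Non-vanishing of this $\{0,1\}$-valued determinant rules out each row $i\in\{1,\dots,k\}$ being either identically zero or identically one on $\{x_0, \dots, x_k\}$, because in the latter case row $i$ would coincide with the all-ones row $X_0$. Hence if $\theta(x_0, \dots, x_k) \neq 0$, then for every $i \in \{1, \dots, k\}$ there exist indices $j_1, j_2$ with $x_{j_1} \in X_i$ and $x_{j_2} \in X_i^c$. If in addition $(x_0, \dots, x_k) \in \Delta_R$, then every coordinate $x_m$ satisfies $d(x_m, x_{j_1}), d(x_m, x_{j_2}) \leq R$, forcing $x_m \in (X_i)_R \cap (X_i^c)_R$ for each such $i$.

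To finish, take any member $Y$ of $\cX_k = \partial X_{k+1} \Cap \cdots \Cap \partial X_n$, so that $Y \subseteq (X_i)_{R'} \cap (X_i^c)_{R'}$ for some $R'$ and all $i = k+1, \dots, n$. Setting $R'' := \max(R, R')$, every coordinate of a point in $\supp(\theta) \cap \Delta_R \cap Y^{k+1}$ must lie in
\[
\bigcap_{i=1}^n \bigl((X_i)_{R''} \cap (X_i^c)_{R''}\bigr),
\]
which is bounded by the coarse transversality of $X_1, \dots, X_n$. Consequently $\supp(\theta) \cap \Delta_R \cap Y^{k+1}$ is contained in a product of bounded sets, so is bounded in $M^{k+1}$, as required. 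I do not anticipate a real obstacle here: the substance of the argument is the determinantal reformulation, after which the support bound is an immediate consequence of coarse transversality of the full collection $X_1, \dots, X_n$. The edge cases $k=0$ (where the determinantal constraint is vacuous and boundedness comes entirely from $Y$) and $k=n$ (where it comes entirely from $\Delta_R$, since $\cX_n = \{M\}$) are handled uniformly by this argument.
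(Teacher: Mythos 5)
Your proof is correct and takes essentially the same approach as the paper's: both arguments extract, for each $i\in\{1,\dots,k\}$, one coordinate of $(x_0,\dots,x_k)$ lying in $X_i$ and another in $X_i^c$ from the non-vanishing of $\theta$, then conclude boundedness of the support via coarse transversality of the full collection. Your determinantal identity $\theta(x_0,\dots,x_k)=\det\bigl(X_i(x_j)\bigr)$ is a clean repackaging of the paper's antisymmetry manipulation $\mathbf{1}\wedge X_1\wedge\cdots\wedge X_k = X_i^c\wedge X_1\wedge\cdots\wedge X_k$; the two encode the same observation (the only small slip is that $\Delta_R$ gives $d(x_m,x_{j})\le 2R$ rather than $R$, which does not affect the conclusion).
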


\begin{proof}
We note that for each $i=1, \dots, k$, we may split up $\mathbf{1} = X_i + X_i^c$, hence by anti-symmetry, we have
\begin{equation}
\label{TrickyRewritten}
 \mathbf{1} \wedge X_1 \wedge \cdots \wedge X_k = X_i^c \wedge X_1 \wedge \cdots \wedge X_k.
\end{equation}
Let $R>0$ and take 
\[
x = (x_0, \dots, x_k) ~~\in~~ \supp(\mathbf{1} \wedge X_1 \wedge \cdots \wedge X_k) \cap \Delta_R \cap \left(\bigcap_{i={k+1}}^n (X_i)_R \cap (X_i^c)_R\right)^{k+1}.
\]
By \eqref{TrickyRewritten}, for each fixed $i=1, \dots, k$, there exists an index $0 \leq a \leq k$ such that $x_a \in X_i^c$ and another index $b$ such that $x_b \in X_i$.
But because $x \in \Delta_R$, we have $d(x_a, x_c) \leq 2R$ and $d(x_b, x_c) \leq 2R$ for all $0 \leq c\leq k$.
In total, we conclude that $x_c \in (X_i)_{2R} \cap (X_i^c)_{2R}$. 
We conclude that each coordinate $x_c$ is in fact contained in the set
\[
\left(\bigcap_{i=1}^k (X_i)_{2R} \cap (X_i^c)_{2R}\right) \cap \left(\bigcap_{i=k+1}^n (X_i)_{2R} \cap (X_i^c)_{2R}\right) ,
\]
which is bounded because $X_1, \dots, X_n$ are coarsely transverse.
\end{proof}

Observe that for each $1 \leq k \leq n$, we have a Mayer-Vietoris decomposition
\begin{align*}
\cX_{k} &= \big(\cX_{k} \Cap \{X_k\} \big) \Cup \big(\cX_{k} \Cap \{X_k^c\}  \big), \\
 \cX_{k-1} &= \big( \cX_{k} \Cap \{X_k\} \big) \Cap \big( \cX_{k} \Cap \{X_k^c\}  \big),
\end{align*}
so we obtain Mayer-Vietoris boundary maps
\[
\delta_{\mathrm{MV}} : HX^{\bullet-1}(\cX_{k-1}) \longrightarrow HX^{\bullet}(\cX_k), \rlap{\qquad $k=1, \dots, n$.}
\]
In particular, $HX^0(\cX_0) \cong \C$, the identification given by mapping $1 \in \C$ to the class of the constant function $\mathbf{1}$, so via an iterated application of the Mayer-Vietoris boundary map, we may iteratively define coarse cohomology classes $[\theta_k] \in HX^k(\cX_k)$, $k=0, \dots, n$,  by
\begin{equation}
\label{CompatibilityThetaKboundary}
[\theta_k] = \delta_{\mathrm{MV}}[\theta_{k-1}], \qquad \theta_0 = \mathbf{1}.
\end{equation}

\begin{lemma}
\label{LemmaFormulaThetak}
For each $0 \leq k \leq n$, the above cohomology class can be represented by
\begin{equation}
\label{ThetaCocycle}
\theta_k = (-1)^{\frac{k(k+1)}{2}}\frac{1}{k!} \cdot \mathbf{1} \wedge X_1 \wedge \cdots \wedge X_k.
\end{equation}
\end{lemma}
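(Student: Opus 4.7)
The plan is induction on $k$. The base case $k=0$ is trivial: the formula reads $(-1)^0/0!\cdot\mathbf{1} = \mathbf{1} = \theta_0$. For the inductive step, I would assume $[\theta_{k-1}] = [c_{k-1}\cdot \mathbf{1}\wedge X_1\wedge\cdots\wedge X_{k-1}]$ with $c_{k-1} = (-1)^{(k-1)k/2}/(k-1)!$ and apply the explicit Mayer--Vietoris boundary description from Remark~\ref{RemarkAntiSymmetricVersion} with the decomposition $\cX_k = (\cX_k\Cap\{X_k\})\Cup(\cX_k\Cap\{X_k^c\})$ (whose intersection is $\cX_{k-1}$). The cocycle $\delta_{\mathrm{MV}}[\theta_{k-1}]$ is then represented by $[\delta\tilde\theta_{k-1}]$, where
\[
\tilde\theta_{k-1} := \frac{1}{k}\sum_{i=0}^{k-1}\bigl(\mathbf{1}^{\otimes i}\otimes X_k^c\otimes\mathbf{1}^{\otimes(k-1-i)}\bigr)\cdot\theta_{k-1} \;\in\; CX^{k-1}_\alpha\bigl(\{X_k\}\Cap\cX_k\bigr)
\]
is the anti-symmetric lift obtained by multiplying with $X_k^c$ in each slot and averaging.

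The key computation simplifies the Alexander--Spanier differential of this lift. Since $\phi := \mathbf{1}\wedge X_1\wedge\cdots\wedge X_{k-1}$ is anti-symmetric, $\tilde\theta_{k-1}(x_0,\dots,x_{k-1}) = \tfrac{c_{k-1}}{k}\bigl(\sum_i X_k^c(x_i)\bigr)\phi(x_0,\dots,x_{k-1})$, so
\[
\delta\tilde\theta_{k-1}(x_0,\dots,x_k) = \frac{c_{k-1}}{k}\sum_{j=0}^k(-1)^j\Bigl(\sum_{i\neq j}X_k^c(x_i)\Bigr)\phi(x_0,\dots,\hat x_j,\dots,x_k).
\]
Splitting $\sum_{i\neq j}X_k^c(x_i) = \sum_i X_k^c(x_i) - X_k^c(x_j)$, the first piece pulls out a $j$-independent factor multiplying $\sum_j(-1)^j\phi(\hat x_j) = (\delta\phi)(x_0,\dots,x_k)$, which vanishes because $\delta\phi = \mathbf{1}\wedge\phi$ contains two $\mathbf{1}$'s. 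The remaining piece reassembles, via the standard identity $(g\wedge\phi)(x_0,\dots,x_k) = \sum_j(-1)^jg(x_j)\phi(\hat x_j)$ for $0$-cochains $g$, into
\[
\delta\tilde\theta_{k-1} \;=\; -\frac{c_{k-1}}{k}\cdot X_k^c\wedge\mathbf{1}\wedge X_1\wedge\cdots\wedge X_{k-1}.
\]

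To recognise this as $\theta_k$, I would substitute $X_k^c = \mathbf{1} - X_k$: the $\mathbf{1}$-contribution vanishes once more by duplicated $\mathbf{1}$'s, leaving $-X_k\wedge\mathbf{1}\wedge X_1\wedge\cdots\wedge X_{k-1}$, and moving $X_k$ through the $k$ subsequent factors to the last slot introduces a sign $(-1)^k$. This gives $\delta\tilde\theta_{k-1} = \tfrac{(-1)^k c_{k-1}}{k}\mathbf{1}\wedge X_1\wedge\cdots\wedge X_k$, and the elementary identity $(k-1)k/2 + k = k(k+1)/2$ confirms $(-1)^kc_{k-1}/k = c_k$, completing the induction. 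The main obstacle is careful sign bookkeeping; the conceptual engine is the crucial vanishing $\delta\phi = 0$, which relies on the leading $\mathbf{1}$ factor in $\phi$ producing a duplicate upon applying $\delta$.
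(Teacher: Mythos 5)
Your proof is correct and follows essentially the same route as the paper's: both compute $\delta\tilde\theta_{k-1}$ for the anti-symmetric lift from Remark~\ref{RemarkAntiSymmetricVersion}, reducing it to (a constant times) $X_k^c\wedge\mathbf{1}\wedge X_1\wedge\cdots\wedge X_{k-1}$ via the observation that $\delta$ kills any wedge containing a repeated $\mathbf{1}$. The only cosmetic difference is that the paper distributes the averaging sum inside $\tilde\theta_{k-1}$ before applying $\delta$ (leaving $X_k^c\wedge X_1\wedge\cdots\wedge X_{k-1}$ as the sole surviving summand), whereas you apply $\delta$ first and then split coordinate-wise into the $\delta\phi=0$ piece and the $X_k^c\wedge\phi$ piece.
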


\begin{proof}
We prove the statement of the lemma by induction. 
For $k=0$, there is nothing to show.
Suppose now that we want to establish the result for $k \geq 1$, knowing the result for $k-1$.
Now by Remark~\ref{RemarkAntiSymmetricVersion}, we have $\delta_{\mathrm{MV}}[\theta_{k-1}] = [\delta \tilde{\theta}_{k-1}]$ with $\tilde{\theta}_{k-1}$ given by formula \eqref{AntisymmetricLocalization}. Explicitly, we have
\begin{align*}
\tilde{\theta}_{k-1} 
&= \frac{(-1)^{\frac{k(k-1)}{2}}}{k!} \cdot \left(\sum_{i=0}^{k-1} {\underbrace{\mathbf{1} \otimes \cdots \otimes \mathbf{1}}_i} \otimes X_k^c \otimes {\underbrace{\mathbf{1} \otimes \cdots \otimes \mathbf{1}}_{k-1-i}}\right)\cdot \mathbf{1} \wedge X_1 \wedge \cdots \wedge X_{k-1}
\\
&= \frac{(-1)^{\frac{k(k-1)}{2}}}{k!} \cdot \left(X_k^c \wedge X_1 \wedge \cdots \wedge X_{k-1} + \sum_{i=1}^{k-1} \mathbf{1} \wedge X_1 \wedge \cdots \wedge X_k^c X_i \wedge \cdots \wedge X_{k-1} \right).
\end{align*}
Then, using \eqref{SimpleASDifferential}, we get
\begin{align*}
[\theta_k]=\delta_{\mathrm{MV}}[\theta_{k-1}] 
= [\delta \tilde{\theta}_{k-1}]
&= \frac{(-1)^{\frac{k(k-1)}{2}}}{k!} \cdot \big[\mathbf{1} \wedge X_k^c \wedge X_1 \wedge \cdots \wedge X_{k-1}\big]
\\
&= \frac{(-1)^{\frac{k(k-1)}{2}}}{k!} (-1)^k\cdot \big[\mathbf{1} \wedge X_1 \wedge \cdots \wedge X_{k-1} \wedge X_k\big],
\end{align*}
which is the claimed result \eqref{ThetaCocycle}.
\end{proof}

We finish this section with a lemma for converting switch functions to half-spaces.

\begin{lemma}\label{LemSwitchToHalfSpace}
Let $\chi_1,\ldots,\chi_n$ be a coarsely transverse collection of switch functions on $M$. 
Then $\mathbf{1} \wedge \chi_1 \wedge \cdots \wedge \chi_n$ is a coarse cochain on $M$, and
\[
\big[\mathbf{1}\wedge\chi_1\wedge\dots\wedge \chi_n\big]=\big[\mathbf{1}\wedge X_1\wedge\cdots\wedge X_n\big] \in HX^n(M),
\]
where $X_i=\supp(\chi_i)$, $i=1,\ldots,n$.
\end{lemma}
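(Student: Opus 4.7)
The plan is to verify the two assertions of the lemma in turn, adapting arguments from earlier in the section.

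\emph{Coarse cochain property.} The verification will mirror the proof of Lemma~\ref{LemShortenedSwitchCochain}. The identity $\mathbf{1} = \chi_i + (1-\chi_i)$ combined with antisymmetry of the wedge allows me to cancel the $\chi_i$-part (which would duplicate an existing $\chi_i$ factor) and rewrite
\[
\mathbf{1} \wedge \chi_1 \wedge \cdots \wedge \chi_n = (1-\chi_i) \wedge \chi_1 \wedge \cdots \wedge \chi_n
\]
for each $i = 1, \ldots, n$. Together with the $2R$-proximity forced by $\Delta_R$, this places every coordinate of a point in $\supp(\mathbf{1}\wedge\chi_1\wedge\cdots\wedge\chi_n)\cap\Delta_R$ into the intersection $\bigcap_{i=1}^n \supp(\chi_i)_{2R}\cap\supp(1-\chi_i)_{2R}$, which is bounded by coarse transversality of the switch functions.

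\emph{Cohomology equality.} I will construct an $(n-1)$-cochain $\eta \in CX^{n-1}(\{M\})$ with $\delta\eta = \omega_0 - \omega_n$, where $\omega_0 := \mathbf{1}\wedge\chi_1\wedge\cdots\wedge\chi_n$ and $\omega_n := \mathbf{1}\wedge X_1\wedge\cdots\wedge X_n$. Setting $\xi_i := \chi_i - X_i$, expanding $\chi_i = X_i + \xi_i$ yields
\[
\chi_1\wedge\cdots\wedge\chi_n - X_1\wedge\cdots\wedge X_n = \sum_{\emptyset \ne S \subseteq \{1,\ldots,n\}}\bigwedge_{i\in S}\xi_i\,\wedge\,\bigwedge_{j\notin S}X_j.
\]
The top term with $S = \{1,\ldots,n\}$ is $\xi_1\wedge\cdots\wedge\xi_n$, whose support is contained in the bounded set $\bigcap_i \supp(\chi_i)\cap\supp(1-\chi_i)$ by coarse transversality; this is a genuine coarse $(n-1)$-cochain whose Alexander--Spanier coboundary realises the corresponding summand of $\omega_0 - \omega_n$. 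For the remaining mixed summands with $|S| < n$, the naive candidate coboundary is \emph{not} coarse, so I will proceed by induction on $n$. The base case $n=1$ is immediate: $\eta = \chi_1 - X_1$ has bounded support. For the inductive step, I invoke Mayer--Vietoris for the coarse decomposition $M = \supp(\chi_n) \cup \supp(1-\chi_n)$, and similarly for the half-space partition $M = X_n \sqcup X_n^c$; as in Lemma~\ref{LemmaFormulaThetak}, both $[\omega_0]$ and $[\omega_n]$ arise as iterated MV boundaries of $[\mathbf{1}] \in HX^0(\cB) \cong \C$ from these two decompositions. Naturality of the MV boundary along the inclusion $\partial X_n \subseteq \partial_\chi X_n$ of big families, combined with the inductive hypothesis applied to $[\mathbf{1}\wedge\chi_1\wedge\cdots\wedge\chi_{n-1}]$ versus $[\mathbf{1}\wedge X_1\wedge\cdots\wedge X_{n-1}]$, then yields the equality in $HX^n(\{M\})$.

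The main obstacle will be rigorously establishing the compatibility between the two Mayer--Vietoris constructions in the inductive step: the switch-cover and the half-space-partition give rise to different short exact sequences of coarse cochain complexes, and one must construct a chain-level comparison intertwining their boundary maps so that the two iterated MV classes coincide in $HX^n(\{M\})$.
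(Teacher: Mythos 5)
Your verification of the first assertion (that $\mathbf{1}\wedge\chi_1\wedge\cdots\wedge\chi_n$ is a coarse cochain) is correct and matches the paper's approach, which indeed reduces to the argument of Lemma~\ref{LemShortenedSwitchCochain} after noting that each factor $\chi_i$ can be rewritten, modulo antisymmetry, to involve $\mathbf{1}-\chi_i$.

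For the cohomology identity your plan has a genuine gap, and you have correctly identified where it is. The all-at-once expansion $\chi_i = X_i + \xi_i$ produces mixed terms $\bigwedge_{i\in S}\xi_i\wedge\bigwedge_{j\notin S}X_j$ with $|S|<n$ whose naive primitives are not coarse, and the pivot to a Mayer--Vietoris induction leaves exactly the hard point open: the two iterated MV boundaries are built from \emph{different} short exact sequences of cochain complexes --- one cutting by the non-disjoint cover $(\supp\chi_n,\supp(\mathbf{1}-\chi_n))$, the other by the disjoint partition $(X_n,X_n^c)$ --- and the intermediate classes live in different groups $HX^{n-1}(\cY_{n-1})$ and $HX^{n-1}(\cX_{n-1})$. ``Naturality along the inclusion $\partial X_n\subseteq\partial_\chi X_n$'' does not by itself identify a $\chi_n$-cutoff boundary with an $X_n$-cutoff boundary; making that precise would require a new compatibility lemma that is not formulated anywhere in this section.

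The paper's proof sidesteps all of this by replacing the $\chi_i$ by $X_i$ \emph{one factor at a time}, so that in each step only a single $W_i:=\chi_i-X_i$ appears. Concretely, the telescoping difference
\[
\mathbf{1}\wedge\chi_1\wedge\dots\wedge\chi_i\wedge X_{i+1}\wedge\dots\wedge X_n
\;-\;
\mathbf{1}\wedge\chi_1\wedge\dots\wedge\chi_{i-1}\wedge X_i\wedge\dots\wedge X_n
=
\delta\bigl(\chi_1\wedge\dots\wedge\chi_{i-1}\wedge W_i\wedge X_{i+1}\wedge\dots\wedge X_n\bigr),
\]
and the primitive on the right can be rewritten, modulo terms killed by $\delta$, as the $W_i$-cutoff (in the sense of \eqref{AntisymmetricLocalization}) of the $(n-1)$-cochain $\chi_1\wedge\dots\wedge\chi_{i-1}\wedge\mathbf{1}\wedge X_{i+1}\wedge\dots\wedge X_n$. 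This last cochain already lies in $CX^{n-1}_\alpha(\cY_i)$ with $\cY_i=\{\supp(\chi_i)\}\Cap\{\supp(\mathbf{1}-\chi_i)\}$ by the argument of Lemma~\ref{LemShortenedSwitchCochain}, and since $W_i$ is supported on a member of $\cY_i$, Lemma~\ref{LemmaLocalization} promotes the cutoff to a coarse $(n-1)$-cochain on all of $M$. Induction on $i$ then gives the claim with no appeal to Mayer--Vietoris at all. The lesson is that the one-at-a-time substitution precisely avoids the mixed terms you correctly diagnosed as problematic: keeping all-but-one factor as a function and letting only a single $W_i$ act as the localization produces a cochain whose primitive is honestly coarse.
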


\begin{proof}
Setting $W_i := \chi_i - X_i$ ($i=1, \dots, n$) and using \eqref{SimpleASDifferential}, one obtains the cochain identity
\begin{align*}
&\mathbf{1}\wedge \chi_1\wedge \dots\wedge\chi_{i}\wedge X_{i+1} \wedge\dots \wedge X_n - \mathbf{1}\wedge \chi_1\wedge \dots\wedge \chi_{i-1}\wedge X_i \wedge\dots \wedge X_n\\
& =\delta\big(
\chi_1\wedge \dots \wedge \chi_{i-1}\wedge W_i\wedge X_{i+1}\wedge \dots \wedge X_n
\big)
\\
& =\delta\Bigg(
\sum_{j=1}^n\big({\underbrace{\mathbf{1} \otimes \cdots \otimes \mathbf{1}}_{j-1}} \otimes W_i \otimes {\underbrace{\mathbf{1} \otimes \cdots \otimes \mathbf{1}}_{n-j}}\big)\cdot
\big(
\chi_1\wedge\dots\wedge \chi_{i-1}\wedge \mathbf{1}\wedge X_{i+1}\wedge \dots\wedge X_n
\big)
\Bigg),
\end{align*}
where in the second step, we use that all summands still containing a factor of $\mathbf{1}$ vanish after applying $\delta$, in light of \eqref{SimpleASDifferential}.

Now, the proof of Lemma~\ref{LemShortenedSwitchCochain} generalizes to show that $\mathbf{1}\wedge \chi_1\wedge \dots\wedge\chi_{i-1}\wedge X_{i+1} \wedge\dots \wedge X_n$ is contained in $CX^{n-1}_\alpha(\cY_i)$ with
\[
\cY_i = \{\supp(\chi_i)\} \Cap \{\supp(\mathbf{1} - \chi_i)\}.
\]
Since $\mathrm{supp}(W_i) \in \cY_i$, Lemma~\ref{LemmaLocalization} (and Remark~\ref{RemarkAntiSymmetricVersion}) yields that the term in the parentheses above is contained in $CX^{n-1}_\alpha(M)$.
Therefore, one obtains that if $\mathbf{1}\wedge \chi_1\wedge \dots\wedge\chi_{i}\wedge X_{i+1} \wedge\dots \wedge X_n$ is a coarse cochain on $M$, then so is $\mathbf{1}\wedge \chi_1\wedge \dots\wedge\chi_{i-1}\wedge X_{i} \wedge\dots \wedge X_n$, and both define the same cohomology class.
This implies the result by induction on $i$.
\end{proof}

\subsection{Coarse cohomology classes from partitions}
\label{SectionPartitionClasses}

Let $M$ be a proper metric space.

\begin{lemma}
\label{LemmaPartitionCochain}
Let $A_0, \dots, A_n \subseteq M$ be a collection of Borel subsets.
\begin{enumerate}[{\normalfont(1)}]
\item	
If $\cY$ is a big family such that 
\begin{equation}
\label{CoarseTransversalityOfAi}
\text{all members of} \qquad \{A_0\} \Cap \cdots \Cap\{A_n\} \Cap \cY\qquad \text{are bounded},
\end{equation}
then the cochain $A_0 \wedge\cdots \wedge A_n$ is contained in $CX^n_\alpha(\cY)$.
\item
If the subsets form a partition, then $A_0 \wedge\cdots \wedge A_n$ is Alexander-Spanier closed.
\end{enumerate}
\end{lemma}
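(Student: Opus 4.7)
For part (1), I would verify the support condition directly. The anti-symmetrization $A_0 \wedge \cdots \wedge A_n = \sum_{\sigma \in S_{n+1}} \sgn(\sigma) \cdot A_{\sigma_0} \otimes \cdots \otimes A_{\sigma_n}$ has support contained in the union, over permutations $\sigma$, of the products $A_{\sigma_0} \times \cdots \times A_{\sigma_n}$. Fix $R > 0$ and $Y \in \cY$, and let $(x_0, \dots, x_n)$ lie in the intersection $\supp(A_0 \wedge \cdots \wedge A_n) \cap \Delta_R \cap Y^{n+1}$. Then there is some $\sigma$ with $x_i \in A_{\sigma_i}$ for each $i$; combined with the $\Delta_R$-condition $d(x_i, x_j) \leq R$, this forces each coordinate $x_j$ to lie within distance $R$ of a point of $A_i$ for every $i$, i.e., $x_j \in \bigcap_{i=0}^n (A_i)_R$. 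Since also $x_j \in Y$, we conclude
\[
\supp(A_0 \wedge \cdots \wedge A_n) \cap \Delta_R \cap Y^{n+1} \subseteq \Big(Y \cap \bigcap_{i=0}^n (A_i)_R\Big)^{n+1}.
\]
The set in brackets on the right belongs to $\{A_0\} \Cap \cdots \Cap \{A_n\} \Cap \cY$ and is therefore bounded by hypothesis, verifying that $A_0 \wedge \cdots \wedge A_n \in CX^n_\alpha(\cY)$.

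For part (2), I would use the simple formula \eqref{SimpleASDifferential} for the Alexander--Spanier differential on wedges, namely $\delta(A_0 \wedge \cdots \wedge A_n) = \mathbf{1} \wedge A_0 \wedge \cdots \wedge A_n$. Since the $A_i$ form a partition of $M$, the indicator functions satisfy $\mathbf{1} = \sum_{i=0}^n A_i$, and substituting this yields
\[
\delta(A_0 \wedge \cdots \wedge A_n) = \sum_{i=0}^n A_i \wedge A_0 \wedge \cdots \wedge A_n.
\]
Each summand contains the factor $A_i$ twice in the wedge product, hence vanishes by anti-symmetry, giving the claim.

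Neither step poses any real obstacle; the only subtlety is making sure one parses the definition of $\{A_0\} \Cap \cdots \Cap \{A_n\}$ correctly in (1), namely that this big family consists of subsets of intersections of thickenings, so that boundedness of this family immediately feeds into the support estimate with arbitrary parameter $R$.
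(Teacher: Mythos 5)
Your proof is correct and follows the same approach as the paper for both parts: a direct support-condition check for (1), and the formula $\delta(A_0\wedge\cdots\wedge A_n)=\mathbf{1}\wedge A_0\wedge\cdots\wedge A_n$ combined with $\mathbf{1}=\sum_i A_i$ for (2). One small slip in (1): membership in $\Delta_R$ (with the max metric on $M^{n+1}$) gives $d(x_i,x_j)\leq 2R$ rather than $\leq R$, so the correct inclusion is into $\bigl(Y\cap\bigcap_i (A_i)_{2R}\bigr)^{n+1}$; this is harmless since the hypothesis on $\{A_0\}\Cap\cdots\Cap\{A_n\}\Cap\cY$ is quantified over all thickening radii.
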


In particular, it follows from this lemma that if $A_0, \dots, A_n$ is a coarsely transverse partition, so \eqref{CoarseTransversalityOfAi} holds for $\cY=\{M\}$, then $A_0 \wedge \cdots \wedge A_n$ is a coarse cocycle on all of $M$, hence defines a coarse cohomology class
\[
[A_0 \wedge \cdots \wedge A_n] \in HX^n(M).
\] 

\begin{proof}
(1) Suppose that \eqref{CoarseTransversalityOfAi} holds.
 For some given $R>0$ and a member $Y$ of $\cY$, let 
\[x = (x_0, \dots, x_n) \in \supp(A_0 \wedge \cdots \wedge A_n) \cap\Delta_R\cap Y^{n+1}.
\]
For each $i =0, \dots, n$, there exists at least one $x_a$ with $x_a \in A_i$.
Since $x \in \Delta_R$, we have $d(x_b, x_a) \leq 2R$ for any $b$, so we have $x_b \in (A_i)_{2R}$. 
Because $i$ was arbitrary, we obtain that $x_b \in (A_0)_{2R} \cap \cdots \cap (A_n)_{2R}\cap Y$ for each $b$.
This shows that $\supp(A_0 \wedge \cdots \wedge A_n) \cap \Delta_R \cap Y^{n+1}$ is bounded.

(2) If the sets $A_i$ form a partition, then using the formula  \eqref{SimpleASDifferential} for $\delta$ and anti-symmetry, we get
\[
\delta(A_0 \wedge \cdots \wedge A_n) 
= \mathbf{1} \wedge A_0 \wedge \cdots \wedge A_n
= \sum_{i=0}^n A_i \wedge A_0 \wedge \cdots \wedge A_n = 0. \qedhere
\]
\end{proof}

We are interested in the relation between cohomology classes corresponding to coarsely transverse partitions and those corresponding to collections of coarsely transverse half-spaces.
We will show below that each half-space class is actually an integer multiple of a partition class.

Generalizing Example~\ref{ExampleEuclideanHalfSpaces}, the following prescription produces a coarsely transverse partition from any given  coarsely transverse collection of half-spaces.

\begin{lemma}\label{LemPartitionFromHalfSpaces}
If $X_1, \ldots, X_n \subseteq M$ are arbitrary Borel subsets of $M$, then the sets
\begin{align*}
A_0 & = X_1\cdots X_n,\\
A_i & = X_i^c X_{i+1}\cdots X_n,\qquad i=1,\ldots,n,
\end{align*}
form a partition of $M$.
Moreover, if the $X_i$ form a  coarsely transverse collection of half-spaces, then this partition is coarsely transverse.
\end{lemma}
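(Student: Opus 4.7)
The statement is a direct combinatorial verification, so the plan is simply to check the three claims (disjointness, covering, and coarse transversality) in turn.

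\textbf{Step 1 (Disjointness).} The key observation is that for every $j \in \{1,\dots,n\}$, the set $A_j$ has $X_j^c$ as a factor, while $A_i$ for $i<j$ has $X_j$ as a factor (since the factorization of $A_i$ for $i\leq n-1$ runs $X_{i+1}\cdots X_n$, and similarly $A_0 = X_1\cdots X_n$ contains every $X_j$). Hence $A_i \subseteq X_j$ and $A_j \subseteq X_j^c$ are disjoint whenever $i<j$.

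\textbf{Step 2 (Covering).} I would argue by a downward induction: setting $B_k := X_{k+1}X_{k+2}\cdots X_n$ with the convention $B_n = \mathbf{1}$, the identity $X_k \cup X_k^c = M$ yields
\[
A_0 \cup A_1 \cup \cdots \cup A_k = (X_k \cup X_k^c)\cdot X_{k+1}\cdots X_n = B_k
\]
by induction on $k$, starting from $A_0 \cup A_1 = (X_1 \cup X_1^c)X_2\cdots X_n = B_1$. At $k = n-1$ this gives $B_{n-1} = X_n$, and adjoining $A_n = X_n^c$ produces all of $M$.

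\textbf{Step 3 (Coarse transversality).} Assume now that $X_1,X_1^c,\dots,X_n,X_n^c$ are coarsely transverse. By construction, $A_0 \subseteq X_i$ and $A_i \subseteq X_i^c$ for every $i = 1,\dots,n$. Monotonicity of the $R$-thickening operation therefore gives
\[
(A_0)_R \cap (A_i)_R \;\subseteq\; (X_i)_R \cap (X_i^c)_R, \qquad i=1,\dots,n.
\]
Intersecting over $i$ yields
\[
\bigcap_{i=0}^n (A_i)_R \;\subseteq\; \bigcap_{i=1}^n \bigl((X_i)_R \cap (X_i^c)_R\bigr),
\]
and the right-hand side is bounded for every $R\geq 0$ by the assumed coarse transversality of the half space collection. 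This is precisely the coarse transversality of $A_0,\dots,A_n$ in the sense of Definition~\ref{dfn:partition}.

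There is no real obstacle here --- the only technical point to keep in mind is the one-sided containment $(X\cap Y)_R \subseteq X_R \cap Y_R$ recorded in \eqref{UnionIntersectionInclusions}, which is exactly what is needed in Step 3 (and which is why the argument works with the mild half-space hypothesis rather than requiring anything stronger).
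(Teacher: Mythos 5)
Your proof is correct and follows essentially the same elementary verification as the paper: the paper checks the partition property via the telescoping sum $\sum_{i=0}^n A_i = \mathbf{1}$ (which, for indicator functions, gives disjointness and covering simultaneously), while you split this into separate disjointness and covering steps, and the coarse-transversality argument via monotonicity of thickenings is identical in both.
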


\begin{proof}
The $A_i$ defined above indeed form a partition due to the telescoping sum
\begin{align*}
\sum_{i=0}^n A_i &=X_1 X_2 X_3\cdots X_n + X_1^c X_2 X_3\cdots X_n + X_2^c X_3\cdots X_n\dots + X_{n-1}^c X_n+X_n^c\\
&=X_2 X_3\cdots X_n+X_2^c X_3\cdots X_n+\dots + X_{n-1}^c X_n+X_n^c\\
& ~~\vdots
\\
&= X_n+X_n^c=\mathbf{1}.
\end{align*}
For any $R>0$, we have
\begin{align*}
\bigcap_{i=0}^n(A_i)_R
&=\big(X_1 X_2\cdots X_n\big)_R\cap \big(X_1^c X_2\cdots X_n\big)_R\cap\dots \cap \big(X_n^c\big)_R\\
&\subseteq \big(X_1\big)_R\cap \dots \cap \big(X_n\big)_R\cap \big(X_1^c\big)_R\cap \dots \cap \big(X_n^c\big)_R.
\end{align*}
It follows that if $X_1, \ldots, X_n$ are coarsely transverse half-spaces, then $A_0,\ldots, A_n$ are coarsely transverse subsets.
\end{proof}

Similarly, for each permutation $\sigma\in S_n$, we have a partition 
\begin{equation}
\label{eqn:PermutedPartitions}
\begin{aligned}
A^\sigma_0 & = X_{\sigma_1}X_{\sigma_2}\cdots X_{\sigma_n}, \\
A^\sigma_i & = X_{\sigma_i}^cX_{\sigma_{i+1}}\cdots X_{\sigma_n}, 
\rlap{\qquad $(i=1,\ldots,n)$,}
\end{aligned}
\end{equation}
of $M$, which is coarsely transverse if $X_1,\ldots,X_n$ are coarsely transverse half-spaces. 
In that case, $A^\sigma_0\wedge \dots \wedge A^\sigma_n$ is a coarse $n$-cocycle on $M$.

\begin{proposition}[Sum rule]\label{PropSumOfPartitions}
For coarsely transverse half-spaces $X_1,\ldots, X_n$ in $M$, the formula
\[
[\mathbf{1}\wedge X_1\wedge \dots \wedge X_n]=(-1)^n\cdot \sum_{\sigma\in S_n}\sgn(\sigma)\cdot[A^\sigma_0\wedge \dots \wedge A^\sigma_n] \]
holds in $HX^n(M)$.
\end{proposition}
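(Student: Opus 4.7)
The plan is to reduce the claim to a more concrete cohomological identity via a chain-level telescoping, and then establish that identity by exhibiting an explicit coarse primitive.

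\emph{Telescoping reduction.} Define the staircase sets $B^\sigma_k := X_{\sigma_{k+1}} \cdots X_{\sigma_n}$ for $k = 0, \dots, n-1$ and $B^\sigma_n := \mathbf{1}$, so that $A^\sigma_0 = B^\sigma_0$ and $A^\sigma_k = B^\sigma_k - B^\sigma_{k-1}$ for $k \ge 1$. Iteratively substituting these differences into the wedge $A^\sigma_0 \wedge A^\sigma_1 \wedge \cdots \wedge A^\sigma_n$, and using that any wedge with a repeated factor vanishes, yields the chain-level identity
\[
A^\sigma_0 \wedge A^\sigma_1 \wedge \cdots \wedge A^\sigma_n = B^\sigma_0 \wedge B^\sigma_1 \wedge \cdots \wedge B^\sigma_n.
\]
Cyclically moving the last factor $B^\sigma_n = \mathbf{1}$ to the front introduces a sign $(-1)^n$, canceling the $(-1)^n$ in the target statement. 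The claim therefore reduces to
\[
[\mathbf{1} \wedge X_1 \wedge \cdots \wedge X_n] = \sum_{\sigma \in S_n}\sgn(\sigma)\cdot [\mathbf{1} \wedge B^\sigma_0 \wedge B^\sigma_1 \wedge \cdots \wedge B^\sigma_{n-1}] \in HX^n(M).
\]

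\emph{Seeking a coarse primitive.} Each side of this reduced identity is an Alexander-Spanier coboundary in the larger complex of all locally bounded anti-symmetric cochains: the left is $\delta(X_1 \wedge \cdots \wedge X_n)$, and the right is $\delta\big(\sum_\sigma \sgn(\sigma) B^\sigma_0 \wedge \cdots \wedge B^\sigma_{n-1}\big)$. Neither primitive, however, lies in $CX^{n-1}_\alpha(M)$: the diagonal $(x, \dots, x)$ belongs to $\supp(B^\sigma_0 \wedge \cdots \wedge B^\sigma_{n-1})$ whenever $x \in A_0 = X_1 \cap \cdots \cap X_n$, which is typically unbounded. The task is thus to produce a coarse anti-symmetric primitive $\eta \in CX^{n-1}_\alpha(M)$ with $\delta \eta$ equal to the chain-level difference of the two sides. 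For $n = 2$, direct expansion in the four Boolean atoms $X_1^{\epsilon_1} X_2^{\epsilon_2}$ reveals the primitive $\eta = X_1 X_2^c \wedge X_1^c X_2$; this lies in $CX^1_\alpha(M)$ because, by coarse transversality of $\{X_1, X_2\}$, the support of $\eta$ near the diagonal is controlled by the bounded big family $\partial X_1 \Cap \partial X_2$.

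\emph{General-$n$ strategy.} For arbitrary $n$, I would attempt an induction on $n$. Grouping permutations $\sigma \in S_n$ by the value $j := \sigma_n$, the rightmost wedge factor becomes $B^\sigma_{n-1} = X_j$, and the remaining factors satisfy $B^\sigma_k = X_j \cdot B^{\sigma'}_k$ where $\sigma'$ runs over permutations of $\{1,\dots,n\} \setminus \{j\}$. Factoring out $X_j$ and invoking the inductive hypothesis within the slice $X_j$ --- where the restrictions $X_i \cap X_j$ for $i \ne j$ form a coarsely transverse collection of half spaces --- should reduce each sub-sum to an expression proportional to $\mathbf{1} \wedge X_1 \wedge \cdots \wedge X_n$ modulo Alexander-Spanier coboundaries of coarse cochains. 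The principal obstacle lies precisely in this inductive step: one must carefully track signs from the bijection between permutations with $\sigma_n = j$ and permutations of $\{1,\dots,n-1\}$, and construct the inductive primitive $\eta$ explicitly. The expected form of $\eta$ is an alternating sum of wedges of products $X_1^{\epsilon_1} \cdots X_n^{\epsilon_n}$ chosen so that its support near the diagonal lies in $\partial X_{i_1} \Cap \cdots \Cap \partial X_{i_k}$ for sufficiently many indices, guaranteeing boundedness by coarse transversality.
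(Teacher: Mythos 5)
Your telescoping identity
\[
A^\sigma_0 \wedge A^\sigma_1 \wedge \cdots \wedge A^\sigma_n = B^\sigma_0 \wedge B^\sigma_1 \wedge \cdots \wedge B^\sigma_n, \qquad B^\sigma_k := X_{\sigma_{k+1}}\cdots X_{\sigma_n}, \quad B^\sigma_n := \mathbf{1},
\]
is correct (each cross-term vanishes by repetition of a wedge factor), and it cleanly absorbs the factor $(-1)^n$ so that the proposition reduces to
\[
[\mathbf{1}\wedge X_1\wedge\cdots\wedge X_n] = \sum_{\sigma\in S_n}\sgn(\sigma)\cdot[\mathbf{1}\wedge B^\sigma_0\wedge\cdots\wedge B^\sigma_{n-1}].
\]
Your verification that the $n=2$ difference equals $\delta(X_1X_2^c\wedge X_1^cX_2)$ with a genuinely coarse primitive is also correct. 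This reorganization is a nice, different-looking entry point from the one the paper takes.

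However, the inductive step is the entire content of the proposition for $n\ge 3$, and you have not supplied it --- you explicitly flag it as ``the principal obstacle.'' The specific strategy you sketch, of grouping permutations by $\sigma_n = j$, factoring out $X_j$, and ``invoking the inductive hypothesis within the slice $X_j$'', has a real problem: the inductive hypothesis would be a statement in $HX^{n-1}(X_j)$ with $X_j$ as its own proper metric space and $X_i\cap X_j$ ($i\neq j$) as half spaces, and it is not clear how to transfer such an identity back into $HX^n(M)$, nor is the collection $\{X_i\cap X_j\}_{i\neq j}$ a coarsely transverse collection of half spaces \emph{in $M$} (one also needs control over $(X_i\cap X_j)^c$, which is not $X_i^c\cap X_j$). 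The paper avoids this by running its induction entirely inside $M$, but over a nested chain of big families $\cX_0\subset\cX_1\subset\cdots\subset\cX_n=\{M\}$: the key input is Lemma~\ref{LemmaFormulaThetak}, which computes $\frac{(-1)^k}{k}[\mathbf{1}\wedge X_1\wedge\cdots\wedge X_k] = \delta_{\mathrm{MV}}[\mathbf{1}\wedge X_1\wedge\cdots\wedge X_{k-1}]$ via the Mayer--Vietoris boundary, together with the explicit anti-symmetric section formula from Remark~\ref{RemarkAntiSymmetricVersion}, a careful bookkeeping argument with extended permutations $\sigma^+$, and an averaging over cyclic permutations $\tau^{(\ell)}$ to recover the full sum over $S_k$. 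Without supplying a workable inductive mechanism --- or the explicit primitives $\eta$ you allude to --- your proposal establishes only $n\le 2$, so there is a genuine gap.
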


%
%
%

\begin{proof}
We will prove this proposition by induction.
To this end, we more generally define for each permutation $\sigma \in S_k$ with $1 \leq k \leq n$,
\begin{equation}
\label{eqn:PartitionSubpermutation}
\begin{aligned}
A^\sigma_0 & = X_{\sigma_1}X_{\sigma_2}\cdots X_{\sigma_k}, \\
A^\sigma_i & = X_{\sigma_i}^cX_{\sigma_{i+1}}\cdots X_{\sigma_k}, 
\rlap{\qquad $(i=1,\ldots, k)$.}
\end{aligned}
\end{equation}
Note that although $A^\sigma_0,\ldots, A^\sigma_k$ is a partition of $M$, it may not be coarsely transverse unless $k=n$. Rather, we have
\[
\{A^\sigma_0\} \Cap \cdots \Cap \{A^\sigma_k\}
=
 \partial X_1\Cap\dots \Cap \partial X_k.
\]
By Lemma~\ref{LemmaPartitionCochain}, this implies that $A^\sigma_0\wedge \dots \wedge A^\sigma_k$ is contained in $CX^k_\alpha(\cX_k)$, 
with $\cX_k$ as in \eqref{DefinitionXk}.
Moreover, this element is closed, hence defines a class in $HX^k(\cX_k)$. 
We will now prove the more general statement that for each $k=1, \dots, n$, we have the formula
\begin{equation}\label{eqn:SumTheoremFormula}
[\mathbf{1}\wedge X_1\wedge \dots \wedge X_k]=(-1)^k\cdot \sum_{\sigma\in S_k}\sgn(\sigma)\cdot[A^\sigma_0\wedge \dots \wedge A^\sigma_k] \in HX^k(\cX_k).
\end{equation}
Because $\cX_n=\{M\}$, the $k=n$ case is precisely the statement of the Proposition.

For $k=1$, \eqref{eqn:SumTheoremFormula} is automatic, since $A^{(1)}_0=X_1$ and $A^{(1)}_1=X_1^c$, so
\[
\mathbf{1}\wedge X_1=(X_1+X_1^c)\wedge X_1=X_1^c\wedge X_1=-A^{(1)}_0\wedge A^{(1)}_1 \in CX_\alpha^1(\cX_1),
\]
already at the level of cochains. 
%

For general $k\geq 2$, direct comparison of the cochains in \eqref{eqn:SumTheoremFormula} quickly becomes much more difficult, and we shall proceed by induction. 
We appeal to Lemma~\ref{LemmaFormulaThetak}, which implies that
\begin{equation}
\begin{aligned}
\label{eqn:SumTheoremMV}
\frac{(-1)^{k}}{k} \cdot [\mathbf{1}\wedge X_1\wedge\dots\wedge X_k]
&=\delta_{\mathrm{MV}}\big([\mathbf{1}\wedge X_1 \wedge \dots \wedge X_{k-1}]\big)
\\
&= (-1)^{k-1} \cdot \sum_{\sigma \in S_{k-1}} \sgn(\sigma) \cdot \delta_{\mathrm{MV}}\big([A_0^\sigma \wedge \cdots \wedge A_{k-1}^\sigma]\big)
\end{aligned}
\end{equation}
in $HX^k(\cX_k)$, where in the second step, we substituted the induction hypothesis.
By Remark~\ref{RemarkAntiSymmetricVersion}, the class $\delta_{\mathrm{MV}}([A_0^\sigma \wedge \cdots \wedge A_{k-1}^\sigma])$ may be represented by the cocycle
\begin{align*}
&-\delta\left( \left(\frac{1}{k} \cdot\sum_{i=0}^{k-1} {\underbrace{\mathbf{1} \otimes \cdots \otimes \mathbf{1}}_i} \otimes X_k \otimes {\underbrace{\mathbf{1} \otimes \cdots \otimes \mathbf{1}}_{k-1-i}}\right)\cdot A_0^\sigma \wedge \cdots \wedge A_{k-1}^\sigma\right)
\\
&\hspace{4cm}= -\frac{1}{k} \sum_{i=0}^{k-1}\mathbf{1}\wedge A^\sigma_0\wedge\dots\wedge A^\sigma_{i-1}\wedge A^{\sigma}_i X_k\wedge A^\sigma_{i+1}\wedge\dots A^\sigma_{k-1}.
\end{align*}
For each $j\neq i$, we may replace each occurrence of $A_j^\sigma$ in this formula by $A^\sigma_j X_k + A^\sigma_j X_k^c$. 
However, it follows from Lemma~\ref{LemmaPartitionCochain}  that for each $j \neq i$, the cochain
\[
A^\sigma_0\wedge \dots \wedge A_j^\sigma X_k^c \wedge\dots\wedge A^\sigma_{i-1}\wedge A^{\sigma}_iX_k\wedge A^\sigma_{i+1}\wedge\dots A^\sigma_{k-1}
\]
is in fact contained in $CX^k_\alpha(\cX_k)$, hence the corresponding term is exact.
%
%
Therefore, at the level of cohomology, we may simply replace all the $A^\sigma_j$ with $A^\sigma_j X_k$, which yields
\begin{align*}
\delta_{\mathrm{MV}}\big([A_0^\sigma \wedge \cdots \wedge A_{k-1}^\sigma]\big)
= -\left[\mathbf{1}\wedge A^\sigma_0 X_k\wedge\dots\wedge A^\sigma_{k-1} X_k\right].
\end{align*}

In view of formula \eqref{eqn:PartitionSubpermutation}, we have $A_j^\sigma X_k = A_j^{\sigma^+}$, where $\sigma^+\in S_k$ is the extended permutation
\[
\sigma^+ = \begin{pmatrix}
1 & 2 & 3 & \cdots & k-1 & k
\\
\sigma_1 & \sigma_2 & \sigma_3 & \cdots & \sigma_{k-1} & k
\end{pmatrix}.
\]
Notice that $\sgn(\sigma^+) = \sgn(\sigma)$. 
Hence, with this notation, \eqref{eqn:SumTheoremMV} becomes
\begin{align}
\frac{(-1)^k}{k}\cdot\big[\mathbf{1}\wedge X_1\wedge \dots \wedge X_k\big] &=(-1)^{k-1}\cdot\!\!\!\sum_{\sigma\in S_{k-1}}\sgn(\sigma^+)\cdot\Big(-\left[\mathbf{1}\wedge A^{\sigma^+}_0\wedge\dots\wedge A^{\sigma^+}_{k-1}\right]\Big) \nonumber\\
&=(-1)^k\cdot\!\!\!\sum_{\sigma\in S_{k-1}}\sgn(\sigma^+)\cdot\left[\Big(\sum_{i=0}^k A^{\sigma^+}_i\Big)\wedge A^{\sigma^+}_0\wedge\dots\wedge A^{\sigma^+}_{k-1}\right]
\nonumber\\
&=\sum_{\sigma\in S_{k-1}}\sgn(\sigma^+)\cdot\left[A^{\sigma^+}_0\wedge\dots\wedge A^{\sigma^+}_{k-1}\wedge A^{\sigma^+}_k\right]
\nonumber\\
&=\sum_{\substack{\sigma\in S_k \\\sigma_k=k}}\sgn(\sigma)\cdot\big[A^\sigma_0\wedge\dots\wedge A^\sigma_k\big].\label{eqn:SumTheoremSubpermutation}
\end{align}

Now, for each $\ell=1,\ldots,k$, let $\tau^{(\ell)}\in S_k$ be the cyclic permutation 
\[
\tau^{(\ell)}
= \begin{pmatrix}
1 & \dots & k-\ell & k-\ell+1 & \dots & k
\\
	\ell+1 & \ldots & k &1 &\ldots &\ell
\end{pmatrix}.
\]
From \eqref{eqn:SumTheoremSubpermutation}, we further conclude that
\begin{align*}
\frac{(-1)^k}{k}\cdot[\mathbf{1}\wedge X_1\wedge \dots \wedge X_k]
&=\frac{(-1)^k}{k}\cdot \sgn(\tau^{(\ell)})\cdot\big[\mathbf{1}\wedge X_{\tau^{(\ell)}_1}\wedge\dots \wedge X_{\tau^{(\ell)}_k}\big]\\
&=\sum_{\substack{\sigma\in S_k \\ \sigma_k = k}}\sgn(\tau^{(\ell)}\circ\sigma )\cdot\left[A^{\tau^{(\ell)}\circ \sigma}_0\wedge\dots\wedge A^{\tau^{(\ell)}\circ \sigma}_k\right]\\
&=\sum_{\substack{\sigma\in S_k \\ \sigma_k=\ell}}\sgn(\sigma)\cdot\left[A^\sigma_0\wedge\dots\wedge A^\sigma_k\right]
\end{align*}
holds for each $\ell=1,\ldots,k$ as well.
 Summing over $\ell=1,\ldots,k$ gives the claimed formula \eqref{eqn:SumTheoremFormula}.
\end{proof}

\begin{proposition}[Equipartition]\label{PropEquipartition}
Let $X_1,\ldots, X_n$ be coarsely transverse half-spaces in $M$. 
For any $\sigma\in S_{n}$, we have
\[
[A^\sigma_0\wedge \dots \wedge A^\sigma_n]=\sgn(\sigma)\cdot[A_0\wedge \dots \wedge A_n]\in HX^n(M).
\]
\end{proposition}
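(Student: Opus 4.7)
The plan is to reduce the statement to the case of an adjacent transposition $\sigma=(i,i+1)$. This suffices because such transpositions generate $S_n$, the sign is multiplicative under composition, and applying an adjacent swap to the collection $X_{\sigma_1},\dots,X_{\sigma_n}$ yields precisely the collection that defines $A^{\sigma\tau}$; iterating along a reduced decomposition $\sigma=\tau_1\cdots\tau_k$ will then give $[A^\sigma_0\wedge\cdots\wedge A^\sigma_n]=(-1)^k[A_0\wedge\cdots\wedge A_n]=\sgn(\sigma)\cdot[A_0\wedge\cdots\wedge A_n]$, which is the desired identity.

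Fix an adjacent $\sigma=(i,i+1)$. A direct computation from the defining formulas \eqref{eqn:PermutedPartitions} gives $A^\sigma_j=A_j$ for $j\notin\{i,i+1\}$, together with
\[
A^\sigma_i=A_{i+1}-E,\qquad A^\sigma_{i+1}=A_i+E,\qquad E:=X_i^c X_{i+1}^c X_{i+2}\cdots X_n.
\]
I then expand $A^\sigma_0\wedge\cdots\wedge A^\sigma_n$ bilinearly in the two altered slots. The $E\wedge E$ term drops by antisymmetry, the remaining ``permuted'' term contributes $-A_0\wedge\cdots\wedge A_n$, and after substituting $A_i+A_{i+1}=\mathbf{1}-\sum_{j\neq i,i+1}A_j$ into the cross-terms (and noting that any reappearance of an $A_j$ kills the wedge), I expect to arrive at
\[
A^\sigma_0\wedge\cdots\wedge A^\sigma_n+A_0\wedge\cdots\wedge A_n=(-1)^i\,\delta\eta,\qquad \eta:=A_0\wedge\cdots\wedge A_{i-1}\wedge E\wedge A_{i+2}\wedge\cdots\wedge A_n,
\]
using the simple formula \eqref{SimpleASDifferential} for $\delta$ on decomposables containing $\mathbf{1}$.

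The main obstacle, and the only nontrivial step, is to verify that $\eta$ is genuinely a coarse $(n-1)$-cochain on all of $M$, i.e., that $\eta\in CX^{n-1}_\alpha(M)$; once this is in place, $\delta\eta$ represents zero in $HX^n(M)$ and the proposition follows. To check it, I fix $R>0$ and any $(x_0,\dots,x_{n-1})\in\supp(\eta)\cap\Delta_R$, and select a coordinate $x_a$ lying in $A_0\subseteq X_1\cap\cdots\cap X_n$. The support condition provides further coordinates lying in each $A_k$ for $k\in\{1,\dots,i-1,i+2,\dots,n\}$ and one more in $E$; since $A_k\subseteq X_k^c$ for $k\geq1$ and $E\subseteq X_i^c\cap X_{i+1}^c$, proximity within $\Delta_R$ forces $x_a\in(X_k^c)_R$ for every $k=1,\dots,n$. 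Combined with $x_a\in X_k$, this shows $x_a\in\bigcap_{k=1}^n\bigl(X_k\cap(X_k^c)_R\bigr)$, a bounded set by the coarse transversality of $X_1,\dots,X_n$, and all remaining coordinates lie in the $R$-ball around $x_a$. This boundedness of the support closes the argument.
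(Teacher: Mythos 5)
Your proof is correct and reaches the conclusion by a genuinely different route from the paper's, at the level of the adjacent-transposition step. The paper replaces the $i$-th wedge slot of each term by $\mathbf{1}$ (using $\sum_j A_j=\mathbf{1}$), combines the two $(i-1)$-th factors into a single expression, explicitly \emph{adds} an exact term of the form $A_0\wedge\dots\wedge A_{i-2}\wedge(X_{i-1}^cX_i^cX_{i+1}\cdots X_n)\wedge\mathbf{1}\wedge\cdots$, then re-expresses the prefix $A_0\wedge\cdots\wedge A_{i-2}$ via partial sums $X_j\cdots X_n$ and finishes with a telescoping argument. You instead note the exact relations $A^\sigma_i=A_{i+1}-E$, $A^\sigma_{i+1}=A_i+E$ with $E=X_i^cX_{i+1}^cX_{i+2}\cdots X_n$, expand bilinearly, absorb the cross-terms into a single $(-1)^i\delta\eta$ via $A_i+A_{i+1}=\mathbf{1}-\sum_{j\neq i,i+1}A_j$, and check $\eta\in CX^{n-1}_\alpha(M)$ directly. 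Both ultimately show that $A_0\wedge\cdots\wedge A_n+A^{\tau}_0\wedge\cdots\wedge A^{\tau}_n$ is a coarse coboundary, but your decomposition produces the primitive $\eta$ directly and avoids inserting and later cancelling an exact term; it is shorter and, I think, more transparent. The reduction to adjacent transpositions is also handled in the same spirit as the paper, by observing that the base computation applies verbatim to the reordered collection $X_{\sigma_1},\dots,X_{\sigma_n}$.

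One small quantitative slip: in the support argument, a point $(x_0,\dots,x_{n-1})\in\Delta_R$ has pairwise coordinate distances at most $2R$, so you should conclude $x_a\in\bigcap_{k=1}^n\bigl(X_k\cap(X_k^c)_{2R}\bigr)$ and that the remaining coordinates lie in the $2R$-ball around $x_a$; the factor of $2$ does not affect the boundedness conclusion, but the thickening radius should be $2R$, not $R$.
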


\begin{proof}
Fix any $2 \leq i \leq n$, and consider the adjacent transposition $(i-1,i)$. 
First, note that $A^{(i-1,i)}_0 \wedge \dots \wedge A^{(i-1,i)}_n$ differs from $A_0 \wedge \dots \wedge A_n$ only in the $(i-1)$-th and $i$-th wedge factors. 
Due to anti-symmetry and $\sum_{j=0}^n A_j=\mathbf{1}$, we can replace their $i$-th wedge factors by $\mathbf{1}$, so that
\begin{align*}
& A_0 \wedge \dots \wedge A_n +  A^{(i-1,i)}_0 \wedge \dots \wedge A^{(i-1,i)}_n \\
&= A_0 \wedge \cdots \wedge A_{i-1} \wedge \mathbf{1} \wedge A_{i+1} \wedge \cdots \wedge A_n + A_0 \wedge \cdots \wedge A_{i-1}^{(i-1, i)} \wedge \mathbf{1} \wedge A_{i+1} \wedge \cdots \wedge A_n
\\
&= A_0 \wedge \dots \wedge A_{i-2} \wedge  \left( ({X_{i-1}^cX_{i}} +X_{i}^cX_{i-1})X_{i+1}\cdots X_n\right)\wedge \mathbf{1} \wedge A_{i+1} \wedge\dots \wedge A_n.
\end{align*}
Next, we add the term
\begin{align*}
&A_0 \wedge \dots \wedge A_{i-2} \wedge   (X_{i-1}^c X_i^c X_{i+1}\cdots X_n)\wedge \mathbf{1} \wedge A_{i+1} \wedge\dots \wedge A_n,
\end{align*}
which is exact because the sets
\begin{align*}
& X_1X_2\cdots X_n, \qquad X_1^cX_2\cdots X_n, \qquad\dots\qquad, X_{i-2}^cX_{i-1}\cdots X_n, \\
&~~~~X_{i-1}^c X_i^c X_{i+1}\dots X_n, \qquad X_{i+1}^cX_{i+2}\cdots X_n, \qquad \dots \qquad X_n^c,
\end{align*}
are coarsely transverse.
So in cohomology, 
\begin{align*}
& \big[A_0 \wedge \dots \wedge A_n\big] +  \big[A^{(i-1,i)}_0 \wedge \dots \wedge A^{(i-1,i)}_n\big] \\
&= \big[A_0 \wedge \dots  \wedge A_{i-2} \wedge\left(( {X_{i-1}^cX_{i}} +X_{i}^cX_{i-1} +X_{i-1}^cX_{i}^c) X_{i+1}\cdots X_n\right)\wedge \mathbf{1} \wedge A_{i+1}\wedge\dots \wedge A_n\big]\\
&=\big[(X_1X_2\cdots X_n) \wedge (X_2\cdots X_n) \wedge \dots \wedge (X_{i-1}\cdots X_n)\wedge\\
& \qquad\wedge \left( ({X_{i-1}^cX_{i}} +X_{i}^cX_{i-1}  +X_{i-1}^cX_{i}^c) X_{i+1}\cdots X_n\right)\wedge \mathbf{1} \wedge (X_{i+1}^cX_{i+2}\cdots X_n)\wedge\dots \wedge X_n^c\big],
\end{align*}
where in the second equality, we used anti-symmetry to rewrite
\begin{align*}
A_0 \wedge A_1 \wedge \dots \wedge A_{i-2}
&= A_0 \wedge (A_0 + A_1) \wedge (A_0 + A_1 + A_2) \wedge \dots \wedge (A_0 + \dots + A_{i-2})	
\end{align*}
and invoked the identity $A_0 + \dots + A_j = X_{j+1} \cdots X_n$ 
(which holds for any $j$).
To proceed, we observe that
\begin{align*}
&(X_{i-1}\cdots X_n)\wedge \big(({X_{i-1}^cX_{i}} +X_{i}^cX_{i-1}  +X_{i-1}^cX_{i}^c) X_{i+1}\cdots X_n\big)\\
&=(X_{i-1}\cdots X_n)\wedge \big((\underbrace{{X_{i-1}^cX_{i}} +X_{i}^cX_{i-1}  +X_{i-1}^cX_{i}^c + X_{i-1}X_i}_{=\mathbf{1}}) X_{i+1}\cdots X_n\big) \\
&=(X_{i-1}\cdots X_n) \wedge (X_{i+1}\cdots X_n),
\end{align*}
so we have the simplification
\begin{align*}
& \big[A_0 \wedge \dots \wedge A_n\big] +  \big[A^{(i-1,i)}_0 \wedge \dots \wedge A^{(i-1,i)}_n\big] \\
&=\big[(X_1X_2\cdots X_n) \wedge (X_2\cdots X_n) \wedge \dots \wedge (X_{i-1}\cdots X_n)\wedge\\
& \qquad\qquad \wedge\underbracket{(
 X_{i+1}\cdots X_n )\wedge \mathbf{1} \wedge (X_{i+1}^cX_{i+2}\cdots X_n)\wedge\dots \wedge X_n^c}\big].
\end{align*}
The bracketed factor now vanishes by another telescoping argument,
\begin{align*}
&(X_{i+1}\cdots X_n)\wedge \mathbf{1}\wedge (X_{i+1}^cX_{i+2}\cdots X_n)\wedge \dots \wedge (X_{n-1}^cX_n) \wedge X_n^c\\
&\quad =(X_{i+1}\cdots X_n)\wedge\mathbf{1}\wedge (X_{i+2}\cdots X_n)\wedge (X_{i+2}^c X_{i+3} \cdots X_n)\wedge \dots \wedge X_{n-1}^cX_n\wedge X_n^c\\
&\quad =(X_{i+1}\cdots X_n)\wedge\mathbf{1}\wedge (X_{i+2}\cdots X_n)\wedge \cdots \wedge X_n \wedge \mathbf{1} =0.
\end{align*}
We therefore obtain that
\[
[A_0 \wedge \dots \wedge A_n]+[A^{(i-1,i)}_0 \wedge \dots \wedge A^{(i-1,i)}_n] = 0.
\]
The same argument shows that starting from any permutation $\sigma\in S_n$, and applying any adjacent transposition $(i-1,i)$,
\[
[A^{\sigma\circ (i-1,i)}_0 \wedge \dots \wedge A^{\sigma\circ (i-1,i)}_n] = -[A^{\sigma}_0 \wedge \dots \wedge A^{\sigma}_n],\qquad i=2,\ldots,n.
\]
Since any permutation can be obtained as a sequence of adjacent transpositions, the Proposition follows.
\end{proof}

\begin{corollary}\label{CorEquipartition}
Let $X_1,\ldots,X_n$ be coarsely transverse half-spaces in $M$, and $A_0,\ldots,A_n$ be the associated coarsely transverse partition as given in Lemma~\ref{LemPartitionFromHalfSpaces}. Then
\[
[\mathbf{1}\wedge X_1\wedge\dots\wedge X_n]=(-1)^n n!\cdot[A_0\wedge \dots\wedge A_n]\in HX^n(M).
\]
\end{corollary}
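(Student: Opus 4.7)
The proof plan is essentially a one-line combination of the two preceding propositions, and the main "difficulty" is bookkeeping of signs. I would simply chain the Sum Rule (Proposition \ref{PropSumOfPartitions}) with the Equipartition Principle (Proposition \ref{PropEquipartition}).

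First I would invoke Proposition \ref{PropSumOfPartitions} to rewrite the half-space cohomology class as a signed sum of permuted partition classes:
\[
[\mathbf{1}\wedge X_1\wedge \dots \wedge X_n]=(-1)^n\cdot \sum_{\sigma\in S_n}\sgn(\sigma)\cdot[A^\sigma_0\wedge \dots \wedge A^\sigma_n].
\]
Then I would substitute the Equipartition identity from Proposition \ref{PropEquipartition}, namely $[A^\sigma_0\wedge \dots \wedge A^\sigma_n]=\sgn(\sigma)\cdot[A_0\wedge \dots \wedge A_n]$, into each summand. This produces a factor $\sgn(\sigma)^2 = 1$ for every $\sigma \in S_n$, so the sum collapses to $|S_n| = n!$ copies of the same class:
\[
[\mathbf{1}\wedge X_1\wedge \dots \wedge X_n]=(-1)^n\!\!\sum_{\sigma\in S_n}\!\!\sgn(\sigma)^2\cdot[A_0\wedge \dots \wedge A_n]=(-1)^n n!\cdot[A_0\wedge \dots\wedge A_n],
\]
which is the claimed equality in $HX^n(M)$.

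There really is no hard step here once Propositions \ref{PropSumOfPartitions} and \ref{PropEquipartition} are in hand; the only thing worth double-checking is that the factor $(-1)^n$ from the Sum Rule survives unchanged (the Equipartition substitution only contributes $\sgn(\sigma)$ per term, and is independent of $n$). Since $\sgn(\sigma)^2=1$ holds universally and $|S_n|=n!$, the combinatorics is unambiguous and the corollary follows immediately.
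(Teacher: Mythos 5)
Your proposal is correct and matches the paper's own proof exactly: the authors likewise cite the sum rule (Proposition \ref{PropSumOfPartitions}) and the equipartition principle (Proposition \ref{PropEquipartition}) and note the corollary follows immediately, with the $\sgn(\sigma)^2=1$ collapse doing all the work.
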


\begin{proof}
This follows immediately from the sum rule and equipartition result, Propositions \ref{PropSumOfPartitions} and \ref{PropEquipartition} respectively.
\end{proof}

\subsection{Integrality result for Kubo pairings}
\label{SectionIntegralityKubo}

Let $M$ be a proper metric space and choose an $M$-module so that the algebra $\sB(M)$ of locally trace-class, finite propagation operators is defined.
We then have the following observation.

\vspace{-0.1cm}

\begin{lemma}
\label{LemmaPartitionInTermsOfChernCharacters}
Let $A_0, \dots, A_n$ be a coarsely transverse partition of $M$ and let $X_1, \dots, X_n$ be a coarsely transverse collection of half-spaces. 
Let $P=P^2\in M_k(\sB(M)^+)$ and $U\in \GL_k(\sB(M))^+\subseteq \GL_k(\sB(M)^+)$.
Then we have
\begin{align*}
[P; A_0, \dots, A_n] &= (-1)^m\frac{m!}{(2m)!}\cdot \langle \widetilde{\chi}_*\ch(P), A_0 \wedge \cdots \wedge A_n\rangle
& & (n=2m~\text{even})
\\
[U; A_0, \dots, A_n] 
&= \frac{1}{m!}\cdot \langle \chi_*\ch(U), A_0 \wedge \cdots \wedge A_n\rangle
& & (n=2m+1~\text{odd})
\\
[P; X_1, \dots, X_n] &= (-1)^m\frac{m!}{(2m)!}\cdot\langle \widetilde{\chi}_*\ch(P), \mathbf{1} \wedge X_1 \wedge \cdots \wedge X_n\rangle
& & (n=2m~\text{even})
\\
[U; X_1, \dots, X_n] &= \frac{1}{m!}\cdot\langle \chi_*\ch(U), \mathbf{1} \wedge X_1 \wedge \cdots \wedge X_n\rangle
& & (n= 2m+1~\text{odd})
\end{align*}
Here 
 $\chi_*$ is the character map \eqref{CharacterMap}, and the right hand side is the pairing \eqref{CoarsePairing} with the coarse cohomology class determined by the partition, respectively collection of half-spaces (see Section~\ref{SectionHalfSpaceClasses} \& \ref{SectionPartitionClasses}).
\end{lemma}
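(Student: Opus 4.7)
The plan is to prove all four identities by the same template: substitute the explicit cocycle representatives \eqref{Ch0Formula} and \eqref{Ch1Formula} for the Chern character, apply the coarse character map \eqref{DefCharacterMap}, and recognize the result as one of the alternative formulas for the Kubo/Kitaev pairings derived in Section~\ref{sec:WellDefinedPairings}. No new analytic input is required: well-definedness of the traces was already established there, and the cohomology classes on the right-hand side were shown to be represented by the displayed cocycles (a partition cocycle from Lemma~\ref{LemmaPartitionCochain} and the half-space cocycle from Lemma~\ref{LemmaFormulaThetak}).

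For the idempotent/partition case $n=2m$, I would start from $\ch_0^{(m)}(P) = (-1)^m \frac{(2m)!}{m!}\cdot [\tr(P\otimes\cdots\otimes P)]$ where the matrix trace produces an element of $C^\lambda_n(\sB(M)^+)$. Applying $\widetilde{\chi}$ and then pairing against the decomposable cocycle $A_0\wedge\cdots\wedge A_n = \sum_\sigma \sgn(\sigma) A_{\sigma_0}\otimes\cdots\otimes A_{\sigma_n}$ via \eqref{CoarsePairing} produces, by \eqref{DefCharacterMap}, the sum $(-1)^m\frac{(2m)!}{m!}\sum_\sigma \sgn(\sigma)\Tr(A_{\sigma_0} Q\cdots A_{\sigma_n} Q)$ where $Q$ is the non-scalar part of $P$ (the scalar part being killed by the trivial extension $\widetilde{\chi}$). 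By the alternative expression \eqref{AlternativeFormulaKitaevPairing}, this equals $(-1)^m\frac{(2m)!}{m!}\cdot [P;A_0,\ldots,A_n]$, which rearranges to the claimed identity. The half-space version $[P;X_1,\ldots,X_n]$ is handled identically after replacing the partition cocycle by $\mathbf{1}\wedge X_1\wedge\cdots\wedge X_n$ and invoking \eqref{KuboPairingCommutatorProduct2} in place of \eqref{AlternativeFormulaKitaevPairing}.

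For the invertible/odd case $n=2m+1$, I would substitute $\ch_1^{(m)}(U) = m!\cdot [\tr((U-1)\otimes(U^{-1}-1)\otimes\cdots)]$; here $U-1$ and $U^{-1}-1$ lie in $M_k(\sB(M))$, so $\chi$ applies directly and the trivial extension is unnecessary. Pairing with $A_0\wedge\cdots\wedge A_n$ yields $m!\sum_\sigma \sgn(\sigma)\Tr(A_{\sigma_0}(U-1)A_{\sigma_1}(U^{-1}-1)\cdots A_{\sigma_n}(U^{-1}-1))$, which matches formula \eqref{KitaevPairingAlt} for the Kitaev pairing, giving $m!\cdot[U;A_0,\ldots,A_n]$. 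The half-space version follows the same way, substituting the cocycle $\mathbf{1}\wedge X_1\wedge\cdots\wedge X_n$ and using \eqref{AlternativeFormulaKuboOdd2} instead.

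There is no substantive obstacle: the only things to verify carefully are (i) that the trace-class issues are already covered by the well-definedness results in Section~\ref{sec:WellDefinedPairings} and Lemma~\ref{LemmaLocalizationIdealProperty}, so that the chain-level formulas for $\chi$ may be paired termwise against the anti-symmetrized cochains, and (ii) that the combinatorial prefactors $(-1)^m(2m)!/m!$ in \eqref{Ch0Formula} and $m!$ in \eqref{Ch1Formula} combine correctly with those produced by expanding the wedge. These are purely bookkeeping matters; the only mildly subtle point is tracking how the trivial extension $\widetilde{\chi}$ effectively replaces each occurrence of $P$ with its non-scalar part $Q$ before the trace is taken, justifying the reduction to \eqref{AlternativeFormulaKitaevPairing} and \eqref{KuboPairingCommutatorProduct2}.
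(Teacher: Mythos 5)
Your proposal is correct and is essentially the paper's own proof, only spelled out in more detail: the paper's argument is literally to "compare" the cocycle representatives \eqref{Ch0Formula}, \eqref{Ch1Formula} with the alternative pairing formulas \eqref{AlternativeFormulaKitaevPairing}, \eqref{KuboPairingCommutatorProduct2}, \eqref{KitaevPairingAlt}, \eqref{AlternativeFormulaKuboOdd2}, and your write-up makes explicit what that comparison entails (applying $\widetilde{\chi}$, pairing termwise against the anti-symmetrized cochain, and tracking the prefactors). The observation that $\widetilde{\chi}$ effectively replaces $P$ by its non-scalar part $Q$ is the same mechanism the paper uses implicitly by routing through the formula \eqref{AlternativeFormulaKitaevPairing}.
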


\begin{proof}
In the case of an idempotent $P \in M_k(\sB(M)^+)$, compare the formula \eqref{Ch0Formula} for the Chern character with the alternative formulas \eqref{AlternativeFormulaKitaevPairing} respectively \eqref{KuboPairingCommutatorProduct2}.
In the case of an invertible element $U \in \GL_k(\sB(M))^+$, compare \eqref{Ch1Formula} with the alternative formula \eqref{KitaevPairingAlt}, respectively \eqref{AlternativeFormulaKuboOdd2}.
\end{proof}

We now prove the quantization result, Thm.~\ref{MainQuantizationThm}, for the Kubo pairings.
To this end, let $X_1, \dots, X_n \subseteq M$ be a coarsely transverse collection of half-spaces.
Let $\theta_n$ be the corresponding coarse cocycle given by \eqref{ThetaCocycle}, which agrees with $\mathbf{1} \wedge X_1 \wedge \cdots \wedge X_n$ up to a sign.
Consider the composition
\begin{equation}
\label{ImageNeeded}
	K^{\mathrm{alg}}_{n-2m}(\sB(M)^+)
	\xrightarrow{~~\ch_{n-2m}^{(m)}~}
	HC_n(\sB(M)^+)
	\xrightarrow{~~\tilde{\chi}_*~~}
	HX_n(M)
\xrightarrow{~~\substack{\text{pairing} \\~\text{with}~\theta_n}~~}
	\C,
\end{equation}
where $m$ is chosen such that $n - 2m \in \{0, 1\}$.
In light of Lemma~\ref{LemmaPartitionInTermsOfChernCharacters}, this composition agrees with the Kubo pairing up to a dimensional factor.
Thus, it remains to  show that the image of the composition
is the subgroup 
\begin{equation}
\label{TargetSubgroup}
\frac{1}{(2 \pi i)^m} \cdot \Z \subset \C, 
\qquad
m=
\begin{cases}
\frac{n}{2} & n~\text{even},	
\\
\frac{n-1}{2} & n~\text{odd}.
\end{cases}
\end{equation}
The determination of the groups $K^{\mathrm{alg}}_\bullet(\sB(M)^+)$ or the coarse (co)homology groups of $M$ would be a formidable task. Fortunately, for the purposes of showing that the composition \eqref{ImageNeeded} takes values in \eqref{TargetSubgroup}, a dimensional reduction argument to a manageable case suffices, as detailed in the proof below.

\begin{proof}[Proof of \eqref{TargetSubgroup}]
We drop the indices on the relevant Chern characters for better readability.
For every $m, n \in \N_0$, we may consider the following diagram:
\[
	\begin{tikzcd}[column sep=1cm]
	K^{\mathrm{alg}}_{n-2m}(\sB(M)^+)
	\ar[d]
	\ar[dr, "\ch", bend left=15]
	& & &[0.8cm]
	\\
	KH_{n-2m}(\sB(M)^+)
	\ar[d, "\partial_{\mathrm{MV}}"]
	\ar[r, "\ch"]
	&
	HC_n(\sB(M)^+)
	\ar[r, "\widetilde{\chi}_*"]
	&
	HX_n(M)
	\ar[r, "\substack{\text{pairing} \\~\text{with}~\theta_n}"]
	\ar[d, "\partial_{\mathrm{MV}}"]
	&
	\C
	\ar[d, equal]
	\\
	KH_{n-2m-1}(\sB(\cX_{n-1}))
	\ar[r, "\ch"]
	\ar[d, "\partial_{\mathrm{MV}}"]
	&
	HC_{n-1}(\sB(\cX_{n-1}))
	\ar[r, "\chi_*"]
	&
	HX_{n-1}(\cX_{n-1})
	\ar[r, "\substack{\text{pairing} \\~\text{with}~\theta_{n-1}}"]
	\ar[d, "\partial_{\mathrm{MV}}"]
	&
	\C	
	\ar[d, equal]
	\\
	KH_{n-2m-2}(\sB(\cX_{n-2}))
	\ar[r, "\ch"]
	\ar[d, "\partial_{\mathrm{MV}}"]
	&
	HC_{n-2}(\sB(\cX_{n-2}))
	\ar[r, "\chi_*"]
	&
	HX_{n-2}(\cX_{n-2})
	\ar[r, "\substack{\text{pairing} \\~\text{with}~\theta_{n-2}}"]
	\ar[d, "\partial_{\mathrm{MV}}"]
	&
	\C	
	\ar[d, equal]
	\\
	\vdots
	\ar[d, "\partial_{\mathrm{MV}}"]
	& &
	\vdots 
	\ar[d, "\partial_{\mathrm{MV}}"]
	&
	\vdots
	\ar[d, equal]
	\\
	KH_{-2m}(\sB(\cX_0))
	\ar[r, "\ch"]
	&
	HC_0(\sB(\cX_0))
	\ar[r, "\chi_*"]
	&
	HX_0(\cX_0)
	\ar[r, "\substack{\text{pairing} \\~\text{with}~\theta_{0}}"]
	&
	\C
\end{tikzcd}
\]
Here the rectangles on the left hand side commute by Lemma~\ref{LemmaChernMVboundary}, while the right squares commute by \eqref{CompatibilityThetaKboundary} and \eqref{DualityMVdifferentials}. 
We mention that $\partial_{\mathrm{MV}}$ admits an obvious modification which starts at the unitized algebra $\sB(M)^+$ instead of $\sB(M)$ --- for example, the Mayer-Vietoris short exact sequence \eqref{eqn:KHSES} can have $\sB(\cX)^+$ in place of $\sB(\cX)$.
In the odd case, we don't need the unitization.

We aim to understand the image of the bottom horizontal composition in the above diagram.
To begin with, we note that  $\cX_0 = \cB$, the big family of bounded sets, so by Lemma~\ref{LemmaTraceClass}, we have $\sB(\cX_0)\subseteq \sL_1(\cH)$, the algebra of trace-class operators on the $M$-module $\cH$.
Moreover, since $\theta_0 = \mathbf{1}$ is constant on $M$, pairing with it sends a measure to its integral and the precomposition with the character map takes an element in $HC_0(\sB(\cX_0))$ to its trace. 
In other words, we have a commutative diagram
\[
\begin{tikzcd}[column sep=1.5cm]
	KH_{-2m}(\sB(\cX_0))
	\ar[r, "\ch"]
	\ar[d]
	&
	HC_0(\sB(\cX_0))
	\ar[r, "\chi_*"]
	\ar[d]
	&
	HX_0(\cX_0)
	\ar[r, "\substack{\text{pairing} \\~\text{with}~\theta_{0}}"]
	&
	\C
	\ar[d, equal]
	\\
	KH_{-2m}(\sL_1)
	\ar[r, "\ch"]
	\ar[d]
	&
	HC_0(\sL_1)
	\ar[rr, "\Tr"]
	&
	&
	\C
	\ar[d, equal]
	\\
	K^{\mathrm{top}}_{-2m}(\sL_1) 
	\ar[r, "\ch"]
	&
	HC_0^{\mathrm{top}}(\sL_1)
	\ar[rr, "\Tr"]
	\ar[u, equal]
	&
	&
	\C,
\end{tikzcd}
\]
where to get to the last line, we composed with the comparison map \eqref{FactorizationComparisonMap}. Here, we note that by \eqref{ZerothCyclicHomology}, we have $HC_0(\sL_1)=\sL_1/[\sL_1,\sL_1]$, so the trace map is well-defined on $HC_0(\sL_1)$ by cyclicity of trace.

%
To determine the image of the bottom map in the previous diagram, we use the Bott periodicity isomorphism $\beta$, which, under the Chern character, corresponds to the $S$-operator up to a factor of $2 \pi i$, see \eqref{CompatibilityBottS}.
This yields the commutative diagram
\[
\begin{tikzcd}
	K^{\mathrm{top}}_{-2m}(\sL_1) 
	\ar[r, "\ch"]
	\ar[d, "\beta^m"']
	&
	HC_0^{\mathrm{top}}(\sL_1)
	\\
	K^{\mathrm{top}}_{0}(\sL_1)
	\ar[r, "\ch"]
	&
	HC^{\mathrm{top}}_{2m}(\sL_1).
	\ar[u, "(S/2\pi i)^m"']
\end{tikzcd}
\]
Pasting all the previous diagrams, we see that the image of the map \eqref{ImageNeeded} is contained in the image of the composition
\begin{equation*}
\begin{tikzcd}[column sep=1.5cm]
K^{\mathrm{top}}_0(\sL_1) 
\ar[r, "\ch"]
\ar[rr, dashed, bend right=18, "(2 \pi i)^{-m} \cdot\, \ch"', near end]
&
HC_{2m}^{\mathrm{top}}(\sL_1) 
\ar[r, "(S/2 \pi i)^m"]
&
HC_0^{\mathrm{top}}(\sL_1) = \sL_1 /[\sL_1, \sL_1]
\ar[r, "\Tr"]
&
\C.
\end{tikzcd}
\end{equation*}
Here the composition of the first two arrows equals the dashed arrow by compatibility of $S$ with $\ch$, see \eqref{CompatibilityS}.
The entire map is just given by sending 
\[
K_0(\sL_1) \ni x = [P] - [Q] \longmapsto (2 \pi i)^{-m} \cdot \big(\Tr(P - Q)\big) \in \C.
\]
Since $\sL_1$ is holomorphically closed in the algebra $\sK$ of compact operators, the inclusion $\sL_1 \hookrightarrow \sK$ induces an isomorphism
\begin{equation}
\label{KtheoryCompactOperators}
K^{\mathrm{top}}_n(\sL_1)\cong K^{\mathrm{top}}_n(\sK) \cong \begin{cases}
 \Z	& \text{if}~n~\text{is even},
 \\
 0	& \text{if}~n~\text{is odd},
 \end{cases}
\end{equation}
so any class $x \in K_0^{\mathrm{top}}(\sL_1)$ may be represented as $x = [P] - [Q]$ with $P, Q$ finite-rank projections in $\sL_1$, by the well-known description of $K_0^{\mathrm{top}}(\sK)$.
Therefore $\Tr(P - Q) = \Tr(P) - \Tr(Q) \in \Z$.
We conclude that the image of the map \eqref{ImageNeeded} is indeed contained in the subgroup \eqref{TargetSubgroup}, as desired.
\end{proof}

\subsection{Integrality result for Kitaev pairings}\label{SectionIntegralityKitaev}

In this section, we will prove Theorem \ref{MainQuantizationThm} for coarsely transverse partitions.
This will be done by reducing it to a ``standard situation''.

Denote by $\R^{n+1}_{\geq 0} \subset \R^{n+1}$ the set of vectors $x=(x_0, \dots, x_n)$ such that $x_a \geq 0$ for each $a=0,\ldots,n$. 
Define the \emph{corner}
\[
\mathrm{C}^n := \partial \R^{n+1}_{\geq 0}
\]
to be its boundary, whose elements have $x_a = 0$ for at least one $a$. 
Equip $\mathrm{C}^n$ with the metric that takes the maximum of the coordinate distances.
The \emph{standard} coarsely transverse partition of $\mathrm{C}^n$ is defined by
\begin{equation*}
A_i^{\mathrm{std}} = \big\{x = (x_0, \dots, x_n) \in \mathrm{C}^n \mid x_0, \dots, x_{i-1} >0,\,x_i=0\big\},\qquad i=0,\ldots,n.
\end{equation*}

\begin{lemma}\label{LemPartitionClassifyingMap}
Let $A_0,\ldots,A_n$ be a coarsely transverse partition of $M$. There exists a coarse map $f:M\to \mathrm{C}^n$ such that 
\begin{equation}
\label{PullbackStandard}
f^* A^{\mathrm{std}}_i=A_i, \rlap{\qquad $i=0,\ldots,n$.}
\end{equation}
\end{lemma}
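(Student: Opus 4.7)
The plan is to define $f$ explicitly as a modified distance map, then verify controlledness, properness, and the pullback identity directly. For each $i = 0, \ldots, n$, I would set
\[
f_i(x) := \begin{cases} 0 & \text{if } x \in A_i, \\ 1 + d(x, A_i) & \text{if } x \notin A_i, \end{cases}
\]
and take $f(x) := (f_0(x), \ldots, f_n(x))$. Since the $A_i$ partition $M$, each $x$ lies in exactly one of them, so exactly one coordinate $f_i(x)$ vanishes; in particular $f(x) \in \mathrm{C}^n$. The ``$+1$'' offset is the decisive feature: for $x \in A_i$, it guarantees $f_j(x) \geq 1 > 0$ for every $j \neq i$, which is precisely the condition characterizing $A_i^{\mathrm{std}}$. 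Hence $f^{-1}(A_i^{\mathrm{std}}) = A_i$ for each $i$, establishing \eqref{PullbackStandard}.

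It remains to check that $f$ is a coarse map. For control, if $d(x, y) \leq r$, then $|d(x, A_i) - d(y, A_i)| \leq r$ by the triangle inequality, and between any two points a single coordinate $f_i$ can undergo a jump of at most $1$ when exactly one of $x, y$ lies in $A_i$; hence
\[
d(f(x), f(y)) = \max_i |f_i(x) - f_i(y)| \leq r+1.
\]
For properness, if $f(x)$ has all coordinates bounded by $C$, then $d(x, A_i) \leq C$ for every $i$, so $x \in \bigcap_{i=0}^n (A_i)_C$, which is bounded by the coarse transversality of the partition. Thus $f^{-1}$ of any bounded subset of $\mathrm{C}^n$ is bounded. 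Borel measurability is automatic, since each $f_i$ is a sum and product of Borel functions.

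I do not foresee any significant obstacle. The only subtlety worth flagging is why the naive continuous choice $f_i(x) := d(x, A_i)$ does not work: it fails exactly at points $x \in A_i$ lying in the topological closure of some $A_j$ with $j \neq i$, where one would get $f_j(x) = 0$ as well and the strict identity $f^{-1}(A_i^{\mathrm{std}}) = A_i$ would break down. The discrete ``$+1$'' jump repairs this, and since coarse maps are required only to be Borel, the resulting discontinuity is harmless. A minor additional convention is needed when some $A_i$ is empty (e.g., setting $f_i \equiv 1$ there), but this is cosmetic.
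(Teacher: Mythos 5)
Your proof is correct, and it is a more careful version of the paper's. The paper takes the unmodified map $f = (d_{A_0}, \dots, d_{A_n})$ and asserts that the pullback identity \eqref{PullbackStandard} ``is easily checked,'' but the subtlety you flag is genuine: for a Borel partition the pieces may touch, and if $x \in A_i$ happens to lie in the topological closure of some $A_j$ with $j<i$, then $d(x,A_j)=0$, so $f(x)$ lands in $A_j^{\mathrm{std}}$ rather than $A_i^{\mathrm{std}}$ and $f^{-1}(A_i^{\mathrm{std}}) \neq A_i$. Your ``$+1$'' offset repairs this cleanly: for $x \in A_i$ the $i$-th coordinate of $f(x)$ vanishes while all others are at least $1$, so $f(x) \in A_i^{\mathrm{std}}$ holds for exactly the correct index, giving $f^{-1}(A_i^{\mathrm{std}}) = A_i$ on the nose. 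The discontinuity is immaterial because the paper only requires coarse maps to be Borel-measurable, your control bound $d(f(x),f(y)) \le r+1$ is correct, and properness follows from coarse transversality exactly as in the paper. In short, your construction is the rigorous one; the paper's distance-function map would need either your modification or an extra topological hypothesis (such as each $A_i$ being disjoint from the closure of every $A_j$ with $j<i$) for \eqref{PullbackStandard} to hold exactly.
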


\begin{proof}
Define
\begin{equation*}
  f = (d_{A_0}, \dots, d_{A_n}) : M \longrightarrow \mathrm{C}^n,
\end{equation*}
where $d_{A_i}$ denotes the distance-from-$A_i$ function.
One easily checks that \eqref{PullbackStandard} holds.
%
%
Now, the distance function from any subset is Lipschitz continuous with Lipschitz constant one. 
{It follows that $d(f(x), f(y)) \leq d(x, y)$, hence $f$ is controlled.}

It remains to check that $f$ is proper. To this end, let $B \subseteq \mathrm{C}^n$ be bounded. 
Then for some $R>0$, we have
\[
B\subseteq [0, R]^{n+1}\cap \mathrm{C}^n = \bigcap_{i=0}^{n} (A_i^{\mathrm{std}})_R.
\]
Since $f^*A_i^{\mathrm{std}} = A_i$, we get that
\begin{equation*}
  f^{-1}(B) \subseteq \bigcap_{i=0}^{n} (A_i)_R,
\end{equation*}
which is bounded because $A_0, \dots, A_n$ are coarsely transverse. Hence $f$ is proper.
\end{proof}

\begin{lemma}\label{LemHalfSpacesGeneratingPartition}
Let $n\geq 1$, and let $A_0,\ldots,A_n$ be a coarsely transverse partition of $M$. There exists a collection $X_1,\ldots, X_n$ of coarsely transverse half-spaces in $M$ such that
\[
[\mathbf{1}\wedge X_1\wedge\dots\wedge X_n]=(-1)^n n!\cdot [A_0\wedge\dots\wedge A_n] \in HX^n(M).
\]
\end{lemma}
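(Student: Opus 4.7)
The plan is to combine the classifying map of Lemma~\ref{LemPartitionClassifyingMap} with an explicit construction of coarsely transverse half spaces on $\mathrm{C}^n$, reducing the lemma via naturality to a purely topological identity in the coarse cohomology of $\mathrm{C}^n$.

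First, I would use Lemma~\ref{LemPartitionClassifyingMap} to obtain a coarse map $f\colon M\to \mathrm{C}^n$ satisfying $f^{-1}(A_i^{\mathrm{std}})=A_i$ for each $i$.  Because $f$ is proper, the preimage of any coarsely transverse collection of half spaces on $\mathrm{C}^n$ is again coarsely transverse on $M$; and by naturality of the coarse cohomology pairing, $f^{\ast}[A_0^{\mathrm{std}}\wedge\cdots\wedge A_n^{\mathrm{std}}]=[A_0\wedge\cdots\wedge A_n]$ in $HX^n(M)$.  Therefore it suffices to prove the lemma for $M=\mathrm{C}^n$ equipped with the standard partition $A_i^{\mathrm{std}}$; if $Y_1,\dots,Y_n$ realize the desired identity on $\mathrm{C}^n$, then $X_i:=f^{-1}(Y_i)$ will work on~$M$.

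Second, on $\mathrm{C}^n$ I would take the explicit half spaces
\[
Y_i \;:=\; \{\,x\in\mathrm{C}^n \mid x_i \geq x_0\,\},\qquad i=1,\dots,n.
\]
Coarse transversality is then a direct calculation: points in the big family $\partial Y_1\Cap\cdots\Cap\partial Y_n$ satisfy $|x_0-x_i|\leq 2R$ for all $i$ and some $R$, and since $x\in\mathrm{C}^n$ forces some coordinate to vanish, all coordinates of $x$ are then bounded by $O(R)$.  The associated partition $B_0,\dots,B_n$ from Lemma~\ref{LemPartitionFromHalfSpaces} is readily computed; one finds $B_0=A_0^{\mathrm{std}}$, while each $B_i$ with $i\geq 1$ is a ``reshuffling'' across the $A_j^{\mathrm{std}}$ with $j\leq i$ cut by the hyperplanes $\{x_k=x_0\}$.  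Applying Corollary~\ref{CorEquipartition} yields
\[
[\mathbf{1}\wedge Y_1\wedge\cdots\wedge Y_n] \;=\; (-1)^n n!\,[B_0\wedge\cdots\wedge B_n] \quad\text{in }HX^n(\mathrm{C}^n).
\]

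The third and main step is to verify the cohomology identity
\[
[B_0\wedge\cdots\wedge B_n] \;=\; [A_0^{\mathrm{std}}\wedge\cdots\wedge A_n^{\mathrm{std}}] \quad\text{in }HX^n(\mathrm{C}^n).
\]
Here is the hard part.  Both cocycles are supported on a thickening of the multidiagonal in $\mathrm{C}^n$, but although $B_0=A_0^{\mathrm{std}}$, the differences $B_i\triangle A_i^{\mathrm{std}}$ for $i\geq 1$ are genuinely unbounded.  At the level of arbitrary cochains one can write
$B_0\wedge\cdots\wedge B_n - A_0^{\mathrm{std}}\wedge\cdots\wedge A_n^{\mathrm{std}} = \delta\eta$ using natural telescoping identities built out of the indicator functions of the auxiliary sets $\{x_k\geq x_0\}$, but the resulting cochain $\eta$ is not a priori \emph{coarse}.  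The plan would be to fix this by symmetrizing the construction and using the standing assumption of coarse transversality of the $A_i^{\mathrm{std}}$ to show that the naively non-coarse pieces combine into a coarse cochain on $\mathrm{C}^n$; this is in the same spirit as the exactness trick in the proof of Proposition~\ref{PropEquipartition}, where one adds a cochain of the form $A_0\wedge\cdots\wedge A_{i-2}\wedge(X_{i-1}^cX_i^cX_{i+1}\cdots X_n)\wedge\mathbf{1}\wedge A_{i+1}\wedge\cdots\wedge A_n$ that is exact precisely because of coarse transversality.

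Putting the three steps together gives $[\mathbf{1}\wedge Y_1\wedge\cdots\wedge Y_n]=(-1)^n n!\,[A_0^{\mathrm{std}}\wedge\cdots\wedge A_n^{\mathrm{std}}]$ on $\mathrm{C}^n$, and pulling back under $f$ produces coarsely transverse half spaces $X_i:=f^{-1}(Y_i)$ on $M$ satisfying the required identity.  The main obstacle is step three: the naive coboundary $\eta$ one writes down fails the coarse support condition, and the essential content of the proof lies in modifying $\eta$ so that it becomes a coarse cochain.
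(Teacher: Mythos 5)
Your proposal correctly identifies the use of the classifying map $f\colon M\to\mathrm{C}^n$ and of Corollary~\ref{CorEquipartition}, but it has a genuine unresolved gap at the step you yourself flag as ``the main obstacle.'' After choosing the half spaces $Y_i=\{x_i\geq x_0\}$ on $\mathrm{C}^n$, the associated partition $B_0,\dots,B_n$ from Lemma~\ref{LemPartitionFromHalfSpaces} is \emph{not} the standard partition $A_i^{\mathrm{std}}$ (only $B_0=A_0^{\mathrm{std}}$ agrees, as you note), so you still need to establish $[B_0\wedge\cdots\wedge B_n]=[A_0^{\mathrm{std}}\wedge\cdots\wedge A_n^{\mathrm{std}}]$ in $HX^n(\mathrm{C}^n)$. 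You gesture at a symmetrization argument in the spirit of Proposition~\ref{PropEquipartition} but leave the coarseness of the putative primitive $\eta$ unverified; as you say, the naive $\eta$ does not satisfy the coarse support condition, and it is not at all clear that the fix goes through. Until that third step is carried out, the proof is incomplete.

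The paper avoids this difficulty entirely by a different choice of target. Instead of constructing half spaces directly on $\mathrm{C}^n$, the paper composes $f$ with a further coarse equivalence $g\colon \mathrm{C}^n\to\R^n$ (orthogonal projection onto the hyperplane $H^n$, then a ``straightening'' identification of $H^n$ with $\R^n$) chosen so that $g^*\hat{A}_i^{\mathrm{std}}=A_i^{\mathrm{std}}$, where $\hat{A}_i^{\mathrm{std}}$ is the partition of $\R^n$ cut out by the standard half spaces $X_i^{\mathrm{std}}=\{x_i\geq 0\}$ exactly via the formula of Lemma~\ref{LemPartitionFromHalfSpaces}. Setting $X_i:=(g\circ f)^*X_i^{\mathrm{std}}$ one then has $A_0=X_1\cdots X_n$ and $A_i=X_i^cX_{i+1}\cdots X_n$ \emph{on the nose}, so Corollary~\ref{CorEquipartition} applies directly, with no residual cohomology identity to prove. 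The essential idea you are missing is that one should classify to $\R^n$ (where partition and half spaces are tautologically compatible) rather than stopping at $\mathrm{C}^n$; this eliminates the hard third step of your plan completely.
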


\begin{proof}
Recall the partition of $\R^n$ obtained from the standard half-spaces $X^{\mathrm{std}}_1,\ldots,X^{\mathrm{std}}_n$ in $\R^n$, given in Eq.~\eqref{EuclideanStandardPartition} of Example~\ref{ExampleEuclideanHalfSpaces}. We write it here as
\begin{align*}
\hat{A}^{\mathrm{std}}_0&=X_1^{\mathrm{std}}\cdots X_n^{\mathrm{std}},\\
\hat{A}^{\mathrm{std}}_i&=(X_i^{\mathrm{std}})^c X_{i+1}^{\mathrm{std}}\cdots X_n^{\mathrm{std}},\qquad i=1,\ldots,n.
\end{align*}
It is related to the standard partition of $\mathrm{C}^n$ as follows. 
The orthogonal projection from $\R^{n+1}$ onto the hyperplane
\begin{equation*}
  H^n = \big\{ x = 	(x_0, \dots, x_n) \in \R^{n+1} \mid x_0 + \dots + x_n = 0 \big\}
\end{equation*}
restricts to a bijective coarse equivalence $h:\mathrm{C}^n\to H^n$.
Let $\{e_0,\ldots,e_n\}$ be the standard basis for $\R^{n+1}$, then $\{h(e_1),\ldots,h(e_n)\}$ is a basis for $H^n$, identifying it with $\R^n$. So $h(A_0^{\mathrm{std}}),\ldots,h(A_n^{\mathrm{std}})$ is a coarsely transverse partition of $\R^n$. By iteratively applying further coarse equivalences $\R^n\to \R^n$, we can ``straighten'' this partition to obtain $\hat{A}^{\mathrm{std}}_0,\ldots,\hat{A}^{\mathrm{std}}_n$. (See Fig.~\ref{fig:standard.partition} for the $n=2$ case.) Thus, there is a coarse map $g: \mathrm{C}^n\to\R^n$ such that
\[
g^*\hat{A}^{\mathrm{std}}_i=A^{\mathrm{std}}_i,\qquad i=0,\ldots,n.
\]

Now let $f:M\to \mathrm{C}^n$ be the coarse map of Lemma~\ref{LemPartitionClassifyingMap}, so $f^*A^{\mathrm{std}}_i=A_i$. Then $g\circ f:M\to \R^n$ is a coarse map such that $(g\circ f)^*\hat{A}^{\mathrm{std}}_i=A_i$ for $i=0,\ldots,n$. Now,
\[
X_i=(g\circ f)^*X_i^{\mathrm{std}},\qquad i=1,\ldots,n,
\]
is a  coarsely transverse collection of half-spaces in $M$. Furthermore, by construction,
\begin{align*}
A_0 &=(g\circ f)^*\hat{A}^{\mathrm{std}}_0 =(g\circ f)^*(X_1^{\mathrm{std}}\cdots X_n^{\mathrm{std}})=X_1\cdots X_n,\\
A_i &=(g\circ f)^*\hat{A}^{\mathrm{std}}_i =(g\circ f)^*\big((X_i^{\mathrm{std}})^c X_{i+1}^{\mathrm{std}}\cdots X_n^{\mathrm{std}}\big)=X_i^c X_{i+1}\cdots X_n,\qquad i=1,\ldots,n.
\end{align*}
Now we can apply Corollary \ref{CorEquipartition} to obtain the claim of the lemma.
\end{proof}

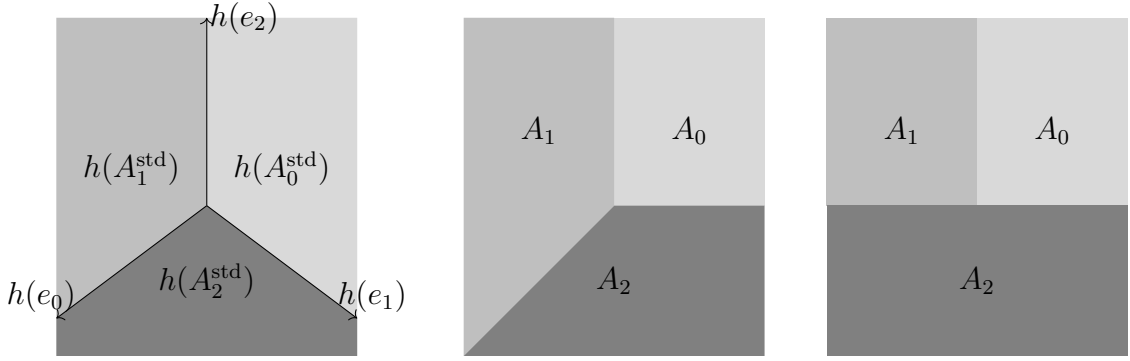
\begin{figure}
\begin{center}
\begin{tikzpicture}
\fill[color=gray!30] (0,0) -- (2,-1.5) -- (2,2.5) -- (0,2.5);
\fill[color=lightgray] (0,0) -- (-2,-1.5) -- (-2,2.5) -- (0,2.5);
\fill[color=gray] (0,0) -- (-2,-1.5) -- (-2,-2) -- (2,-2) -- (2,-1.5);
\draw[->] (0,0) -- (0,2.5);
\draw[->] (0,0) -- (2,-1.5);
\draw[->] (0,0) -- (-2,-1.5);
\node at (1,0.5) {$h(A_0^{\mathrm{std}})$};
\node at (-1,0.5) {$h(A_1^{\mathrm{std}})$};
\node at (0,-1) {$h(A_2^{\mathrm{std}})$};
\node at (-2.5,-1.6) {$h(e_0)$};
\node at (2.5,-1.6) {$h(e_1)$};
\node at (0,2.8) {$h(e_2)$};
\end{tikzpicture}
\hspace{-0.6em}
\begin{tikzpicture}
\fill[color=gray!30] (0,0) -- (2,0) -- (2,2.5) -- (0,2.5);
\fill[color=lightgray] (0,2.5) -- (-2,2.5) -- (-2,-2) -- (0,0);
\fill[color=gray] (2,0) -- (0,0) -- (-2,-2) -- (2,-2);
\node at (1,1) {$A_0$};
\node at (-1,1) {$A_1$};
\node at (0,-1) {$A_2$};
\end{tikzpicture}
~\hspace{1em}
\begin{tikzpicture}
\fill[color=gray!30] (0,0) -- (2,0) -- (2,2.5) -- (0,2.5);
\fill[color=lightgray] (0,2.5) -- (-2,2.5) -- (-2,0) -- (0,0);
\fill[color=gray] (2,0) -- (-2,0) -- (-2,-2) -- (2,-2);
\node at (1,1) {$A_0$};
\node at (-1,1) {$A_1$};
\node at (0,-1) {$A_2$};
\end{tikzpicture}
\end{center}
\caption{
The left figure shows a 2-partition of the hyperplane $H^2$ in $\R^3$ orthogonal to $(1,1,1)$. 
It is projected from the standard 2-partition of the boundary of the positive octant of $\R^3$. 
Let $\{e_0,e_1,e_2\}$ be the standard basis for $\R^3$, and $h$ be the orthogonal projection $\R^3\to H^2$. We may (coarsely) identify $H^2$ with $\R^2$ by mapping the basis $\{h(e_1),h(e_2)\}$ to the standard basis for $\R^2$, yielding a 2-partition of $\R^2$ shown in the middle figure. Applying a suitable coarse self-map $\R^2\to \R^2$, we obtain the 2-partition of $\R^2$ given in Example~\ref{ExampleEuclideanHalfSpaces}.}
\label{fig:standard.partition}
\end{figure}

The quantization of Kitaev pairings to the values stated in Theorem \ref{MainQuantizationThm} now follows immediately from Lemma~\ref{LemHalfSpacesGeneratingPartition} and the quantization of Kubo pairings proved in Section \ref{SectionIntegralityKubo}.

\subsection{A dual index theorem}
\label{SectionDualIndexTheorem}

Let $M$ be a proper metric space and let $\cH$ be an $M$-module.
Throughout, we assume that $\cH$ is ample (see Remark \ref{RemAmpleModule}).
In this case, the $C^*$-algebra obtained by taking the norm closure of $\sB(M) \subset \sL(\cH)$ is denoted by $C^*(M)$ and called the \emph{Roe algebra} of $M$.
Similarly, for a big family $\cY$ in $M$, the norm closure of $\sB(\cY)$ is denoted by $C^*(\cY)$.

For any big family $\cY$ in $M$, we obtain canonical comparison maps
\begin{equation}
\label{ComparisonBRoe}
K^{\mathrm{alg}}_n(\sB(\cY)) \longrightarrow K^{\mathrm{alg}}_n(C^*(\cY)) \longrightarrow KH_n(C^*(\cY)) \longrightarrow K^{\mathrm{top}}_n(C^*(\cY)),
\end{equation}
where the first map is induced by the inclusion $\sB(\cY) \hookrightarrow C^*(\cY)$ and the next maps are the comparison maps \eqref{FactorizationComparisonMap}.

Let $X_1, \dots, X_n$ be a coarsely transverse collection of half-spaces and let $\cX_k$, $0 \leq k \leq n$, be the big families defined in \eqref{DefinitionXk}.
The large diagram from Section~\ref{SectionIntegralityKubo} may then be extended to the commutative diagram
\[
\begin{tikzcd}
K^{\mathrm{alg}}_{n-2m}(\sB(M)^+)
\ar[d]
&
HC_n(\sB(M)^+)
\ar[r, "\widetilde{\chi}_*"]
&
HX_n(M)
\ar[r, "\substack{\text{pairing} \\~\text{with}~\theta_n}"]
&[0.3cm]
\C
\ar[dddddd, equal]
\\
KH_{n-2m}(\sB(M)^+)
\ar[d, "\partial_{\mathrm{MV}}"]
\ar[r]
\ar[ur, "\ch"]
&
K^{\mathrm{top}}_{n-2m}(C^*(M)^+)
\ar[d, "\partial_{\mathrm{MV}}"]
\ar[r, "\beta^m"]
&
K^{\mathrm{top}}_{n}(C^*(M)^+)
\ar[d, "\partial_{\mathrm{MV}}"]
\\
KH_{n-2m-1}(\sB(\cX_{n-1}))
\ar[d, "\partial_{\mathrm{MV}}"]
\ar[r]
&
K^{\mathrm{top}}_{n-2m-1}(C^*(\cX_{n-1})^+)
\ar[d, "\partial_{\mathrm{MV}}"]
\ar[r, "\beta^m"]
&
K^{\mathrm{top}}_{n-1}(C^*(\cX_{n-1})^+)
\ar[d, "\partial_{\mathrm{MV}}"]
\\
\vdots
\ar[d, "\partial_{\mathrm{MV}}"]
&
\vdots
\ar[d, "\partial_{\mathrm{MV}}"]
&
\vdots
\ar[d, "\partial_{\mathrm{MV}}"]
\\
KH_{-2m}(\sB(\cX_0))
\ar[d]
\ar[r]
&
K_{-2m}^{\mathrm{top}}(C^*(\cX_0))
\ar[d, equal]
\ar[r, "\beta^m"]
&
K_{0}^{\mathrm{top}}(C^*(\cX_0))
\ar[d, equal]
\\
KH_{-2m}(\sL_1)
\ar[d]
&
K^{\mathrm{top}}_{-2m}(\sK)
\ar[d, "\beta^m"]
\ar[r, "\beta^m"]
&
K^{\mathrm{top}}_{0}(\sK)
\ar[dl, equal]
\\
K^{\mathrm{top}}_{-2m}(\sL_1)
\ar[r, "\beta^m"]
\ar[ur, "\cong"]
&
K^{\mathrm{top}}_{0}(\sL_1)\cong K^{\mathrm{top}}_{0}(\sK)
\ar[rr, "\Tr/(2\pi i)^m"]
&
&
\C,
\end{tikzcd}
\]
where we used that Bott periodicity commutes with the boundary map in topological $K$-theory.
The vertical composition
\begin{equation}
\label{VerticalComposition}
K_{n}^{\mathrm{top}}(C^*(M)^+) \longrightarrow K_0^{\mathrm{top}}(\sK) \cong \Z
\end{equation}
in the above diagram may be described as a generalized index map, as we now describe. 
To this end, we first observe that the subsets $X_1, \dots, X_n$ determine a certain compactification $\overline{M}$ of $M$, as follows.
First we define bounded continuous functions $\chi_1, \dots, \chi_n$ on $M$ by
\[
\chi_i = \frac{\tilde{\chi}_i}{\sqrt{1 + \tilde{\chi}_1^2 + \dots + \tilde{\chi}_n^2}}, \qquad \text{with} \qquad \tilde{\chi}_i(x) = d(x, X_i^c) - d(x, X_i).
\] 
Let $\sA \subseteq C_b(M)$ be the commutative $C^*$-subalgebra generated by $C_0(M)$ and $\chi_1, \dots, \chi_n$.
By coarse transversality of $X_1, \dots, X_n$, the difference $\chi_1^2 + \dots \chi_n^2 - 1$ is contained in $C_0(M)$, hence $\sA$ is a unital $C^*$-algebra and hence there exists a compact Hausdorff space $\overline{M}$ containing $M$ as dense subspace such that $\sA \cong C(\overline{M})$.
Let $N := \overline{M} \setminus M$ be the boundary of this compactification.
There is a well-defined $*$-homomorphism
\[
 C(S^{n-1}) \longrightarrow C(\overline{M})/C_0(M) \cong C(\overline{M} \setminus M) = C(N)
\]
that sends the canonical coordinate function $x_i$ on $S^{n-1}$ to $\chi_i$.
Via Gel'fand duality, this yields a continuous map 
\begin{equation}
\label{CoronaMap}
\varphi = \varphi_{X_1, \dots, X_n} : N \to S^{n-1}.
\end{equation}
We refer to $N$ as the \emph{spherical corona of} $M$ provided by the collection of half-spaces.

By \cite{QiaoRoe}, there is a bilinear pairing 
\begin{equation}
\label{RoeAlgebraPairing}
  K_i^{\mathrm{top}}(C^*(M)^+) \times K^{i-1}(N) \longrightarrow \Z,
\end{equation}
where $K^{-j}(N) \cong K_j^{\mathrm{top}}(C(N))$ denotes the topological $K$-theory of the space $N$.
First, there is a $*$-homomorphism
\[
C^*(M)^+ \otimes C(N) \longrightarrow C^*(M)^+/\sK, \qquad T\otimes f\longmapsto [T\tilde{f}],
\]
where $\tilde{f} \in C(\overline{M})$ is an arbitrary extension of $f \in C(N)$ to all of $\overline{M}$.
The point here is that if $g \in C_0(M)$, then $Tg$ is compact (hence the map is independent of the choice of representative $\tilde{f}$) and, moreover, for any $\tilde{f} \in C(\overline{M})$ and $T \in C^*(M)$, the commutator $[T, \tilde{f}]$ is compact as well, so the map is a homomorphism (we use here that elements of $C(\overline{M})$ have vanishing variation at infinity, see \cite{QiaoRoe}).
The pairing \eqref{RoeAlgebraPairing} is now obtained as the composition
\begin{align*}
K_i^{\mathrm{top}}(C^*(M)^+) \times K^{i-1}(N) 
&\longrightarrow K_1^{\mathrm{top}}(C^*(M)^+ \otimes C(N)) \\
&\longrightarrow K_1^{\mathrm{top}}(C^*(M)^+/\sK) \\
&\stackrel{\partial}{\longrightarrow} K_0^{\mathrm{top}}(\sK)\cong \Z,
\end{align*}
where the first map is the external product, the second map is induced by the $*$-homomorphism just described and the third map is the boundary map in topological $K$-theory.
By the general description of the external product (e.g.~\cite[Prop.~4.8.3]{HigsonRoe}), the pairing may be described as
\[
\langle [P], [u]\rangle = \ind(P\tilde{u} + 1-P), \qquad \text{or} \qquad \langle [U], [p]\rangle = \ind(U \tilde{p} + 1 - \tilde{p})
\]
in the cases $i=0$ and $i=1$ (mod $2$), respectively.

By joint work of Bunke and the first author \cite[\S4.7]{BunkeLudewig}, the relation of the pairing \eqref{RoeAlgebraPairing} to the iterated Mayer-Vietoris map \eqref{VerticalComposition} is now described as follows.

\begin{theorem}
\label{ThmIndexPairing}
The iterated Mayer-Vietoris map \eqref{VerticalComposition} is equal to the index pairing with the pullback of the canonical generator $\xi_n$ of $K^{n-1}(S^{n-1}) \cong \Z$ along the map \eqref{CoronaMap},
\[
\Tr(\partial_{\mathrm{MV}}^n(x)) = \langle x, \varphi^*\xi_n\rangle, \rlap{\qquad $\forall x \in K_n^{\mathrm{top}}(C^*(M)^+)$.}
\]
\end{theorem}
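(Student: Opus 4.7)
The plan is to prove the identity by a ladder of commuting squares that peels off one half space at a time. On the topological side, I use the observation that the generator $\xi_n \in K^{n-1}(S^{n-1})$ admits a recursive construction: decomposing $S^{k} = D_+^{k} \cup D_-^{k}$ with intersection $S^{k-1}$, the Mayer-Vietoris boundary $K^{k-1}(S^{k-1}) \to K^{k}(S^{k})$ sends $\xi_k$ to $\pm \xi_{k+1}$, so $n$ iterations build $\xi_n$ from $\xi_0 = 1 \in K^0(\mathrm{pt})$.

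On the geometric side, for each $0 \leq k \leq n$, define $N_k := \{z \in N \mid \chi_{k+1}(z) = \cdots = \chi_n(z) = 0\}$, which is mapped by $\varphi$ to the coordinate subsphere $S^{k-1} \subset S^{n-1}$ cut out by $x_{k+1} = \cdots = x_n = 0$; write $\varphi_k := \varphi|_{N_k}$. The big family $\cX_k$ corresponds to this subcorona, and the Mayer-Vietoris decomposition $\cX_k = (\{X_k\} \Cap \cX_k) \Cup (\{X_k^c\} \Cap \cX_k)$, with intersection $\cX_{k-1}$, is matched under $\varphi$ with the hemisphere decomposition of $S^{k-1}$ along $S^{k-2}$. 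The core of the proof is to show that for each $1 \leq k \leq n$ the square
\begin{equation*}
\begin{tikzcd}[column sep=huge]
K^{\mathrm{top}}_{k}(C^*(\cX_k)^+) \ar[r, "\langle -{,}\, \varphi_k^* \xi_k\rangle"] \ar[d, "\partial_{\mathrm{MV}}"'] & \Z \ar[d, equal] \\
K^{\mathrm{top}}_{k-1}(C^*(\cX_{k-1})^+) \ar[r, "\langle -{,}\, \varphi_{k-1}^* \xi_{k-1}\rangle"] & \Z
\end{tikzcd}
\end{equation*}
commutes; concatenating these squares for $k = n, n-1, \dots, 1$ and identifying the $k=0$ row as the trace on $\sK$ yields the theorem.

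Commutativity of each square will reduce to naturality of the external product under boundary maps in topological $K$-theory, applied to the Roe-algebra Mayer-Vietoris extension and to the analogous corona extension produced by the two hemispheres. The main obstacle will be verifying the precise compatibility of these two extensions: that the $*$-homomorphism $C^*(\cX_k)^+ \otimes C(N_k) \to C^*(\cX_k)^+/C^*(\cX_{k-1})$, defined along the lines of \cite{QiaoRoe} but relative to the ideal $C^*(\cX_{k-1})$ rather than $\sK$, restricts well to the relevant sub-algebras and intertwines the two boundary maps. This is essentially the content of \cite[\S4.7]{BunkeLudewig} and rests on the observation that a function on $\overline{M}$ vanishing on one hemisphere of $N_k$ sends any operator in the complementary localization ideal into $C^*(\cX_{k-1})$ modulo compacts. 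Once this intertwining is in place, the theorem follows by a routine diagram chase down the ladder.
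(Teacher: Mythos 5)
The paper does not actually prove Theorem~\ref{ThmIndexPairing}: it defers entirely to the cited work of Bunke and Ludewig, so there is no in-text proof to compare against line by line. Your ladder of commuting squares peeling off one half space at a time is the natural way one would expect such a result to be established, and your observation that the Mayer--Vietoris decomposition of $\cX_k$ into $\{X_k\}\Cap \cX_k$ and $\{X_k^c\}\Cap\cX_k$ should match the hemisphere decomposition of $S^{k-1}$ under $\varphi_k$ is exactly the right geometric intuition.

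There is, however, a genuine gap in the base case of your ladder. Your definition gives $N_0 = \{z \in N : \chi_1(z) = \cdots = \chi_n(z) = 0\}$. But on $N$ the identity $\chi_1^2 + \cdots + \chi_n^2 = 1$ holds (because $\chi_1^2 + \cdots + \chi_n^2 - 1 \in C_0(M)$ by coarse transversality), so $N_0 = \emptyset$ and $\varphi_0^*\xi_0 \in K^{-1}(N_0) = 0$. The bottom row of your $k=1$ square is therefore literally the zero pairing, not the trace on $\sK$, and your fix of ``identifying the $k=0$ row as the trace'' is not available within your own framework: the trace on $K_0^{\mathrm{top}}(\sK)$ is not a pairing with any element of $K^{-1}(\emptyset)$. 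To repair this, the $k=1$ square must be formulated with its bottom horizontal arrow being the trace isomorphism $K_0^{\mathrm{top}}(\sK) \to \Z$ directly, and a separate argument is needed to show that this square commutes; this is a non-trivial base case, not a triviality.

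Beyond the base case, you correctly identify the crux of the matter, namely establishing the precise compatibility between the Roe-algebra Mayer--Vietoris extension and the corresponding corona extension, but you do not resolve it, nor do you justify that a pairing of Qiao--Roe type is even defined on $K_k^{\mathrm{top}}(C^*(\cX_k)^+)$ with target $\Z$ for the big families $\cX_k$ (this requires identifying the corona of $\cX_k$ with your proposed $N_k$ and constructing the analogous $*$-homomorphism relative to a non-compact ideal). You also wave away a sign ambiguity (``sends $\xi_k$ to $\pm\xi_{k+1}$''); since the theorem asserts an exact equality and not an equality up to sign, these signs must be tracked through all $n$ stages. In summary: your plan captures the correct architecture and correctly localizes the difficulty, but the empty-corona base case is a concrete error and the inductive step is a plan rather than a proof.
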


Rearranging the factor of $(2 \pi i)^m$, one obtains the following result.

\begin{corollary}
\label{CorollaryIndexPairingDiagram}
Let $n \in \N$ and write $n = 2m$ or $n = 2m+1$, depending on whether $n$ is even or odd. 
Then the following diagram commutes:
\begin{equation}
\label{DiagramCorollary}
\begin{tikzcd}
K^{\mathrm{alg}}_{n-2m}(\sB(M)^+)
\ar[r, "\ch"]
\ar[d]
&
HC_n(\sB(M)^+)
\ar[r, "\widetilde{\chi}_*"]
&
HX_n(M)
\ar[dd, "\substack{\text{pairing with} \\~(2 \pi i)^m \theta_n}"]
\\
K^{\mathrm{top}}_{n-2m}(C^*(M)^+)
\ar[d, "\beta^m"']
\\
K^{\mathrm{top}}_{n}(C^*(M)^+)
\ar[r, "\substack{\text{index pairing} \\ \text{with}~\varphi^*\xi_n}"]
&
\Z
\ar[r, hookrightarrow]
&
\C
\end{tikzcd}
\end{equation}
\end{corollary}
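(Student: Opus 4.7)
The strategy is to obtain \eqref{DiagramCorollary} by combining the large commutative diagram from the proof of the Kubo quantization in Section \ref{SectionIntegralityKubo} with Theorem \ref{ThmIndexPairing}. That earlier diagram already computes the top-row composition $\langle (2\pi i)^m\theta_n,\ \widetilde\chi_* \ch(-)\rangle$ as an iterated Mayer-Vietoris boundary followed by a trace, up to precisely the factor $(2\pi i)^m$ which is built into the pairing in \eqref{DiagramCorollary}.

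First, I would extend the diagram from Section \ref{SectionIntegralityKubo} by appending, at each row, the comparison maps \eqref{ComparisonBRoe} from $KH_\bullet(\sB(\cX_k))$ to $K^{\mathrm{top}}_\bullet(C^*(\cX_k)^+)$. Commutativity of the resulting extended diagram has three ingredients: (a) the naturality of the comparison maps with respect to the boundary maps of the long exact sequences attached to the short exact sequences of (closed) ideals, so that $\partial_{\mathrm{MV}}$ in $KH$ and in $K^{\mathrm{top}}$ are intertwined; (b) the identity $C^*(\cX_0)=\sK$ (since $\sB(\cX_0)\subseteq\sL_1$ by Lemma \ref{LemmaTraceClass}, while its closure is the compacts on an ample module) together with the isomorphism $K^{\mathrm{top}}_\bullet(\sL_1)\cong K^{\mathrm{top}}_\bullet(\sK)$ used in \eqref{KtheoryCompactOperators}; and (c) the compatibility \eqref{CompatibilityBottS} between Bott periodicity $\beta$ and the $S$-operator on $HC^{\mathrm{top}}$, which supplies the $(2\pi i)^m$ factor matching the one in the pairing with $(2\pi i)^m\theta_n$.

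Next, I would use that the Bott isomorphism $\beta$ commutes with the boundary map in topological $K$-theory (in particular with each $\partial_{\mathrm{MV}}$). This allows us to push all factors of $\beta^m$ past the iterated Mayer-Vietoris, so that going through $K^{\mathrm{top}}_{n-2m}(C^*(M)^+)\xrightarrow{\beta^m}K^{\mathrm{top}}_n(C^*(M)^+)\xrightarrow{\partial_{\mathrm{MV}}^n}K^{\mathrm{top}}_0(\sK)=\Z$ equals (up to the sign/normalization already absorbed in the definition of the pairings) the topological column of the extended Section \ref{SectionIntegralityKubo} diagram. Tracing through, the top-row composition of \eqref{DiagramCorollary} then equals $\Tr\circ \partial_{\mathrm{MV}}^n$ applied to the image of $x\in K^{\mathrm{alg}}_{n-2m}(\sB(M)^+)$ in $K^{\mathrm{top}}_n(C^*(M)^+)$ obtained via $\beta^m$ and the comparison map.

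Finally, Theorem \ref{ThmIndexPairing} identifies $\Tr\circ \partial_{\mathrm{MV}}^n$ on $K^{\mathrm{top}}_n(C^*(M)^+)$ with the index pairing against $\varphi^*\xi_n\in K^{n-1}(N)$, completing the commutativity of \eqref{DiagramCorollary}. The main subtlety, and what I expect to be the most delicate step in the bookkeeping, is (c) above: properly tracking the $(2\pi i)^m$ arising from the discrepancy between $\beta$ and $S$ under the topological Chern character, and making sure it cancels the explicit $(2\pi i)^m$ in the pairing with $\theta_n$ so that the right-hand side of \eqref{DiagramCorollary} lands in $\Z\subseteq\C$ as stated.
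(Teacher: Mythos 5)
Your proposal is correct and follows essentially the same route as the paper: the corollary is obtained by pasting the large diagram of Section \ref{SectionIntegralityKubo}, extended by the comparison maps \eqref{ComparisonBRoe} into the topological Roe algebra $K$-theory (this is precisely the extended diagram displayed at the start of Section \ref{SectionDualIndexTheorem}), using that Bott periodicity commutes with $\partial_{\mathrm{MV}}$, and then invoking Theorem \ref{ThmIndexPairing} to identify the iterated Mayer–Vietoris boundary with the index pairing against $\varphi^*\xi_n$; multiplying through by $(2\pi i)^m$ then yields \eqref{DiagramCorollary}.
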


We are now able to prove the following theorem.

\begin{theorem}
\label{ThmNontrivialPairing}
For $M = \R^n$ and the standard half-spaces $X_1^{\mathrm{std}}, \dots, X_n^{\mathrm{std}}$ from Example~\ref{ExampleEuclideanHalfSpaces}, the Kitaev and Kubo pairings map onto the subgroup of $\C$ given in Thm.~\ref{MainQuantizationThm}.
\end{theorem}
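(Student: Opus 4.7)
The strategy is to invoke Corollary \ref{CorollaryIndexPairingDiagram}, which expresses the Kubo pairing (up to the factor $(2\pi i)^m$) as the composite
\[
K^{\mathrm{alg}}_{n-2m}(\sB(\R^n)^+) \longrightarrow K^{\mathrm{top}}_n(C^*(\R^n)^+) \xrightarrow{\langle\cdot,\varphi^*\xi_n\rangle} \Z.
\]
Surjectivity of the Kubo pairing onto $(2\pi i)^{-m} \cdot \Z$ thus reduces to producing an algebraic $K$-theory class whose image is $\pm 1$. The Kitaev case follows via Corollary \ref{CorEquipartition}, which relates Kitaev and Kubo pairings by the combinatorial factor $n!$.

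First I would identify the corona and the pulled-back generator. For the standard half spaces on $\R^n$ one has $\tilde\chi_i(x) = x_i$, so the normalized switch functions $\chi_i(x) = x_i/\sqrt{1 + |x|^2}$ together with $C_0(\R^n)$ generate the $C^*$-algebra of continuous functions on the radial compactification $\R^n \cup S^{n-1}$. Hence $N \cong S^{n-1}$ canonically, the map $\varphi$ is the identity, and $\varphi^*\xi_n = \xi_n$ generates $K^{n-1}(S^{n-1}) \cong \Z$. Via Paschke duality (or the standard computation of the Roe algebra of Euclidean space) one has $K^{\mathrm{top}}_n(C^*(\R^n)^+) \cong \Z$, and the index pairing against $\xi_n$ implements this isomorphism.

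Next I would exhibit an algebraic lift by induction on $n$. The base case $n = 1$ is explicit: working on $\cH = \ell^2(\Z)$, the unit shift $U$ is a finite-propagation unitary in $\GL_1(\sB(\Z))^+$, and \eqref{eqn:Kitaev_1D_formula} gives $[U; X_1^{\mathrm{std}}] = -1$. For $n \geq 2$, I would use the external product in algebraic $K$-theory applied to the canonical map $\sB(\R^{n-1})^+ \otimes \sB(\R)^+ \to \sB(\R^n)^+$ arising from the tensor product of the underlying $M$-modules, together with naturality of the diagram of Corollary \ref{CorollaryIndexPairingDiagram} under exterior products and the compatibility of the K\"unneth decomposition $\xi_n = \xi_{n-1} \boxtimes \xi_1$ with the identifications of the corona maps. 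This multiplies the pairings factorwise, yielding $\pm 1$.

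The principal obstacle lies in the even-degree case, since the external product of two odd-degree unitary classes represents a class in $K_0^{\mathrm{alg}}$ only after a Bott-type Cayley construction, and a priori this need not preserve finite propagation. A clean workaround is to discretize: on $\cH = \ell^2(\Z^n)$, the even Bott element can be written directly as an idempotent matrix with entries that are finitely-banded translation operators (using e.g.\ a Toeplitz-type realization of the mapping cone relating shifts in different coordinate directions), yielding an explicit representative in $M_k(\sB(\Z^n)^+)$. Coarse invariance (Remark \ref{RemAmpleModule}) then transfers the class to the $\R^n$ setting, and the product rule above completes the induction.
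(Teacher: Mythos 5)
Your overall framework---invoking Corollary \ref{CorollaryIndexPairingDiagram} to reduce surjectivity to exhibiting an algebraic $K$-theory class in $K_{n-2m}^{\mathrm{alg}}(\sB(\R^n)^+)$ that maps to the generator under the index pairing---matches the paper. Your identification of the corona as $S^{n-1}$ with $\varphi$ the identity, and the observation that the index pairing implements an isomorphism, also agree (the paper derives the latter from flasqueness of the half spaces and iterated Mayer--Vietoris boundaries). The base case $n=1$ via the shift on $\ell^2(\Z)$ is correct and elementary.

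The gap is in your inductive step, specifically at the even stage. You acknowledge that the odd $\times$ odd $\to$ even external product requires a Bott-type construction that a priori need not preserve finite propagation, and your proposed workaround---that ``the even Bott element can be written directly as an idempotent matrix with entries that are finitely-banded translation operators (using e.g.\ a Toeplitz-type realization)''---is asserted, not proved, and is in fact the heart of the matter. There is no obvious candidate: the physics prototype (Landau level projection) has only Gaussian decay, not finite propagation, and Theorem \ref{ThmFPKitaevVanishes} forces any witness $P$ to lie in $M_k(\sB(\Z^n)^+)$ with a nontrivial scalar part, so the class lives in the nonunital $K_0^{\mathrm{alg}}(\sB(\Z^n))$ rather than arising from an idempotent over $\sB(\Z^n)$ itself. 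Exhibiting such a representative by hand requires genuine work. Since the gap occurs already at $n=2$ and your odd-dimensional inductive steps rely on the even ones, it propagates to all $n \geq 2$.

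The paper sidesteps this by citing Roe's construction: the coarse index of the Dirac operator generates $K_n^{\mathrm{top}}(C^*(\R^n))$, and Roe exhibits explicit finite-propagation lifts $\ind_{\mathrm{fp}}(D)$ to algebraic $K$-theory, in $K_0^{\mathrm{alg}}(\sB(M))$ when $n$ is even (\cite[\S4.3]{Roe-cohom}), and as $[w(D)]$ with $\hat w$ compactly supported when $n$ is odd (\cite[Prop.~3.6]{Roe-topology}). This avoids any external-product machinery in algebraic $K$-theory and, crucially, supplies exactly the finite-propagation even representative your argument lacks. If you want to salvage the inductive approach, the right move would be to replace your Toeplitz ansatz with Roe's finite-propagation chopping construction; at that point, though, you may as well use the Dirac generator directly for all $n$ as the paper does.
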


\begin{proof}
Write $n = 2m$ or $n=2m+1$, depending on whether $n$ is even or odd.
Collecting all constants, observe that we need to show that the image of the composition
\[
K_{n-2m}^{\mathrm{alg}}(\sB(\R^n)^+)
\xrightarrow{~\ch~} HC_n(\sB(\R^n)^+) \xrightarrow{~\tilde{\chi}_*~} HX_n(\R^n) \xrightarrow{~\substack{\text{pairing with} \\ (2 \pi i)^m \theta_n}~} \C
\]
equals $\Z \subset \C$.
By the commutative diagram \eqref{DiagramCorollary}, the image of this map is \emph{contained in} $\Z$, and the map itself is equal to the composition
\begin{equation}
\label{SecondComposition}
K^{\mathrm{alg}}_{n-2m}(\sB(\R^n)^+)
\longrightarrow
 K^{\mathrm{top}}_{n-2m}(C^*(\R^n)^+) \xrightarrow{~\beta^m~}
K^{\mathrm{top}}_n(C^*(\R^n)^+)
\xrightarrow{~\substack{\text{index pairing} \\ \text{with}~\varphi^*\xi_n}~}\Z.
\end{equation}
It remains to show that this map is onto.
Since each of the half-spaces $X_i^{\mathrm{std}}$ is flasque, their Roe algebra $K$-theory groups are trivial  \cite[Prop.~9.3]{Roe-topology} and so each of the Mayer-Vietoris boundary maps
\[
\partial_{\mathrm{MV}} : K_i^{\mathrm{top}}(C^*(\cX_k)) \longrightarrow K_{i-1}^{\mathrm{top}}(C^*(\cX_{k-1}))
\]
is an isomorphism.
In view of Thm.~\ref{ThmIndexPairing}, we conclude that the index pairing with $\varphi^*\xi_n$ is an isomorphism in this case.
Since $C^*(\cX_0) = \sK$, the algebra of compact operators on $\cH$ (whose $K$-theory is given in \eqref{KtheoryCompactOperators}), this implies 
\[
K_i^{\mathrm{top}}(C^*(\R^n)) = 
\begin{cases} 
\Z & i - n\equiv 0 \mod 2
\\
0 & i - n \equiv 1 \mod 2,
\end{cases}
\]
compare also \cite[Thm.~6.4.10]{HigsonRoe}.
We conclude that the composition of the last two maps in \eqref{SecondComposition} is actually an isomorphism (in particular, onto).
It remains to show that the first map in \eqref{SecondComposition} is onto.
This statement hinges on the well-known fact that the non-trivial $K$-theory groups of $C^*(\R^n)$ are generated by the \emph{coarse index class} $\ind(D)$ of the Dirac operator $D$ on $\R^n$ \cite[pp.~33--34]{Roe-topology}. 
%

The coarse index of the Dirac operator may be defined if $M$ is a general complete spin manifold, see for example \cite{Roe-cohom}. 
As shown by Roe in \cite[\S4.3]{Roe-cohom}, if $n=\dim(M)$ is even, then there exists a lift $\ind_{\mathrm{fp}}(D) \in K^{\mathrm{alg}}_0(\sB(M))$ of $\ind(D)$ along the comparison map \eqref{ComparisonBRoe}.
We conclude that the first map in \eqref{SecondComposition} is onto if $n$ is even.

In the case that $n = \dim(M)$ is odd, Roe constructs in \cite[\S4.6]{Roe-cohom} a lift of $\ind(D)$ to $K_{-1}^{\mathrm{alg}}(\sB(M))$, which does not quite fit our setting.
However,  the $C^*$-algebraic coarse index may be defined by the formula
\[
\ind(D) := [w(D)]\in K_1^{\mathrm{top}}(C^*(M)),
\]
for any function $w:\R\to\C^\times$ with $w-1\in C_0(\R)$ and winding number one (of course, any choice of $w$ yields the same $K$-theory class).
Choosing $w$ with compactly supported Fourier transform, the operator $w(D)$ actually defines an invertible element in $\sB(M)^+$ \cite[Prop.~3.6]{Roe-topology}, \cite[Lemma~4.30]{Roe-cohom}, and hence defines a class $\ind_{\mathrm{fp}}(D)$ in $K_1^{\mathrm{alg}}(\sB(M))$ that gets sent to $\ind(D)$ under the comparison map.
This shows that the relevant map is onto $\Z$ in the odd case as well.
%
%
\end{proof}

\begin{remark}\label{RemarkTranslationInvariantVanish}
The class $\ind_{\mathrm{fp}}(D)\in K^{\mathrm{alg}}_0(\sB(M))$ constructed in \cite{Roe-cohom} is represented by idempotents in $M_2(\sB(M)^+)$, where $\sB(M)^+$ denotes the unitization of $\sB(M)$. 
So the non-trivial Kitaev/Kubo pairings there are achieved with the help of the unitization. 

One might ask the question: \emph{If $P=P^2\in M_k(\sB(M))$ is an idempotent over the non-unitized algebra $\sB(M)$, is it possible that $P$ has non-trivial Kubo/Kitaev pairings?} 

For $M=\Z^d$, it is easy to see that this is impossible if $P$ is translation invariant.
We take the $M$-module $\cH = \ell^2(\Z^d)\otimes \C^N$ and let $P$ be an idempotent in the $\Z^d$-invariant subalgebra
\[
\sB(\Z^d)^{\Z^d}\cong \C[\Z^d]\otimes M_N(\C).
\]
Here, $\C[\Z^d]$ denotes the group ring of $\Z^d$, or equivalently, the ring of Laurent polynomials in $d$ variables over $\C$,
\[
\C[\Z^d]=\C[z_1^{\pm1},\ldots,z_d^{\pm1}].
\]
So $P$ represents a class $[P]\in K^{\mathrm{alg}}_0(\sB(\Z^d)^{\Z^d}) = K^{\mathrm{alg}}_0(\C[z_1^{\pm},\ldots,z_d^{\pm}])$. 
Consider the ring homomorphism
\[
\C[\Z^d] \cong \C[z_1^{\pm1}, \dots, z_d^{\pm1}] \longrightarrow \C
\]
given by sending each generator $z_i^{\pm 1}$ to $1 \in \C$.
Observe that the element $P_0 := 1 \otimes e \in \C[\Z^d] \otimes M_N(\C)$, where $e$ is a rank one projection, defines a class in $K_0^{\mathrm{alg}}(\C[\Z^d])$, which is sent to a generator of $K_0^{\mathrm{alg}}(\C) \cong \Z$ under the above map.
On the other hand, it is a standard result that the above map induces an isomorphism on $K_0^{\mathrm{alg}}$ \cite[Corollary 3.2.13]{RosenbergK}, hence $P_0$ is also a generator for $K^{\mathrm{alg}}_0(\C[\Z^d])$. 
We conclude that $[P] = n \cdot [P_0]$ for some $n \in \Z$ and this relation continues to hold in $K^{\mathrm{alg}}_0(\sB(\Z^d))$.
However, $P_0$ has zero propagation, so $P_0A=AP_0A$ for any subset $A\subseteq M$, and consequently, the Kitaev pairings for $P_0$ are trivial.
As Kitaev pairings depend only on $K_0$ classes, it follows that all Kitaev pairings of $P$ must also be trivial.

We mention that similar arguments employing $K^{\mathrm{alg}}_0$ of Laurent polynomial rings were made in \cite{CMST} to show the impossibility of flat and topologically non-trivial energy bands in periodic tight-binding models with strictly finite hopping range.
\end{remark}
\section*{Data availability statement}
Data sharing not applicable to this article as no datasets were generated or analyzed during the current study.

\bibliography{bibinfo}

\end{document}